\documentclass[12pt]{article}
\usepackage[cp1251]{inputenc}
\usepackage[english]{babel}
\usepackage{amsmath}
\usepackage{mathrsfs}
\usepackage{amsfonts,amssymb}
\usepackage{xkeyval,ifthen}
\usepackage{delarray}
\newtheorem{theorem}{Theorem}
\newtheorem{lemma}{Lemma}
\begin{document}
\title{\textbf{Steady-state oscillations
      of a half-cylinder with a pseudo isotropic structure}}
\author{\textbf{Mario Argueta Garcia}\\
        Academia de matematicas SLT\\
        Colegio de Ciencia y Tecnologia\\
        Universidad Autonoma de la Ciudad de Mexico\\
        C.P. 09790, Mexico D. F.\\
        mario.argueta@uacm.edu.mx}
\date{}
\maketitle
\begin{abstract}
    In this paper we investigate the spectral properties of certain
    self-adjoint quadratic pencils, occurring at the separation of
    the variables in the problem of the oscillation of a
    semi-infinite cylinder with an isotropic elastic structure, when the Lame
    constants are $\mu>0$ and $\lambda=0$ or $\mu>0$ and $\lambda=\infty$.
\end{abstract}

\textbf{\normalsize  1.~Spectral problems
    $\boldsymbol{\mathcal{L}_\omega(\alpha)}$ and
    $\boldsymbol{\mathcal{L}_\omega^0(\alpha)}$ in
    the case of an isotropic structure}

\bigskip
\textbf{1.1~Statement of model problems}\\

    We consider a system of equation of small oscillation of an
    elastic medium in the semi-infinite cylinder $\Omega$, where
    $\Omega=\textbf{R}_{x_1}^+\times\mathcal{D}\subset\textbf{R}^3$;
    where $\mathcal{D}\subset\textbf{R}^2=\{(0,x_2,x_3)\}$~---
    is a bounded domain with a smooth boundary $\partial\mathcal{D}$,

\begin{equation} \label{n1}
   \sum_{k=1}^3 \frac{\partial\sigma_{ik}(u)}{\partial
   x_k}=\rho\frac{\partial^2u_i}{\partial t^2},\qquad i=1,2,3,
\end{equation}\\
   where $u=u(t,x_1,x_2,x_3)=(u_1,u_2,u_3)$~--- is the displacement vector,
   $\sigma=(\sigma_{ik})^3_{i,k}$ --- is the stress tensor,
   $\rho=\rho(x_2,x_3)$~--- is the density of the medium, which is independent of $x_1$
   and \,$0<m<\rho(x)<M$. On the lateral surface
   $\Gamma=\partial\mathcal{D}\times\textbf{R}^+_{x_1}$, of the semi-infinite cylinder
   $\Omega$, the following boundary condition holds:
\begin{align}
   & \left. {u} \right|_\Gamma=0 \label{n2},\\
   \intertext{or}
   & \left. {\sigma(u)\cdot n} \right|_\Gamma=0 \label{n3},
\end{align}
   where $n=n(x_2,x_3)=(0,n_2,n_3),\,n_2^2+n_3^2=1$~--- is the exterior normal
   to $\Gamma$. The first condition corresponds to a fixed boundary,
   and the second, to a free boundary, see \cite{JLLyEM72}.
   Separating the variables
   $u(t,x_1,x_2,x_3)=e^{i\omega t}u(x_1,x_2,x_3),\,\omega\in\textbf{R}$,
   and substituting this expression in (1)--(2), we obtain equation
   of the stead-state oscillation of the semi-infinite cylinder $\Omega$,
   see \cite{TANSAA77}:
\begin{equation}\label{n4}
   \sum_{k=1}^3 \frac{\partial\sigma_{ik}(u)}{\partial
   x_k}+\rho\omega^2u_i=0,\qquad i=1,2,3,
\end{equation}
   whit the condition on the lateral surface:
\begin{align}
   & \left. {u} \right|_\Gamma=0 \label{n5},\\
\intertext{or}
   & \left. {\sigma(u)\cdot n} \right|_\Gamma=0 \label{n6}.
\end{align}
\par\medskip
   We also assume that, in addition to the conditions (\ref{n5}) and (\ref{n6}),
   on the base of semi-cylinder $\Omega$ there are given definite initial data.
   Very often one gives condition of the form:
\begin{align}
   & \left. {u} \right|_\mathcal{D}=\varphi \label{n7},\\
\intertext{or}
   & \left. {\sigma(u)\cdot n} \right|_\mathcal{D}=\psi \label{n8},
\end{align}\\
   where $n=(-1,0,0)$~--- is a normal to $\mathcal{D}$, while  $\varphi$ and $\psi$~---
   are functions from a definite class of smoothness. At the infinity, we set the condition:
\begin{align}
   u(\infty,x_2,x_3)=0.\label{n9}
\end{align}
   While examining the elastic properties of a medium , we deal with
   the relation between the components of the corresponding stress
   and strain tensors. The components of the tensor stress can be
   obtained by the differentiating the free energy (elastic energy)
   $W$ of the system, with respect to the strain tensor components;
   i.e.,
\begin{equation}\label{n10}
   \sigma_{ik}(u)=\frac{\partial W}{\partial e_{ik}}\, , \qquad
   i,k=1,2,3,
\end{equation}

   where

\begin{equation}\label{n11}
   e_{ik}=\frac{1}{2}\left(\frac{\partial u_i}{\partial
   x_k}+\frac{\partial u_k}{\partial x_i}\right). \qquad
   i,k=1,2,3,
\end{equation}
\par\medskip
   In the case of isotropic elastic medium (isotropic structure),
   the free energy is given by:
\begin{equation}\label{n12}
   W=\frac{1}{2}\lambda(e_{11}+e_{22}+e_{33})^2+\mu(e^2_{11}+
   e^2_{22}+e^2_{33}+e^2_{12}+e^2_{13}+e^2_{23}),
\smallskip
\end{equation}
  (where $\lambda$ and $\mu$ are Lame constants of the isotropic structure, see \cite{LLDyLEM87,BLMyGVV82}).

  We have some spacial cases for the isotropic structure, when Lame
  constants are $\mu>0$ and $\lambda=0$ or $\mu>0$ and $\lambda=\infty$
  (the case, when $\lambda >0$ and $\mu >0$, see \cite{KAGyOMB81}).

  We examine the first case for the isotropic structure, when the Lame
  constants are $\mu>0$ and $\lambda=0$, see \cite{SIS83,ALP,WHM40}.
  Then we arrive at the following free energy $W$ of the system:
\begin{equation}\label{n13}
   W=\mu(e^2_{11}+e^2_{22}+e^2_{33}+e^2_{12}+e^2_{13}+e^2_{23}).
\end{equation}

   We find the relations (10), when $\lambda =0$. Then
\[
\begin{matrix}
    \sigma_{11}=2\mu e_{11},&\qquad \sigma_{12}=2\mu e_{12},&\qquad
    \sigma_{13}=2\mu e_{13},\\
    \sigma_{21}=2\mu e_{21},&\qquad \sigma_{22}=2\mu e_{22},&\qquad
    \sigma_{23}=2\mu e_{23},\\
    \sigma_{31}=2\mu e_{31},&\qquad \sigma_{32}=2\mu e_{32},&\qquad
    \sigma_{33}=2\mu e_{33}.
\end{matrix}
\]
  Now we can find the relations (\ref{n4}) in this case. Then
\[
    \frac{\partial \sigma_{11}}{\partial
    x_1}=2\mu\frac{\partial^2u_1}{\partial x^2_1},\quad \frac{\partial
    \sigma_{12}}{\partial x_2}=\mu\frac{\partial^2u_1}{\partial x^2_2}
    +\mu \frac{\partial^2u_2}{\partial x_2 \partial x_1},\quad
    \frac{\partial \sigma_{13}}{\partial x_3}=\mu\frac{\partial^2u_1}
    {\partial x^2_3}+\mu \frac{\partial^2u_3}{\partial x_3 \partial x_1},
\]

\[
    \frac{\partial \sigma_{21}}{\partial x_1}=\mu\frac{\partial^2u_2}
    {\partial x^2_1}+\mu \frac{\partial^2u_1}{\partial x_2 \partial
    x_1},\quad \frac{\partial \sigma_{22}}{\partial x_2}=2\mu
    \frac{\partial^2u_2}{\partial x^2_2},\quad \frac{\partial \sigma_{23}}
    {\partial x_3}=\mu\frac{\partial^2u_2} {\partial x^2_3}+\mu
    \frac{\partial^2u_3}{\partial x_2 \partial x_3},
\]

\[
    \frac{\partial \sigma_{31}}{\partial x_1}=\mu\frac{\partial^2u_3}
    {\partial x^2_1}+\mu \frac{\partial^2u_1}{\partial x_3 \partial
    x_1},\quad \frac{\partial \sigma_{32}}{\partial x_2}=\mu
    \frac{\partial^2u_3}{\partial x^2_2}+\mu \frac{\partial^2u_2}
    {\partial x_2 \partial x_3},\quad \frac{\partial \sigma_{33}}{\partial x_3}
    =2\mu\frac{\partial^2u_3}{\partial x^2_3}.
\]
  In that way
\[
    \frac{\partial \sigma_{11}}{\partial x_1}+\frac{\partial
    \sigma_{12}}{\partial x_2}+\frac{\partial \sigma_{13}}{\partial
    x_3}+\rho w^2u_1=2\mu\frac{\partial^2u_1}{\partial x^2_1}+
    \mu \frac{\partial^2u_2}{\partial x_2 \partial x_1}+
    \mu\frac{\partial^2u_3}{\partial x_3 \partial x_1}+
    \mu\frac{\partial^2u_1}{\partial x^2_2}+
    \mu\frac{\partial^2u_1}{\partial x^2_3}+\rho w^2u_1=0,
\]

\[
    \frac{\partial \sigma_{21}}{\partial x_1}+\frac{\partial
    \sigma_{22}}{\partial x_2}+\frac{\partial \sigma_{23}}{\partial
    x_3}+\rho w^2u_2=\mu\frac{\partial^2u_2}{\partial x^2_1}+
    \mu \frac{\partial^2u_1}{\partial x_2 \partial x_1}+
    2\mu\frac{\partial^2u_2}{\partial x^2_2}+
    \mu\frac{\partial^2u_2}{\partial x^2_3}+
    \mu\frac{\partial^2u_3}{\partial x_2 \partial x_3}+\rho w^2u_2=0,
\]

\[
    \frac{\partial \sigma_{31}}{\partial x_1}+\frac{\partial
    \sigma_{32}}{\partial x_2}+\frac{\partial \sigma_{33}}{\partial
    x_3}+\rho w^2u_3=\mu\frac{\partial^2u_3}{\partial x^2_1}+
    \mu \frac{\partial^2u_1}{\partial x_3 \partial x_1}+ \mu
    \frac{\partial^2u_2}{\partial x_2 \partial x_3}+\mu\frac{\partial^2u_3}
    {\partial x^2_2}+ 2\mu\frac{\partial^2u_3}{\partial x^2_3}+
    \rho w^2u_3=0.
\]
\par\medskip
   In what follows, it is convenient to set $x_1=y$. After
   substitution of these expressions into equation (\ref{n4}) it takes the form:
\begin{equation}\label{n14}
   -\mathcal{A}\frac{d^2u}{dy^2}+i\mathcal{B}\frac{du}{dy}+
   (\mathcal{C}-\omega^2\mathcal{R})u=0, \quad (y=x_1>0),
\end{equation}
   where $u=(u_1,u_2,u_3)$ and \\
\[
\mathcal{A}=\mu
\begin{array} \lgroup{ccc}\rgroup
   {\! \! \!} 2     &   0  &  0 {\! \! \!}  \\
   {\! \! \!} 0     &   1  &  0  {\! \! \!}  \\
   {\! \! \!} 0     &   0  &  1 {\! \! \!}
\end{array}\!,
\quad \mathcal{B}=i\mu
\begin{array} \lgroup{ccc}\rgroup
   {\! \! \!} 0       &     D_2     &    D_3 {\! \! \!} \\
   {\! \! \!} D_2     &      0      &      0 {\! \! \!}   \\
   {\! \! \!} D_3     &      0      &      0 {\! \! \!}
\end{array}\!,
\quad \mathcal{R}=
\begin{array} \lgroup{ccc}\rgroup
    {\! \! \!}    \rho  &    0    &    0  {\! \! \!}   \\
    {\! \! \!}       0  &  \rho   &    0  {\! \! \!}   \\
    {\! \! \!}       0  &    0    &  \rho {\! \! \!}
\end{array}\!,
\]
\bigskip
\[
\mathcal{C}=-\mu
\begin{array} \lgroup{ccc}\rgroup
   {\! \! \!} D^2_2+ D^2_3   &           0       &          0            \\
         0                   &     2D^2_2+D^2_3  &       D_2D_3            \\
         0                   &        D_2D_3     &    D^2_2+2D^2_3 {\! \! \!}
\end{array}\!,
\]
\medskip
\[
   D_k=\frac{\partial}{\partial x_k},\quad
   \text{where}\quad k=2,3.
\]
  Now we can find the form of equation (\ref{n6}). Then
\[
   \sigma(u) \cdot n=(\sigma_{12}n_2+\sigma_{13}n_3,\,\sigma_{22}n_2+
   \sigma_{23}n_3,\,\sigma_{32}n_2+\sigma_{33}n_3)=
\]
\[
   =(\mu n_2\frac{\partial u_1}{\partial x_2}+\mu n_3\frac{\partial
   u_1}{\partial x_3}+\mu n_2\frac{\partial u_2}{\partial x_1}+
   \mu n_3\frac{\partial u_3}{\partial x_1}\,,\,2\mu n_2\frac{\partial u_2}
   {\partial x_2}+\mu n_3\frac{\partial u_2}{\partial x_3}+\mu
   n_3\frac{\partial u_3}{\partial x_2}\,,
\]

\[
   \mu n_2\frac{\partial u_2}{\partial x_3}+\mu n_2\frac{\partial
   u_3}{\partial x_2}+2\mu n_3\frac{\partial u_3}{\partial x_3}).
\]\\
   Equation (\ref{n6}) can be written as:
\begin{equation} \label{n15}
   (\mathcal{M}u-i\mathcal{N}\frac{du}{dy})_\Gamma=0,
\end{equation}
  where
\[
\mathcal{M}=\mu
\begin{array} \lgroup{ccc}\rgroup
  {\! \! \!} n_2D_2+n_3D_3    &           0        &       0             \\
                   0          &   2n_2D_2+n_3D_3   &    n_3D_2            \\
                   0          &        n_2D_3      & n_2D_2+2n_3D_3 {\! \! \!}
\end{array}\!,
\]
\medskip
\[
\mathcal{N}=\mu
\begin{array} \lgroup{ccc}\rgroup
  {\! \! \!} 0 &    n_2    &    n_3 {\! \! \!} \\
  {\! \! \!} 0 &     0     &   0    {\! \! \!}   \\
  {\! \! \!} 0 &     0     &   0    {\! \! \!}
\end{array}\!.
\]

\medskip
   Functions $u$ satisfying equations (\ref{n14})--(\ref{n15})
   and having the representation:
\begin{equation} \label{n16}
   u=e^{i\alpha y}v(x_2,x_3),
\end{equation}
   are called proper oscillations of semi-cylinder \, $\Omega$ \,
   with a free boundary $\Gamma$. The number $\alpha$, is called a wave number or
   eigenvalue, and the function $v$~--- is called the amplitude or eigenfunction.
   Substituting into equations (\ref{n14})--(\ref{n15}) a function
   of the form (\ref{n16}), we find that the problem of steady-state oscillations
   of the semi-infinite cylinder with an isotropic structure and whit a free
   boundary, when the Lame constants are $\mu>0$ and $\lambda=0$,
   there corresponds the following spectral problem (eigenvalue problem)
   with the parameter $\alpha$:

\begin{equation} \label{n17}
   \mathcal{C}_\omega (\alpha)v \equiv(\alpha^2\mathcal{A}-
   \alpha\mathcal{B}+\mathcal{C}-\omega^2\mathcal{R})v=0,
\end{equation}

\begin{equation} \label{n18}
   \left. {\mathcal{U}(\alpha)v} \right|_{\partial\mathcal{D}}\equiv
   \left. {(\mathcal{M}+i\alpha\mathcal{N})v} \right|_{\partial
   \mathcal{D}}=0.
\end{equation}\\
   The problem (\ref{n17})--(\ref{n18}) we denoted by $\mathcal{L}_\omega
   (\alpha)v$. Therefore, the equality  $\mathcal{L}_\omega(\alpha)v=f$ means that
   $\mathcal{C}_\omega (\alpha)v=f$ and $\left. {\mathcal{U}(\alpha)v}\right|_{\partial\mathcal{D}}=0$.
   A nonzero vector function $y_0(x_2,x_3)=(y_{01},y_{02},y_{03})$
   is called a eigenvector of the spectral problem $\mathcal{L}_\omega(\alpha)$,
   corresponding to eigenvalue $\alpha_0$, if $\mathcal{L}_\omega(\alpha_0)y_0=0$. The system
   $y_0,\dots, y_p$ is called the chain of eigen-and associated vectors, corresponding
   to the eigenvalue $ \alpha_0$, if
\[
   \mathcal{L}_\omega(\alpha_0)y_l+\frac{\partial}{\partial\alpha}\mathcal{L}_\omega(\alpha_0)
   y_{l-1}+\frac{1}{2}\frac{\partial^2}{\partial\alpha^2}\mathcal{L}_\omega(\alpha_0)y_{l-2}=0,
   \, y_{-1}=y_{-2}=0,\, l=0,\dots, p,
\]
  in a more detailed manner,
\begin{equation} \label{n19}
   \mathcal{C}_\omega(\alpha_0)y_l+\frac{\partial}{\partial\alpha}\mathcal{C}_\omega(\alpha_0)
   y_{l-1}+\frac{1}{2}\frac{\partial^2}{\partial\alpha^2}\mathcal{C}_\omega(\alpha_0)y_{l-2}=0
   \quad \text{in} \quad \mathcal{D},
\end{equation}
\begin{equation} \label{n20}
   \mathcal{U}(\alpha_0)y_l+\frac{\partial}{\partial\alpha}\mathcal{U}(\alpha_0)y_{l-1}=0
   \quad \text{on} \quad \partial\mathcal{D}.
\end{equation}\\
   The elements of a system of eigen-and associated vectors are
   sometimes called roots vectors.

   If $\alpha_0$~--- is an eigenvalue of the problem $\mathcal{L}_\omega(\alpha)$ and $y_0,\dots,y_p$~---
   is some corresponding chain of eigen-and associated vectors, then
   the solution of the problem (\ref{n4}) and (\ref{n6}) of the form:

\[
   u_{\text{e}}(x_1,x_2,x_3)=e^{i\alpha_0x_1}\left(\frac{(ix_1)^p}{p\,!}y_0+\frac{(ix_1)^{p-1}}{(p-1)!}y_1+
   \cdots +y_p \right),
\]\\
   are called elementary solutions and the solutions $u(t,x_1,x_2,x_3)=e^{i\omega t}u$,
   of the problem (\ref{n1}) and (\ref{n3}) are called normal oscillations (or normal waves).

   In the case of the fixed boundary, condition (\ref{n8}) is replaced by:
\begin{align}
   \left. {v} \right|_\Gamma=0 \label{n21},
\end{align}
  we arrive at a spectral problem with the parameter $\alpha$:
\begin{equation} \label{n22}
   \mathcal{C}_\omega (\alpha)v \equiv(\alpha^2\mathcal{A}-
   \alpha\mathcal{B}+\mathcal{C}-\omega^2\mathcal{R})v=0,
\end{equation}
\begin{align}
   \left. {v} \right|_\Gamma=0 \label{n23},
\end{align}
  The spectral problem (\ref{n22})--(\ref{n23}) will be denoted by $\mathcal{L}^0_\omega(\alpha)$.
  The above introduced concepts can be defined also for this problems.

   In this paper, we examine the spectral properties of the problems $\mathcal{L}_\omega(\alpha)$
   and $\mathcal{L}^0_\omega(\alpha)$ for the isotropic structure, when the Lame constants are
   $\mu>0$ and $\lambda=0$. The spectral properties of the problems $\mathcal{L}_\omega(\alpha)$ and
   $\mathcal{L}^0_\omega(\alpha)$ for the isotropic system, when Lame constants are $\mu>0$ and
   $\lambda>0$, were examined by A.~G.~Kostyuchenko and M.~B.~Orazov in the paper
   \cite{KAGyOMB81}.\\

\bigskip

\textbf{\normalsize  2.~Reduction of the spectral problems
   $\boldsymbol{\mathcal{L}_\omega(\alpha)}$ and
   $\boldsymbol{\mathcal{L}_\omega^0(\alpha)}$
   to the pencils $\boldsymbol{L_\omega(\alpha)}$ and $\boldsymbol{L_\omega^0(\alpha)}$}

\bigskip

\textbf{2.1~Reduction of the spectral problem
   $\boldsymbol{\mathcal{L}_\omega(\alpha)}$ to the pencil $\boldsymbol{L_\omega(\alpha)}$}.\\

   By $W^1_2(\mathcal{D})$ we denoted the Sobolev space of the vector functions:

\[
   v(x_2,x_3)=(v_1,v_2,v_3),
\]
   having square summable first generalized derivatives, with the norm:
\[
  \|v\|^2_1=\int\limits_\mathcal{D}\left(|\nabla v_1|^2+|\nabla v_2|^2+|\nabla v_3|^2\right)\,dx
  +\int\limits_\mathcal{D}|v|^2\,dx.
\]
  For the operator $\mathcal{C}$ and by a simple verification we can see that the following
  Green identity holds:
\begin{equation} \label{n24}
  \int\limits_\mathcal{D}\mathcal{C}v\overline {g\mathstrut}\,dx=\int\limits_\mathcal{D}\mathcal{E}_0(v,g)\,dx-
  \int\limits_{\partial\mathcal{D}}\mathcal{M}v\overline {g\mathstrut}\,ds,
\end{equation}\\
  where $v\in C^2(\overline{\mathcal{D}\mathstrut})$; $g\in C^1(\overline{\mathcal{D}\mathstrut})$,
  and for $\mathcal{E}_0(v,g)$, when $\mu>0$ and  $\lambda=0$, we find the next equality:\\
\begin{equation} \label{n25}
  \mathcal{E}_0(v,g)=\mu(D_2v_1D_2\overline{g\mathstrut}_1+D_3v_1D_3\overline{g\mathstrut}_1)+
  \frac{1}{2}\mu\sum_{i,k\geq 2}(D_iv_k+D_kv_i)(D_i\overline{g\mathstrut}_k+
  D_k\overline{g\mathstrut}_i)
\end{equation}\\
\[
  \text{where} \:\: D_k=\frac{\partial}{\partial x_k},\: k=2,3.
\]
  we denote

\[
  E_0(v,g)=\int\limits_\mathcal{D}\mathcal{E}_0(v,g)\,dx+\int\limits_\mathcal{D}v\overline{g\mathstrut}\,dx.
\]
  Using the second Korn's inequality, see. \cite{FG74}, we obtain
\begin{align}
  \|v\|^2_1\leq\beta E_0(v,v) \,\quad \text{for} \quad v\in W^1_2(\mathcal{D}), \quad \beta >0. \label{n26}
\end{align}
  Since, for some $\gamma>0$ and for $v\in W^1_2(\mathcal{D})$ we obtain the inequality:
\begin{align}
  \gamma E_0(v,v)\leq \|v\|^2_1, \label{n27}
\end{align}
  from here it follows, that on the space $W^1_2(\mathcal{D})$ the
  norms $E_0(v,v)$ and $\|v\|^2_1$ are equivalent. In addition, from Sovolev's
  imbedding theorems, see \cite{MVP76}, and from relation (\ref{n26})
  for $v\in W^1_2(\mathcal{D})$, we have:
\begin{align} \label{n28}
  \|v\|^2\leq a^2E_0(v,v); \quad \|v\|^2_0\leq b^2E_0(v,v), \quad
  (\|v\|^2_0=\int\limits_{\partial\mathcal{D}}|v|^2\,ds).
\end{align}
  Using the relations (\ref{n24})--(\ref{n28}), the reduction of the spectral problem
  $\mathcal{L}_\omega(\alpha)$, which is a unbounded operator for each $\alpha\in \textbf{C}$,
  to bounded self-adjoint quadratic pencil $L_\omega (\alpha)$, following a
  line of reasoning similar to that used for the isotropic case,
  when the Lame constants are $\mu>0$  and  $\lambda>0$, see \cite{KAGyOMB81}, which
  yields the following equality for the resolvent:

\[
  \mathcal{L}^{-1}_\omega(\alpha)=P^{\frac{1}{2}}L^{-1}_\omega(\alpha)
  P^{\frac{1}{2}},
\medskip
\]
  where the operator \,$P\in\mathfrak{G}_q,\,q>\frac{3}{2} \ \,\text{and} \
  P>0$, where $\mathfrak{G}_q$~--- is the class of completely continuous operators
  for which the series $\sum s_j^p(A)$ converges, where
  $s_j(A)$~---are the $s$-numbers of the operator $A$.\\

\bigskip

\textbf{2.2~Reduction of the spectral problem
  $\boldsymbol{\mathcal{L}^0_\omega(\alpha)}$ to the pencil $\boldsymbol{L^0_\omega(\alpha)}$}.\\

  By space $\overset{0}{W}{^1_2(\mathcal{D})}$~--- we denote the closure of the set
  of all smooth finite functions in $\mathcal{D}$,
\smallskip
  in the norm of $W^1_2(\mathcal{D})$. We have for the operator $\mathcal{C}$,
  for all $v\in C^2(\overline{\mathcal{D}\mathstrut})$;
  $\left.{v}\right|_{\partial\mathcal{D}}=0$ and for all $g\in C^1(\overline{\mathcal{D}\mathstrut})$;
  $\left. {g} \right|_{\partial\mathcal{D}}=0$, from (\ref{n17}) there follows that

\begin{equation}\label{n29}
  (\mathcal{C}v,g)=\int\limits_{\mathcal{D}}\mathcal{E}_0(v,g)\,dx,
\end{equation}
  where $\mathcal{E}_0$ is defined in (\ref{n25}). Using the first Korn's inequality,
  see \cite{FG74}, we obtain to $v\in\overset{0}{W}{^1_2(\mathcal{D})}$ the next equality:

\[
  \gamma\int\limits_{\mathcal{D}}\mathcal{E}_0(v,v)\,dx\,dx\leq\|v\|^2_1\leq\beta\int\limits_{\mathcal{D}}\mathcal{E}_0(v,v)\,dx,
\]
  which mean that on the space \, $\overset{0}{W}{^1_2(\mathcal{D})}$\, the norms $\|v\|^2_1$
  \, and \, $\int\limits_{\mathcal{D}}\mathcal{E}_0(v,v)\,dx$ are equivalent.

  The reduction of the spectral problem $\mathcal{L}^0_\omega(\alpha)$, which is a unbounded operator for each
  $\alpha\in \textbf{C}$, to bounded self-adjoint quadratic pencil $L^0_\omega (\alpha)$,
  following a line of reasoning similar to that used for the isotropic case,
  when the Lame constants are $\mu>0$  and  $\lambda>0$, see \cite{KAGyOMB81}, which
  yields the following equality for the resolvent:
\[
  \mathcal{L}^0_\omega(\alpha)^{-1}=P^{\frac{1}{2}}_0\,L^0_\omega(\alpha)^{-1}
  P^{\frac{1}{2}}_0,
\]
  where the operator \,$P_0\in\mathfrak{G}_q,\,q>\frac{3}{2} \ \,\text{and} \
  P_0>0$.\\

\bigskip

\textbf{3.~Certain properties of the pencils
  $\boldsymbol{L_\omega(\alpha)}$ and $\boldsymbol{L^0_\omega(\alpha)}$}

\bigskip

  Here we formulate a series of statements, regarding the localization of the spectra of quadratics pencils
  $L_\omega(\alpha)$ and $L^0_\omega(\alpha)$ and the estimate of the resolvents for the spectral problems
  $\mathcal{L}_\omega(\alpha)$ and $\mathcal{L}^0_\omega(\alpha)$.
  One shows that for $\omega=0$ (the static case), the pencil $L_0(\alpha)$
  has only one real point of the spectra $\alpha=0$ (the remaining are complex) while the pencil
  $L^0_0(\alpha)$ is weakly damped, see \cite
  {KAGyOMB81,SAAySAV91,KMGyLGK65,KAGySAA83,KAGyOMB75,SAA89}.\\

\bigskip

\textbf{3.1~Static case for the spectral problems
  $\boldsymbol{\mathcal{L}_\omega(\alpha)}$ and $\boldsymbol{\mathcal{L}^0_\omega(\alpha)}$}

\bigskip

  We examine the static case, when $\omega=0$. We have the domain $\Omega_{\varepsilon,N}\smallskip$:
  \\
  $\Omega_{\varepsilon,N}=\{\alpha:|\arg\alpha|<\frac{\pi}{2}-\varepsilon,|\arg\alpha|>\frac{\pi}{2}+\varepsilon,
  -\pi<\arg\alpha\leq\pi,|\alpha|>N\}$, where \,$0<\varepsilon\leq\frac{\pi}{2}$.
\begin{lemma} \label{L1}
  For the isotropic structure, when the Lame constants are $\mu>0$ and $\lambda=0$,
  for any number $\varepsilon$, $0<\varepsilon<\frac{\pi}{2}$, one
  can indicate a number $N$, such that the spectral problems
  $\mathcal{L}_\omega(\alpha)$ and $\mathcal{L}^0_\omega(\alpha)$, are regular elliptic boundary problems
  with the respect to the parameter $\alpha$, in the sense of the
  definition of \cite {AMCyVMI64}, in the domain $\Omega_{\varepsilon,N}$.
\end{lemma}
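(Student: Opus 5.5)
We have to verify the Agranovich–Vishik conditions of ellipticity with a parameter for the operator $\mathcal{C}_\omega(\alpha)=\alpha^2\mathcal{A}-\alpha\mathcal{B}+\mathcal{C}-\omega^2\mathcal{R}$ and for the two boundary operators. In that theory the parameter $\alpha$ enters with weight one, like a derivative. The lower-order term $\omega^2\mathcal{R}$ is irrelevant, so we work with the principal symbol in $(\xi_2,\xi_3,\alpha)$. Replacing $D_k$ by $i\xi_k$ and putting $\xi=(\alpha,\xi_2,\xi_3)$, the principal symbol is
\[
\mu\left(\begin{array}{ccc} 2\alpha^2+\xi_2^2+\xi_3^2 & \alpha\xi_2 & \alpha\xi_3\\ \alpha\xi_2 & \alpha^2+2\xi_2^2+\xi_3^2 & \xi_2\xi_3\\ \alpha\xi_3 & \xi_2\xi_3 & \alpha^2+\xi_2^2+2\xi_3^2\end{array}\right)
=\mu\bigl(|\xi|^2 I+\xi\xi^{T}\bigr),
\]
where $|\xi|^2=\alpha^2+\xi_2^2+\xi_3^2$ is computed without conjugation. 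Up to signs of the mixed entries, which only reflect the $i$ conventions, this is the Lamé symbol with $\lambda=0$.

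Its determinant is $\mu^3(\alpha^2+\xi_2^2+\xi_3^2)^2\cdot 2(\alpha^2+\xi_2^2+\xi_3^2)$. This holds because the eigenvalues of $sI+\xi\xi^T$ are $s$ (twice) and $s+\xi^T\xi=2s$. Hence the symbol degenerates only when $\alpha^2=-(\xi_2^2+\xi_3^2)$, that is, $\alpha=\pm i|\xi'|$.

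\emph{Step 1 (ellipticity with parameter in the interior).} For real $(\xi_2,\xi_3)$ and $\alpha\in\Omega_{\varepsilon,N}$ with $(\xi',\alpha)\neq0$, the determinant does not vanish. The reason is that $\alpha^2+|\xi'|^2=0$ forces $\arg\alpha=\pm\pi/2$, which is excluded from the sector $|\arg\alpha\mp\pi/2|>\varepsilon$. By homogeneity and compactness on the sphere we also get $|\alpha^2+|\xi'|^2|\ge c_\varepsilon(|\alpha|^2+|\xi'|^2)$.

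\emph{Step 2 (Lopatinskii condition).} At a boundary point we rotate coordinates so that the normal is $x_2$-like, with tangential variable $\tau$ and normal variable $t\ge0$. We then solve the ODE system $\mu(|\xi|^2I+\xi\xi^T)(\alpha,\tau,-i\,d/dt)v=0$. Its roots in $\zeta$ are $\zeta=\pm i\,r$ with $r=\sqrt{\alpha^2+\tau^2}$, $\operatorname{Re} r>0$, each of multiplicity three. The decaying solutions form a three-dimensional space. It is spanned by $e^{-rt}$ times two vectors orthogonal (bilinearly) to $(\alpha,\tau,ir)$, together with a generalized vector with a $t\,e^{-rt}$ factor. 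This is the usual Jordan structure coming from the $\xi\xi^T$ part, exactly as in the Lamé case.

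For $\mathcal{L}^0_\omega$ we must show that $v(0)=0$ forces the decaying solution to vanish. This reduces to a $3\times3$ determinant, which we compute; alternatively it follows from strong ellipticity of the Dirichlet problem, since $\operatorname{Re}$ of the symbol is positive for real $\xi$ by the first Korn inequality quoted above. For $\mathcal{L}_\omega$ we substitute the decaying solutions into the principal part of $\mathcal{M}+i\alpha\mathcal{N}$ written in rotated coordinates, and we compute the resulting $3\times3$ Lopatinskii determinant as a function of $(\alpha,\tau)$.

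\emph{Step 3 (main obstacle).} The hard step is to show that the Neumann (free-boundary) Lopatinskii determinant has no zeros for $\alpha$ in the sector. I expect it to factor as $r^k$ times a Rayleigh-type polynomial in $\alpha^2/\tau^2$ of the kind that appears in \cite{KAGyOMB81}, specialized to $\lambda=0$. We will then locate its roots and check that they satisfy $\alpha^2=-c\,\tau^2$ with $c>0$ real, i.e.\ that they lie on the imaginary axis.

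For the real-$\tau$ zero-sector part we would argue through an energy identity instead. Multiply by $\bar v$ and integrate over $t>0$. This gives $\alpha^2\|\cdot\|^2+\mathcal{E}(v,v)=0$ with $\mathcal{E}\ge0$, where $\mathcal{E}$ is the form $\mathcal{E}_0$ extended to include $\alpha$-derivatives. The resulting real/imaginary-part argument shows that $\alpha^2$ must be real and non-positive, which is impossible off the imaginary axis.

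Finally, Steps 1--3 give ellipticity with parameter, and the constant $N$ is chosen large enough that the lower-order terms ($\omega^2\mathcal{R}$, and in the non-principal part of $\mathcal{M}$) are dominated. This yields regular ellipticity in $\Omega_{\varepsilon,N}$ in the sense of \cite{AMCyVMI64}.
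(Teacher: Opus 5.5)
The student's Step 1 matches the paper, the Dirichlet check is fine, but Step 3 (the free-boundary Lopatinskii condition) is a genuine gap: it is never actually verified, and the energy argument offered for it fails off the real axis.

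\emph{What agrees with the paper.} Your Step 1 is exactly the paper's Condition I: the paper computes $\det C_0(\xi,\alpha)=2\mu^3(\alpha^2+|\xi|^2)^3$. For $\mathcal{L}^0_\omega(\alpha)$ the paper asserts that this suffices. For $\mathcal{L}_\omega(\alpha)$ it formulates the Shapiro--Lopatinskii condition and explicitly declines to verify it. Your Dirichlet check works once it is written out. In your notation, with $\eta_+=(\alpha,\tau,ir)$, the decaying solutions are $e^{-rt}a$ with $\eta_+^{T}a=0$, and $e^{-rt}(b+t\eta_+)$ with $\eta_+^{T}b=-3r\neq0$. So $v(0)=0$ forces all coefficients to vanish. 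The strong-ellipticity remark by itself only covers real $\alpha$.

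\emph{The gap in Step 3.} The factorization into $r^k$ times a Rayleigh-type polynomial is only anticipated, not computed. The energy fallback is wrong. Pairing the model equation with $\bar v$ and using $\mathcal{U}(\alpha)v=0$ gives
\[
\alpha^2a(v)-\alpha\,b(v)+c(v)=0,\qquad a(v)>0,\ c(v)\ge0,\ b(v)\in\mathbf{R}.
\]
Here $b(v)$ comes from $-\alpha\mathcal{B}$ and $i\alpha\mathcal{N}$. It is the model analogue of $-(B_0\xi,\xi)=2\mu[\operatorname{Re}(v_2,iD_2v_1)+\operatorname{Re}(v_3,iD_3v_1)]$ in Lemma~\ref{L2}, and it is not zero in general. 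Since $\alpha$ enters only through $\alpha$ and $\alpha^2$, never through $\bar\alpha$, these terms cannot be absorbed into a nonnegative form $\mathcal{E}$ when $\alpha\notin\mathbf{R}$. For $\operatorname{Im}\alpha\neq0$ the identity yields only $\operatorname{Re}\alpha=b/(2a)$ and $|\alpha|^2=c/a$. Nothing forces $\operatorname{Re}\alpha=0$, which is what covering every $\varepsilon>0$ requires. The argument is valid only for real $\alpha$, where the left side is the three-dimensional strain energy.

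\emph{A way to close the gap without computing any determinant.} Both principal symbols use only the bilinear product $\eta^Tp$, where $p$ is the amplitude. These are the interior symbol $\mu(\eta^T\eta\,I+\eta\eta^T)$ and, up to a factor $i$, the traction symbol $\mu\bigl((\eta^Tn)p+(p^Tn)\eta\bigr)$, which is what $\mathcal{M}+i\alpha\mathcal{N}$ is. So both are covariant under $\operatorname{diag}(Q,1)$, where $Q\in O(2,\mathbf{C})$ acts on the axial and tangential components and fixes the normal one.

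For $\alpha\in\Omega_{\varepsilon,N}$ and real $\tau$ one has $r^2=\alpha^2+\tau^2\notin(-\infty,0]$. Take $Q=r^{-1}\left(\begin{smallmatrix}\alpha&\tau\\-\tau&\alpha\end{smallmatrix}\right)$; it satisfies $Q^TQ=I$ and $Q(\alpha,\tau)^T=(r,0)^T$. It therefore maps decaying solutions of the homogeneous model problem at $(\alpha,\tau)$ onto those at $(r,0)$. Homogeneity, via the substitution $s=rt$ in $e^{-rt}(p_0+tp_1)$, then reduces everything to the real covector $(1,0)$. There your energy argument is legitimate: it gives $e(u)=0$ for $u=e^{iy}V(t)$, so $u$ is a rigid displacement and $V=0$.

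Hence, for both boundary conditions, the model problem is uniquely solvable whenever $\alpha^2+\tau^2\notin(-\infty,0]$. In particular this holds on $\Omega_{\varepsilon,N}$ for every $\varepsilon>0$, and there is no Rayleigh-type factor to locate.
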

\textbf{Proof.}\: the problem
  $\mathcal{L}^0_\omega(\alpha)$~--- is a Dirichlet problem for the equation $\mathcal{C}_\omega(\alpha)v=f$
  in the domain $\mathcal{D}$. therefore, for the problem
  $\mathcal{L}^0_\omega(\alpha)$ it is sufficient to verify

\smallskip

  Condition I.
  $DetC_0(\xi,\alpha)\neq 0$ for $\alpha\in\Omega_{\varepsilon,N}$, where $C_0(\xi,\alpha)$ denotes the matrix obtained
  from $\mathcal{C}_0(\alpha)$ of (\ref{n17}), by replacing  $iD_k$ by $\xi_k$,
  where $k=2,3$ and
  $(\xi_2,\xi_3)\in \textbf{R}^2$.

  We obtain
\[
  DetC_0(\xi,\alpha)=\mu^3\left(2\alpha^6+6\alpha^4|\xi|^2+6\alpha^2|\xi|^4+2|\xi|^6\right)=
  2\mu^3\left(\alpha^2+|\xi|^2\right)^3>0,
\]
  where $|\xi|^2=\xi^2_2+\xi^2_3$. From here and where the Lame constant
  $\mu>0$, we obtain at once that condition I holds. Thus the lemma
  is proved for the spectral problem $\mathcal{L}^0_\omega(\alpha)$.

  In order to prove the lemma for the spectral problem $\mathcal{L}_\omega(\alpha)$,
  one has to show, in addition to condition I, also:

  Condition II.
  At each point $(x_2,x_3)\in\partial\mathcal{D}$ the Shapiro"--~Lopatinskii condition holds
  from the spectral problem with the parameter, see \cite{AMCyVMI64}, for
  $\alpha\in\Omega_{\varepsilon,N}$.

  We shall not dwell on the verification of this condition. We formulate only this condition,
  for example, at the point $x_0=(0,0)$, assuming that $x_0\in\partial\mathcal{D}$,
  and that the axis $Ox_3$ is tangent to the boundary $\partial\mathcal{D}$
  at the point $x_0$, while the normal vector $n_0=(1,0)$ is directed along the $Ox_2$ axis.
  Then, condition II for the spectral problem $\mathcal{L}_\omega(\alpha)$ is formulated in the
  following manner. The problem
\[
  \mathcal{C}_0(\alpha,-iD_2,\xi_3)v(y)=0 \quad (y>0),
\]
\[
  \left.{[\mathcal{M}(-iD_2,\xi_3)+i\alpha\mathcal{N}(n_0)]v(y)}\right|_{\,y=0}=h=
\begin{array}\lgroup {c} \rgroup
  {\! \! \!} h_1 {\! \! \!} \\
  {\! \! \!} h_2 {\! \! \!} \\
  {\! \! \!} h_3 {\! \! \!}
\end{array},\,
  h_i\in\textbf{C},
\]
  for $|\xi_3|+|\alpha|\neq 0$, $\alpha\in\Omega_{\varepsilon,N}$ and
  for any $h$ has only one solution in the class $\mathfrak{M}$ of
  solutions, tending to zero together whit the derivatives as $y\rightarrow +\infty$.
  QED.\\

  According to the results of \cite{AMCyVMI64} and lemma~\ref{L1}, there follows that in the domain
  $\Omega_{\varepsilon,N}$ for any $f\in L_2(\mathcal{D})$ one has the estimate:
\begin{align} \label{n30}
  \|\mathcal{L}^{-1}_\omega(\alpha)f\|_2+|\alpha|\|\mathcal{L}^{-1}_\omega(\alpha)f\|_1+
  |\alpha|^2 \|\mathcal{L}^{-1}_\omega(\alpha)f\|\leq r\|f\|,
\end{align}
\begin{align} \label{n31}
  \|\mathcal{L}^0_\omega(\alpha)^{-1}f\|_2+|\alpha|\|\mathcal{L}^0_\omega(\alpha)^{-1}f\|_1+
  |\alpha|^2 \|\mathcal{L}^0_\omega(\alpha)^{-1}f\|\leq r\|f\|.
\end{align}
  At, since
  $\mathcal{L}^{-1}_\omega(\alpha)=P^{\frac{1}{2}}L^{-1}_\omega(\alpha)P^{\frac{1}{2}}$
  and, therefore, taking into account that for any vector $g\in W^1_2(\mathcal{D})$ the norm
  $\|g\|_1\cong\|P^{-\frac{1}{2}}g\|$, for estimate (\ref{n30}), we obtain
\[
  |\overline{\alpha}|\|\mathcal{L}^{-1}_\omega (\overline\alpha)f \|_1 \cong
  |\overline{\alpha}|\|L^{-1}_\omega(\overline\alpha)P^{\frac{1}{2}}f\|\leq r_1\|f\|.
\]
  From here we obtain the relation
  $\|L^{-1}_\omega(\overline{\alpha})P^{\frac{1}{2}}\|\leq
  r_1|\overline{\alpha}|^{-1}$, and thus,
\[
  \|P^{\frac{1}{2}}L^{-1}_\omega(\alpha)\|\leq r_1|\alpha|^{-1},\:
  \text{for}\: \alpha\in\Omega_{\varepsilon,N}.
\]
  Similary, for the spectral problem $\mathcal{L}^0_\omega(\alpha)$ from estimate (\ref{n31})
  and the equality for the resolvent \,$\mathcal{L}^0_\omega(\alpha)^{-1}=P^{\frac{1}{2}}_0\,L^0_\omega(\alpha)^{-1}
  P^{\frac{1}{2}}_0$\, we obtain that
\[
  \|P^{\frac{1}{2}}_0L^0_\omega(\alpha)^{-1}\|\leq r_2|\alpha|^{-1},\:
  \text{for}\: \alpha\in\Omega_{\varepsilon,N}.
\]

\begin{lemma} \label{L2}
  Let $\omega=0$. Then the quadratic pencil $L^0_0(\alpha)$ is uniformly weakly
  damped, i.e., one has the estimate:
  \begin{equation} \label{n32}
  (B_1\zeta,\zeta)^2\leq 4\tau^2(A_1\zeta,\zeta)(\zeta,\zeta),\:\zeta\in L_2(\mathcal{D}),
  \end{equation}
  with some constant $\tau$, where $\frac{1}{2}\leq \tau^2<1$, and where
  $B_1=-P^{\frac{1}{2}}_0\mathcal{B}P^{\frac{1}{2}}_0$ \,and\,
  $A_1=P^{\frac{1}{2}}_0\mathcal{A}P^{\frac{1}{2}}_0$. The pencil
  $L_0(\alpha)$ has on the real axis only one spectral point
  $\alpha=0$, to which there correspond four linearly independent eigenvectors
  of this pencil.
\end{lemma}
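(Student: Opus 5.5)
The plan is to translate both assertions back to quadratic forms of the differential operators via the substitution $v=P_0^{\frac12}\zeta$ (respectively $v=P^{\frac12}\zeta$). By the reduction of Section~2.2, $P_0$ is the inverse of the Dirichlet realization of $\mathcal{C}$, so for $\zeta\in L_2(\mathcal{D})$ and $v=P_0^{\frac12}\zeta\in\overset{0}{W}{^1_2(\mathcal{D})}$ I expect
\[
(\zeta,\zeta)=\int_{\mathcal{D}}\mathcal{E}_0(v,v)\,dx ,\qquad (A_1\zeta,\zeta)=(\mathcal{A}v,v),\qquad (B_1\zeta,\zeta)=-(\mathcal{B}v,v).
\]
These identities hold first on a dense set and then extend by continuity. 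Hence (\ref{n32}) becomes an inequality between three explicit integrals in $v$.

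For the weak damping I will write $w=(v_2,v_3)$ and $\operatorname{div}w=D_2v_2+D_3v_3$. One has
\[
(\mathcal{B}v,v)=i\mu\int_{\mathcal{D}}\bigl(\operatorname{div}w\,\overline{v_1}+\nabla v_1\cdot\overline{w}\bigr)\,dx .
\]
Integrating the second term by parts uses $v|_{\partial\mathcal{D}}=0$. This gives $(\mathcal{B}v,v)=-2\mu\,\mathrm{Im}\int\operatorname{div}w\,\overline{v_1}\,dx$, so that $|(\mathcal{B}v,v)|\le 2\mu\|\operatorname{div}w\|\,\|v_1\|$. On the other side, expanding (\ref{n25}) yields
\[
\int_{\mathcal{D}}\mathcal{E}_0(v,v)\,dx=\mu\|\nabla v_1\|^2+\mu\bigl(2\|D_2v_2\|^2+2\|D_3v_3\|^2+\|D_2v_3+D_3v_2\|^2\bigr)\ge\mu\|\operatorname{div}w\|^2 .
\]
Here the last step uses $|a+b|^2\le2|a|^2+2|b|^2$. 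We also have $(\mathcal{A}v,v)=\mu(2\|v_1\|^2+\|w\|^2)\ge2\mu\|v_1\|^2$. Combining these,
\[
(B_1\zeta,\zeta)^2\le4\mu^2\|\operatorname{div}w\|^2\|v_1\|^2\le 2\,(\mathcal{A}v,v)\int_{\mathcal{D}}\mathcal{E}_0(v,v)\,dx .
\]
This is (\ref{n32}) with $\tau^2=\frac12$. If a strict inequality $\tau^2<1$ together with some uniformity is wanted, $\tau^2=\frac12$ already lies in the admissible range $\frac12\le\tau^2<1$.

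For the pencil $L_0(\alpha)$ the same substitution, now with $P$ (the inverse associated with $E_0$, which contains $\int|v|^2$), gives
\[
(L_0(\alpha)\zeta,\zeta)=\alpha^2(\mathcal{A}v,v)-\alpha(\mathcal{B}v,v)+\int_{\mathcal{D}}\mathcal{E}_0(v,v)\,dx .
\]
The key observation is that for real $\alpha$ this form equals $2\int_{\mathcal{D}}W$ evaluated on the strain of the three-dimensional field $u=e^{i\alpha y}v$, i.e. $\int\sum_{i\le k}|e_{ik}|^2$ with $D_1$ replaced by $i\alpha$. This should follow from the Green identity (\ref{n24}) with the boundary term absorbed by $\mathcal{N}$ in (\ref{n18}). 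Checking this bookkeeping (the boundary terms and the sign of $\mathcal{B}$) is the step I expect to be most delicate.

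Granting the identity, the form is nonnegative. A real eigenvalue $\alpha$ with eigenvector $\zeta$ forces all strains to vanish: $(L_0(\alpha)\zeta,\zeta)=0$, and a nonnegative self-adjoint operator with zero form value annihilates the vector. If $\alpha\neq0$, then $e_{11}=i\alpha v_1=0$ gives $v_1=0$, and $2e_{1k}=i\alpha v_k+D_kv_1=0$ gives $v_2=v_3=0$; so no nonzero real $\alpha$ is an eigenvalue. If $\alpha=0$, the kernel consists of the $v$ with $\int\mathcal{E}_0(v,v)=0$. This means $v_1$ is constant and $w$ is a planar rigid motion $(a_2,a_3)+c(-x_3,x_2)$, so the kernel has dimension $1+3=4$. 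One should also note that real non-eigenvalue points are not in the spectrum, because $L_0(\alpha)$ is a compact perturbation of an invertible operator there.
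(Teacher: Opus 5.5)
Your weak-damping argument for $L^0_0(\alpha)$ is essentially the paper's. Both reduce (\ref{n32}) to three bounds: $(B_1\zeta,\zeta)^2\le4\mu^2\|D_2v_2+D_3v_3\|^2\|v_1\|^2$, $(\mathcal{A}v,v)\ge2\mu\|v_1\|^2$ and $\int_{\mathcal{D}}\mathcal{E}_0(v,v)\,dx\ge\mu\|D_2v_2+D_3v_3\|^2$. Together these give $\tau^2=\frac12$. The only difference is the source of the last bound: the paper reads it off the Dirichlet identity $\int\mathcal{E}_0=\mu\|D_2v_2+D_3v_3\|^2+\mu\sum_j(\|D_2v_j\|^2+\|D_3v_j\|^2)$, while you use $2|a|^2+2|b|^2\ge|a+b|^2$.

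For $L_0(\alpha)$ you take a different route. The paper computes $-(B_0\xi,\xi)=2\mu\,\mathrm{Re}[(v_2,iD_2v_1)+(v_3,iD_3v_1)]$ and forms $\mathrm{I}(\xi)=4(A_0\xi,\xi)((I-P)\xi,\xi)-(B_0\xi,\xi)^2$. It bounds the squared cross term by Cauchy--Schwarz by $\mu^2(\|v_2\|^2+\|v_3\|^2)(\|D_2v_1\|^2+\|D_3v_1\|^2)$ and absorbs it into the product. This gives $\mathrm{I}(\xi)\ge0$, with equality only when $\nabla v_1=0$ and $D_2v_2=D_3v_3=D_3v_2+D_2v_3=0$. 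A real root of $(L_0(\alpha)\xi,\xi)=0$ then forces $v$ into this four-dimensional space, where the linear and constant coefficients vanish, so $\alpha=0$.

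Your strain identity is the completed-square form of the same inequality: a real quadratic with positive leading coefficient is $\ge0$ on $\mathbf{R}$ iff its discriminant is $\le0$. What your version buys:
\begin{itemize}
\item it is more transparent;
\item it excludes $\alpha\neq0$ in one line via $e_{11}=e_{12}=e_{13}=0$;
\item your Fredholm remark makes explicit what the paper takes from the discreteness of the spectrum.
\end{itemize}
The paper's version instead hands you the equality set of the discriminant directly, which is the form used in the Kostyuchenko--Orazov theory.

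On the step you flagged, your displayed formula is missing the boundary contribution of $\mathcal{N}$. For $v\in W^1_2(\mathcal{D})$ without boundary conditions, $(\mathcal{B}v,v)$ is not even real. You must add $(-i\mathcal{N}v,v)_0=-i\mu(n_2v_2+n_3v_3,v_1)_0$, which turns the coefficient of $\alpha$ into $-2\mu\,\mathrm{Re}[(v_2,iD_2v_1)+(v_3,iD_3v_1)]$, as the paper computes. With that correction, plain expansion (no Green formula needed) gives, for real $\alpha$,
\[
(L_0(\alpha)\zeta,\zeta)=\mu\bigl(2\alpha^2\|v_1\|^2+\|i\alpha v_2+D_2v_1\|^2+\|i\alpha v_3+D_3v_1\|^2+2\|D_2v_2\|^2+2\|D_3v_3\|^2+\|D_3v_2+D_2v_3\|^2\bigr).
\]
This equals $2\mu\int\sum_{i,k}|e_{ik}|^2$, with the off-diagonal strains counted twice rather than the $\sum_{i\le k}$ you wrote. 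The opposite sign convention for $B_0$ only replaces $\alpha$ by $-\alpha$, so nonnegativity is unaffected. With this identity the rest of your argument goes through as written.
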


\bigskip
\textbf{Proof.}~ At the first, we examine the quadratic pencil
  $L^0_0(\alpha)$ and we will proof the relation (\ref{n32}). Let
  $v=P^{\frac{1}{2}}_0\zeta$, then we have the equalities:
\[
  (B_1\zeta,\zeta)=-(\mathcal{B}v,v)=-2\mu\mathop{\text{Re}}(iD_2v_2+iD_3v_3,v_1),
\]
\[
  (\zeta,\zeta)=(P^{-\frac{1}{2}}_0v,P^{-\frac{1}{2}}_0v)=\int\limits_{\mathcal{D}}\mathcal{E}_0(v,v)\,dx=
  \mu\|D_2v_2+D_3v_3\|^2+\mu\sum^{3}_{j=1}\left(\|D_2v_j\|^2+\|D_3v_j\|^2\right)=
\]
\[
  =\mu\|D_2v_2+D_3v_3\|^2+\mu\left(\|D_2v_1\|^2+\|D_3v_1\|^2+\|D_2v_2\|^2+\|D_3v_2\|^2+
  \|D_2v_3\|^2+\|D_3v_3\|^2\right).
\]
\[
  (A_1\zeta,\zeta)=(\mathcal{A}v,v)=\mu\|v_1\|^2+\mu\sum^3_{j=1}\|v_j\|^2=\mu\|v_1\|^2+
  \mu\left(\|v_1\|^2+\|v_2\|^2+\|v_3\|^2\right).
\]
  From these relations we find that
\[
  4\tau^2(A_1\zeta,\zeta)(\zeta,\zeta)-(B_1\zeta,\zeta)^2=4\tau^2
  [\:\mu\|v_1\|^2+\mu\sum^3_{j=1}\|v_j\|^2\,]\,\times
\]
\[
  \times\,[\:\mu\|D_2v_2+D_3v_3\|^2+\mu\sum^3_{j=1}
  \left(\|D_2v_j\|^2+\|D_3v_j\|^2\right)\,]-4\mu^2[\,\mathop{\text{Re}}(iD_2v_2+iD_3v_3,v_1)\,]^2=
\]
\[
  =4\mu^2\tau^2[\,\|v_1\|^2\|D_2v_2+D_3v_3\|^2\!+\|D_2v_2+D_3v_3\|^2
  \!\sum^3_{j=1}\|v_j\|^2\!+\|v_1\|^2\!\sum^3_{j=1}\left(\|D_2v_j\|^2\!+\|D_3v_j\|^2\right)\!+
\]
\[
  +\sum^3_{j=1}\|v_j\|^2\sum^3_{j=1}\left(\|D_2v_j\|^2+\|D_3v_j\|^2\right)\,]-4\mu^2
  [\,\mathop{\text{Re}}(iD_2v_2+iD_3v_3,v_1)\,]^2=
\]
\[
  =4\mu^2[\,(\tau^2-1)\|v_1\|^2\|D_2v_2+D_3v_3\|^2+\tau^2\|D_2v_2+D_3v_3\|^2\sum^3_{j=1}\|v_j\|^2+
\]
\[
  +\,\tau^2\|v_1\|^2\sum^3_{j=1}\left(\|D_2v_j\|^2+\|D_3v_j\|^2\right)+\tau^2\sum^3_{j=1}\|v_j\|^2
  \sum^3_{j=1}\left(\|D_2v_j\|^2+\|D_3v_j\|^2\right)]+
\]

\[
  +\,4\mu^2[\,\|v_1\|^2\|D_2v_2+D_3v_3\|^2-\mathop{\text{Re}}(iD_2v_2+iD_3v_3,v_1)\,]^2\geq
\]

\[
  \geq
  4\mu^2\tau^2[\,\|D_2v_2+D_3v_3\|^2\sum^3_{j=2}\|v_j\|^2+\|v_1\|^2\sum^3_{j=1}\left(\|D_2v_j\|^2+
  \|D_3v_j\|^2\right)+
\]
\[
  +\sum^3_{j=1}\|v_j\|^2\sum^3_{j=1}\left(\|D_2v_j\|^2+\|D_3v_j\|^2\right)]+
  4[\,\mu^2(\tau^2-1)+\mu^2\tau^2\,]\|v_1\|^2\|D_2v_2+D_3v_3\|^2\geq 0,
\]
  if \, $\mu^2(\tau^2-1)+\mu^2\tau^2\geq 0$. From here it follows that inequality
  (\ref{n32}) holds for \, $\frac{1}{2}\leq\tau^2<1$.\\

  Now we considerer the quadratic pencil $L_0(\alpha)$. If $v=P^{\frac{1}{2}}\xi$, then, taking into
  account the previous results and the next relation:
\begin{equation} \label{n33}
  (iD_kv_1,v_k)=-(v_1,in_kv_k)_0+(v_1,iD_kv_k),\, k=2,3,
\end{equation}
  we find the next relations:
\[
  -(B_0\xi,\xi)=((B+Q)\xi,\xi)=(\mathcal{B}v,v)+(-i\mathcal{N}v,v)=
\]
\[
  =2\mu\mathop{\text{Re}}(iD_2v_2+iD_3v_3,v_1)+i\mu[\,(v_1,n_2v_2)_0+(v_1,n_3v_3)_0]
  -i\mu[\,(n_2v_2,v_1)_0+(n_3v_3,v_1)_0]=
\smallskip
\]
\[
  =2\mu\mathop{\text{Re}}(iD_2v_2+iD_3v_3,v_1)-2\mu\mathop{\text{Re}}(in_2v_2+in_3v_3,v_1)_0=
\smallskip
\]
\[
  =2\mu[\mathop{\text{Re}}(v_2,iD_2v_1)+\mathop{\text{Re}}(v_3,iD_3v_1)],
\smallskip
\]
  where the operators $B=P^{\frac{1}{2}}\mathcal{B}P^{\frac{1}{2}}$ \,and\,
  $Q\in\mathfrak{G}_l,\,l>3$, see \cite{KAGyOMB81}.

  Besides, we have the next relations:
\[
  ((I-P)\xi,\xi)=(P^{-\frac{1}{2}}v,P^{-\frac{1}{2}}v)-(v,v)=\int\limits_\mathcal{D}
  \mathcal{E}_0(v,v)\,dx=
\]
\[
  =\mu\|D_3v_2+D_2v_3\|^2+\mu\left(\|D_2v_1\|^2+\|D_3v_1\|^2\right)+2\mu\left(\|D_2v_2\|^2+\|D_3v_3\|^2\right),
\]
and
\[
  (A_0\xi,\xi)=\mu\|v_1\|^2+\mu\sum^3_{j=1}\|v_j\|^2=\mu\|v_1\|^2+\mu\left(\|v_1\|^2+\|v_2\|^2+\|v_3\|^2\right),
\]
  where the operator $A_0=P^{\frac{1}{2}}\mathcal{A}P^{\frac{1}{2}}$, see \cite{KAGyOMB81}.\\

  We denote \:
  $\text{I}(\xi)=4(A_0\xi,\xi)((I-P)\xi,\xi)-(B_0\xi,\xi)^2.$

  This expression is the discriminant of the equation $(L_0(\alpha)\xi,\xi)$,
  taken with the oppositive sign. We recall that
\[
  \frac{1}{4}\text{I}(\xi)=[\:\mu\|v_1\|^2+\mu\sum^3_{j=1}\|v_j\|^2\,][\:\mu\|D_3v_2+D_2v_3\|^2
  +\mu\left(\|D_2v_1\|^2+\|D_3v_1\|^2\right)+
\]
\[
  +\,2\mu\left(\|D_2v_2\|^2+\|D_3v_3\|^2\right)]-[\,\mu\mathop{\text{Re}}(v_2,iD_2v_1)+
  \mu\mathop{\text{Re}}(v_3,iD_3v_1)]^2=
\]
\smallskip
\[
  =\,\mu^2[\,\|v_1\|^2\|D_3v_2+D_2v_3\|^2+\|v_1\|^2\left(\|D_2v_1\|^2+\|D_3v_1\|^2\right)
  +2\|v_1\|^2\left(\|D_2v_2\|^2+\|D_3v_3\|^2\right)+
\]
\[
  +\,\|D_3v_2+D_2v_3\|^2\sum^3_{j=1}\|v_j\|^2+\sum^3_{j=1}\|v_j\|^2\left(\|D_2v_1\|^2+\|D_3v_1\|^2\right)+
\]
\[
  +\,2\sum^3_{j=1}\|v_j\|^2\left(\|D_2v_2\|^2+\|D_3v_3\|^2\right)]-\mu^2[\,\mathop{\text{Re}}(v_2,iD_2v_1)+
  \mathop{\text{Re}}(v_3,iD_3v_1)]^2\geq
\]
\[
  \geq\,\mu^2[\,\|v_1\|^2\|D_3v_2+D_2v_3\|^2+\|v_1\|^2\left(\|D_2v_1\|^2+\|D_3v_1\|^2\right)+
\]
\[
  +\,2\|v_1\|^2\left(\|D_2v_2\|^2+\|D_3v_3\|^2\right)
  +\,\|D_3v_2+D_2v_3\|^2\sum^3_{j=1}\|v_j\|^2+
\]
\[
  +\,\|v_1\|^2\left(\|D_2v_1\|^2+\|D_3v_1\|^2\right)+
  2\sum^3_{j=1}\|v_j\|^2\left(\|D_2v_2\|^2+\|D_3v_3\|^2\right)+
\]
\[
  +\sum^3_{j=2}\|v_j\|^2\left(\|D_2v_1\|^2+\|D_3v_1\|^2\right)-
  \left(\|v_2\|^2+\|v_3\|^2\right)\left(\|D_2v_1\|^2+\|D_3v_1\|^2\right)]\geq
\]
\[
 \geq\,\mu^2[\,\|v_1\|^2\|D_3v_2+D_2v_3\|^2+\,2\|v_1\|^2\left(\|D_2v_1\|^2\!+\|D_3v_1\|^2\right)+
 \,2\|v_1\|^2\left(\|D_2v_2\|^2\!+\|D_3v_3\|^2\right)+
\]
\begin{equation} \label{n34}
  +\,\sum^3_{j=1}\|v_j\|^2\|D_3v_2+D_2v_3\|^2
  +2\sum^3_{j=1}\|v_j\|^2\left(\|D_2v_2\|^2+\|D_3v_3\|^2\right)\geq 0.
\end{equation}

  Then the inequality (\ref{n34}) always  follows from the value $\mu>0$.
  Thus, $\text{I}(\xi)\geq 0$ for any function $\xi\in L_2(\mathcal{D})$.
  From the proof is clear that $\text{I}(\xi)=0$ only if
\[
  D_2v_1=D_3v_1=D_2v_2=D_3v_3=D_3v_2+D_2v_3=0.
\]
  This is possible only for
\[
  v_1=m_1; \quad v_2=m_2+m_4x_3; \quad v_3=m_3-m_4x_2,
\]
  where $m_i$ -- are arbitrary constants. By straightforward verification we can
  see that the vectors
\begin{align} \label{n35}
  v_0=(1,0,0); \: u_0=(0,1,0); \: w_0=(0,0,1); \; g_0=(0,x_3,-x_2),
\end{align}
  are indeed eigenvectors of the spectral problem $\mathcal{L}_0(\alpha)$,
  corresponding to the eigenvalue for $\alpha=0$ and from has been
  said above there follows that there are not other independent
  eigenvector for $\alpha=0$. The lemma is proved.\\

\medskip
\textbf{4.~Fundamental theorems for the pencils
  $\boldsymbol{L_\omega(\alpha)}$ and $\boldsymbol{L^0_\omega(\alpha)}$}\\

  We mention the following results. If $v\rightarrow \overline {v\mathstrut},\,v\in
  L_2(\mathcal{D})$~--- is the operator of passing to the conjugate function,
  then we have the equalities

\[
  \overline{\mathcal{L}_\omega(\alpha)v\mathstrut}=\mathcal{L}_\omega(-\overline
  {\alpha\mathstrut})\overline{v\mathstrut}, \: \, \text{where} \: \, v\in
  \mathscr{D}(\mathcal{L}_\omega(\alpha)) \; \; \text{and} \; \;
  \overline{\mathcal{L}^0_\omega(\alpha)v\mathstrut}=\mathcal{L}^0_\omega(-\overline
  {\alpha\mathstrut})\overline{v\mathstrut}, \: \: \text{where} \: \: v\in
  \mathscr{D}(\mathcal{L}^0_\omega(\alpha)).
\]

  If $\xi\rightarrow\overline{\xi\mathstrut}, \: \xi\in L_2(\mathcal{D})$~--- is the operator of passing
  to the conjugate function, then we have the equalities

\[
  \overline{L_\omega(\alpha)\xi\mathstrut}=L_\omega(-\overline{\alpha\mathstrut})
  \overline{\xi\mathstrut} \; \; \: \text{and} \; \; \:
  \overline{L^0_\omega(\alpha)\xi\mathstrut}=
  L^0_\omega(-\overline{\alpha\mathstrut})\overline{\xi\mathstrut}.
\medskip
\]

  From this equalities it is clear that if $\xi_n$ is an eigenvector
  of the quadratic pencil $L_\omega(\alpha)$ (or $L^0_\omega(\alpha)$) and
  it corresponds to the eigenvalue $\alpha_n$, then $\overline{\xi_n\mathstrut}$
  is also an eigenvector of the quadratic pencil $L_\omega(\alpha)$ (respectively $L^0_\omega(\alpha)$),
  corresponding to the eigenvalue $-\overline{\alpha\mathstrut}$.
  Taking into account that the quadratics pencils $L_\omega(\alpha)$ and $L^0_\omega(\alpha)$
  are self-adjoint, we obtain that their spectra $\sigma(L_\omega)$ and $\sigma(L^0_\omega)$
  are symmetrically situated relative to the axis and the origin.
  From the theorem~1.5 of the work \cite{AMCyVMI64} there follows that these spectra
  consist only of eigenvalues of finite multiplicity with a unique possible accumulation point
  at the infinity.\\

  The facts established above and lemma \ref{L2}, can bee summed up in the form of the following
  theorems:

\begin{theorem} \label{T1}
  For the isotropic structure, when the Lame constant are $\mu>0$ and  $\lambda=0$,
  the spectra $\sigma(L_\omega)$ and $\sigma(L^0_\omega)$ of the quadratic pencils $L_\omega(\alpha)$ and
  $L^0_\omega(\alpha)$, when  $\omega^2\geq 0$, consist of eigenvalues $\alpha_n$
  of finite multiplicity, symmetrical situated relative to the real axis and the origin, and, with the exception
  of a finite number of points, are in arbitrary in small angles,
  adjoining the imaginary axis. In addition, for $\omega=0$ the quadratic pencil
  $L^0_0(\alpha)$ does not have spectral points on the real axis, while the quadratic pencil $L_0(\alpha)$
  has only one spectral point $\alpha=0$, to which there correspond four linear independent
  vectors: $P^{-\frac{1}{2}}v_0, \; P^{-\frac{1}{2}}u_0, \; P^{-\frac{1}{2}}w_0, \; P^{-\frac{1}{2}}g_0$,
  where vectors $v_0, \; u_0, \; w_0, \; g_0$ are defined in (\ref{n35}) and are eigenvectors
  of the spectral problem $\mathcal{L}_0(\alpha)$, corresponding to the point $\alpha=0$.
  For $\omega^2\geq 0$ the quadratic pencils $L_\omega(\alpha)$ and $L^0_\omega(\alpha)$ admit in the domain
  $\Omega_{\varepsilon,N}$ the resolvent estimates
  \begin{align} \label{n36}
  \|P^{\frac{1}{2}}L^{-1}_\omega(\alpha)\|\leq c(\varepsilon,N)|\alpha|^{-1} \; \text{and} \;
  \; \|P^{\frac{1}{2}}_0L^0_\omega(\alpha)^{-1}\|\leq
  c(\varepsilon,N)|\alpha|^{-1}
  \end{align}
\end{theorem}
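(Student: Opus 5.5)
The plan is to assemble Theorem~\ref{T1} from Lemma~\ref{L1}, Lemma~\ref{L2}, the resolvent identities of Section~2, and the conjugation symmetry recorded at the beginning of Section~4. Most of the statement is already proved, and what remains is to connect the pieces carefully.

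\emph{Discreteness and symmetry.} First, by Lemma~\ref{L1}, $\mathcal{L}_\omega(\alpha)$ and $\mathcal{L}^0_\omega(\alpha)$ are regular elliptic problems with parameter in $\Omega_{\varepsilon,N}$. Hence, by the results of \cite{AMCyVMI64}, they are invertible there, and theorem~1.5 of \cite{AMCyVMI64} shows that the spectrum consists of isolated eigenvalues of finite multiplicity accumulating only at infinity. The identities $\mathcal{L}^{-1}_\omega(\alpha)=P^{\frac12}L^{-1}_\omega(\alpha)P^{\frac12}$ and $\mathcal{L}^0_\omega(\alpha)^{-1}=P^{\frac12}_0L^0_\omega(\alpha)^{-1}P^{\frac12}_0$, with $P,P_0>0$ injective, transfer this description to the pencils. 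In particular, eigenvectors correspond via $\xi=P^{-\frac12}v$, and chain lengths are preserved.

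Next comes the symmetry. The conjugation identity $\overline{L_\omega(\alpha)\xi}=L_\omega(-\overline{\alpha})\overline{\xi}$ gives invariance of the spectrum under $\alpha\mapsto-\overline\alpha$. Self-adjointness, $L_\omega(\alpha)^*=L_\omega(\overline\alpha)$, gives invariance under $\alpha\mapsto\overline\alpha$. Composing the two maps yields $\alpha\mapsto-\alpha$, which is the claimed symmetry with respect to the real axis and the origin.

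\emph{Localization near the imaginary axis.} Since $\Omega_{\varepsilon,N}$ is contained in the resolvent set for every small $\varepsilon$, every eigenvalue with $|\alpha|>N(\varepsilon)$ lies in the sectors $\bigl||\arg\alpha|-\frac{\pi}{2}\bigr|\le\varepsilon$. Only finitely many eigenvalues lie in the disc $|\alpha|\le N$, by discreteness. This gives the localization statement.

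\emph{Resolvent estimates.} The bounds (\ref{n36}) are exactly the inequalities derived after Lemma~\ref{L1} from (\ref{n30})--(\ref{n31}). The argument uses the norm equivalence $\|g\|_1\cong\|P^{-\frac12}g\|$ and passes to the adjoint, replacing $\alpha$ by $\overline\alpha$. Since $\Omega_{\varepsilon,N}$ is invariant under conjugation, the estimate holds on the whole domain. I would only note that the constants depend on $\varepsilon$ and $N$, and, through $\omega$, only via a lower-order term absorbed for large $|\alpha|$.

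\emph{The static case $\omega=0$.} For $L^0_0$, I use the uniform weak damping (\ref{n32}) with $\tau<1$. If $\alpha\in\mathbf R$ and $L^0_0(\alpha)\zeta=0$ with $\zeta\neq0$, then $(L^0_0(\alpha)\zeta,\zeta)=0$ is a real quadratic in $\alpha$. Its discriminant is $(B_1\zeta,\zeta)^2-4(A_1\zeta,\zeta)(\zeta,\zeta)\le-4(1-\tau^2)(A_1\zeta,\zeta)(\zeta,\zeta)<0$, since $A_1>0$ (as $\mathcal{A}>0$ and $P_0$ is injective). This contradicts the existence of a real root, so $L^0_0$ has no real spectrum.

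For $L_0$, a real eigenvalue $\alpha$ with eigenvector $\xi$ forces $\text{I}(\xi)\ge0$ to vanish, since otherwise the real quadratic has no real root. By Lemma~\ref{L2}, $\text{I}(\xi)=0$ forces $v=P^{\frac12}\xi$ to be a combination of $v_0,u_0,w_0,g_0$ from (\ref{n35}). These vectors satisfy $\mathcal{L}_0(0)v=0$. For each such $v$, $(B_0\xi,\xi)=0$ by the formula $2\mu[\mathop{\text{Re}}(v_2,iD_2v_1)+\mathop{\text{Re}}(v_3,iD_3v_1)]$, because $v_1$ is constant. Also $((I-P)\xi,\xi)=0$, so the quadratic reduces to $\alpha^2(A_0\xi,\xi)=0$, giving $\alpha=0$. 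Hence $0$ is the only real spectral point, with exactly the four independent eigenvectors $P^{-\frac12}v_0,\dots,P^{-\frac12}g_0$.

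The main obstacle is the localization step, because it rests on the Shapiro--Lopatinskii condition II, whose verification Lemma~\ref{L1} explicitly omits. If a genuine check were needed, I would solve the half-line ODE system $\mathcal{C}_0(\alpha,-iD_2,\xi_3)v=0$. Its characteristic roots are $\pm\sqrt{\alpha^2+\xi_3^2}$, triply degenerate by the determinant computation. I would then show that the $3\times3$ boundary matrix acting on the decaying solutions has nonzero determinant for $\alpha\in\Omega_{\varepsilon,N}$. The expectation is that this determinant is a nonvanishing multiple of a power of $\alpha^2+\xi_3^2$ times a Rayleigh-type factor, which must be checked to have no zeros in the sector.
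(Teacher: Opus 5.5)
Your proposal is correct and follows essentially the same route as the paper, which obtains Theorem~\ref{T1} by assembling Lemma~\ref{L1} with theorem~1.5 of Agranovich--Vishik, the conjugation and self-adjointness symmetry from Section~4, the resolvent bounds derived after Lemma~\ref{L1}, and Lemma~\ref{L2}. You also spell out the discriminant steps the paper leaves implicit (weak damping $\Rightarrow$ no real spectrum for $L^0_0$, and $\text{I}(\xi)=0 \Rightarrow \alpha=0$ for $L_0$), and, like the paper, you rely on the unverified Shapiro--Lopatinskii condition~II.
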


\begin{theorem} \label{T2}
  For the isotropic structure, when the Lame constants are $\mu>0$ and
  $\lambda=0$, the system of all eigen-and associated vectors of each of the
  spectral problems $\mathcal{L}_\omega(\alpha)$ and $\mathcal{L}^0_\omega(\alpha)$,
  repeatedly complete in the spaces $W^1_2(\mathcal{D})$ and
  $\overset{0}{W}{^1_2(\mathcal{D})}$, respectively. The one-halves
  of the systems of eigen-and associated vectors of each of the spectral
  problems $\mathcal{L}_\omega(\alpha)$ and $\mathcal{L}^0_\omega(\alpha)$
  are complete and minimal in the spaces $W^1_2(\mathcal{D})$ and $\overset{0}{W}{^1_2(\mathcal{D})}$
  respectively. In addition, the one-half of the eigen-and associated vectors
  of the spectral problems $\mathcal{L}_\omega(\alpha)$ and $\mathcal{L}^0_\omega(\alpha)$
  is minimal (and of course, complete) in the space
  $L_2(\mathcal{D})$.
\end{theorem}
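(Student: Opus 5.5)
This is the Keldysh-type completeness theorem for self-adjoint quadratic pencils, as used by Kostyuchenko--Orazov in \cite{KAGyOMB81}. The plan is to reduce every claim about $\mathcal{L}_\omega(\alpha)$ and $\mathcal{L}^0_\omega(\alpha)$ to the bounded pencils $L_\omega(\alpha)$ and $L^0_\omega(\alpha)$. We then verify the hypotheses of the abstract theorems for such pencils and transfer the conclusions back through the factorizations $\mathcal{L}^{-1}_\omega=P^{\frac12}L^{-1}_\omega P^{\frac12}$ and $\mathcal{L}^0_\omega(\alpha)^{-1}=P_0^{\frac12}L^0_\omega(\alpha)^{-1}P_0^{\frac12}$.

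The map $\xi\mapsto P^{\frac12}\xi$ is an isomorphism of $L_2(\mathcal{D})$ onto $W^1_2(\mathcal{D})$, since $\|g\|_1\cong\|P^{-\frac12}g\|$. Likewise $P_0^{\frac12}$ maps $L_2(\mathcal{D})$ isomorphically onto $\overset{0}{W}{^1_2(\mathcal{D})}$. Chains of eigen- and associated vectors correspond under these maps, so completeness and minimality statements in $W^1_2$ and $\overset{0}{W}{^1_2}$ are equivalent to the same statements for the pencils in $L_2(\mathcal{D})$.

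The first step is to write the pencils in the normalized form
\[
L_\omega(\alpha)=I+\alpha B_0+\alpha^2A_0-P-\omega^2P^{\frac12}\mathcal{R}P^{\frac12}
\]
(up to the sign conventions of Section 3), and analogously for $L^0_\omega(\alpha)$. Here $A_0,B_0,P$ are compact self-adjoint operators belonging to Schatten classes $\mathfrak{G}_q$ with $q>\frac32$, by Section 2; $Q\in\mathfrak{G}_l$ gives the boundary term. The second step is to supply the growth hypothesis on the resolvent for the Keldysh-type argument. The resolvent estimates (\ref{n36}) of Theorem~\ref{T1} give polynomial decay of $L^{-1}_\omega$ on rays in $\Omega_{\varepsilon,N}$, in the weighted form $\|P^{\frac12}L^{-1}_\omega(\alpha)\|\le c|\alpha|^{-1}$. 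Since $\Omega_{\varepsilon,N}$ consists of angles of opening $\pi-2\varepsilon$ with $\varepsilon$ arbitrary, the complementary sectors around the imaginary axis are arbitrarily thin. The finite order of the resolvent, which follows from the Schatten class membership, then lets a Phragm\'en--Lindel\"of argument go through in those thin sectors.

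The third step is double completeness of all root vectors, which we obtain via linearization. The pencil is equivalent to an operator in $L_2\oplus L_2$ whose resolvent is of finite order and is bounded on rays filling all directions except thin angles. Keldysh's theorem then gives completeness of root vectors in $L_2\oplus L_2$, i.e.\ double completeness. Transported by $P^{\frac12}$, this yields double completeness in $W^1_2(\mathcal{D})$, and via $P_0^{\frac12}$ in $\overset{0}{W}{^1_2(\mathcal{D})}$.

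The fourth step concerns the halves. Here we use that the pencils are self-adjoint and that the spectrum is symmetric with respect to $\alpha\mapsto-\bar\alpha$. For $L^0_0$, Lemma~\ref{L2} gives uniform weak damping with $\tau^2<1$. By the Krein--Langer factorization theorem, the pencil then factors as $(\alpha I-Z_+)(\alpha I-Z_-)$ up to an invertible factor, and the root vectors in a half-plane form a complete and minimal system, in fact a basis after the Keldysh transformation. For $\omega\neq0$, and for $L_\omega$, which is not weakly damped but has discriminant $\text{I}(\xi)\ge0$ vanishing only on the four-dimensional kernel from Lemma~\ref{L2}, we instead apply the perturbation results for self-adjoint pencils with finitely many non-definite points used in \cite{KAGyOMB81}. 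These cover pencils that differ from a nonnegative (hyperbolic/elliptic) one by a finite-dimensional or $\mathfrak{G}_q$ perturbation. They give a splitting of the spectrum into two halves, each of whose root systems, with finitely many real eigenvalues and the point $\alpha=0$ allocated appropriately, is complete and minimal in $L_2$; this transfers to the Sobolev spaces.

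Minimality in $L_2(\mathcal{D})$ itself comes from comparing with the biorthogonal system built from the adjoint pencil. The main obstacle is precisely this fourth step: choosing the correct half for the non-real and the finitely many real eigenvalues, in particular handling the Jordan structure at $\alpha=0$ coming from the four eigenvectors of (\ref{n35}), and checking that the hypotheses of the Krein--Langer/Orazov theorem survive the $\omega^2\mathcal{R}$ and boundary perturbations. I would first treat $\omega=0$ directly via the discriminant estimates, then pass to $\omega^2\ge0$ by compact perturbation, counting the eigenvalues crossing the real axis.
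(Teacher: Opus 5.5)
Your proposal follows essentially the same route as the paper: the paper's proof simply transfers root chains through $P^{\frac12}$ (resp.\ $P_0^{\frac12}$), invokes Theorem~2.3 of \cite{KAGyOMB81} for double completeness and for completeness and minimality of the halves, and uses Remark~2.3 there for minimality in $L_2(\mathcal{D})$; your isomorphism argument, the resolvent estimates (\ref{n36}), the linearization and the biorthogonal-system remark unpack exactly these cited ingredients. The one real difference is your fourth-step detour (weak damping and Krein--Langer factorization for $L^0_0$, then continuation in $\omega^2$), which is unnecessary because the paper applies the abstract theorem directly at each fixed $\omega^2\ge0$, using only the structure of the pencils from Section~2 and the estimates of Theorem~\ref{T1}, so Lemma~\ref{L2} plays no role in this proof.
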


\textbf{Proof.} The first assertions follows from theorem~2.3
  \cite{KAGyOMB81} and from the manner in which the eigen-and associated vectors
  of the spectral problems $\mathcal{L}_\omega(\alpha) \, (\mathcal{L}^0_\omega(\alpha))$
  and of the quadratic pencils $L_\omega(\alpha)$ (respectively $L^0_\omega(\alpha)$).
  are related themselves. The last assertion is obtained by using the remark 2.3 \cite{KAGyOMB81}.\\

\bigskip

\textbf{5.~Some applications to the spectral problems
  $\boldsymbol{\mathcal{L}_\omega(\alpha)}$ and $\boldsymbol{\mathcal{L}^0_\omega(\alpha)}$}\\

\textbf{5.1~Determination of the roots vectors for the spectral
  problem $\boldsymbol{\mathcal{L}_0(\omega)}$}

\bigskip

  In the static case, when $\omega=0$, according to
  theorem~\ref{T1}, the spectral problem $\mathcal{L}_0$, when the
  Lame constants are $\mu>0$ and  $\lambda=0$, has on the real axis only one eigenvalue
  $\alpha=0$, to which there correspond four independent
  eigenvectors. the Jordan chains corresponding to this eigenvalue $\alpha=0$,
  can be exhibited explicitly, as was done in the paper \cite{KAGyOMB81},
  when the Lame constants are $\mu>0$ and $\lambda>0$.

\begin{theorem} \label{T3}
  For the isotropic structure, when the
  Lame constants are $\mu>0$ and $\lambda=0$, the spectral problem
  $\mathcal{L}_0(\alpha)$, at the point $\alpha=0$, there correspond two
  chains of eigen-and associated vectors of length 2 and two chains
  of eigen-and associated vectors of length 4. The projections of
  these eigen-and associated vectors on to space $L_2(\mathcal{D})$
  have the form (the index 0 corresponds to the eigenvector, and the
  subsequent indices correspond to associated vectors):
  \begin{align} \label{n37}
    \begin{matrix}
    &  v_0=(1,0,0), &  v_1=(0,x_3,-x_2);    &\\
\\
    u_0=(0,1,0),    &  u_1=i(-x_2+k_2,0,0), &  u_2=(0,x_3,-x_2), &  u_3=(u_{31},0,0);\\
\\
    w_0=(0,0,1),  &  w_1=i(-x_3+k_3,0,0), &  w_2=(0,x_3,-x_2), &  w_3=(w_{31},0,0);\\
\\
    &  g_0=(0,x_3,-x_2),  &  g_1=(g_{11},0,0),  &
    \end{matrix}
  \end{align}
  where the constants $k_2$ and $k_3$ depend on the domain $\mathcal{D}$, and
  are defined by:
\[
  \int\limits_\mathcal{D}(x_2-k_2)\,dx=0 \; \; \text{and} \; \; \int\limits_\mathcal{D}(x_3-k_3)\,dx=0,
\]
  the functions $u_{31}, \, w_{31}\; \text{and} \; \; g_{11}$ are solutions of the following
  boundary problems:

  \begin{align*}
  &   \left \{
          \begin{matrix}
          \qquad\mu\left(D^2_2+D^2_3\right)u_{31} & = & -2i\mu(x_2-k_2) \; \: \text{in} \: \; \mathcal{D}, \\
          \left.{\mu(n_2D_2+n_3D_3)u_{31}}\right|_{\partial\mathcal{D}} & = & -i\mu(n_2x_3-n_3x_2),
          \end{matrix}
       \right.\\
\\
  &   \left \{
          \begin{matrix}
          \qquad\mu\left(D^2_2+D^2_3\right)w_{31} & = & -2i\mu(x_3-k_3) \; \: \text{in} \: \; \mathcal{D}, \\
          \left.{\mu(n_2D_2+n_3D_3)w_{31}}\right|_{\partial\mathcal{D}} & = & -i\mu(n_2x_3-n_3x_2), \notag
          \end{matrix}
       \right.\\
\\
&   \left\{
          \begin{matrix}
          \qquad\mu\left(D^2_2+D^2_3\right)g_{11} & = & 0 \qquad \text{in} \qquad \mathcal{D},  \\
          \left.{\mu(n_2D_2+n_3D_3)g_{11}}\right|_{\partial\mathcal{D}} & = & -i\mu(n_2x_3-n_3x_2),
          \end{matrix}
       \right.
  \end{align*}
\end{theorem}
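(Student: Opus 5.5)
The plan is to verify the chain relations (\ref{n19})--(\ref{n20}) directly at $\alpha_0=0$, $\omega=0$. Here $\mathcal{C}_0(\alpha)=\alpha^2\mathcal{A}-\alpha\mathcal{B}+\mathcal{C}$ and $\mathcal{U}(\alpha)=\mathcal{M}+i\alpha\mathcal{N}$. At $\alpha_0=0$ we have $\frac{\partial}{\partial\alpha}\mathcal{C}_0=-\mathcal{B}$, $\frac12\frac{\partial^2}{\partial\alpha^2}\mathcal{C}_0=\mathcal{A}$ and $\frac{\partial}{\partial\alpha}\mathcal{U}=i\mathcal{N}$. So a chain $y_0,\dots,y_p$ must satisfy
\[
\mathcal{C}y_l-\mathcal{B}y_{l-1}+\mathcal{A}y_{l-2}=0 \ \text{in}\ \mathcal{D},\qquad
\mathcal{M}y_l+i\mathcal{N}y_{l-1}=0 \ \text{on}\ \partial\mathcal{D}.
\]
By Lemma~\ref{L2}, the eigenvectors are exactly the span of $v_0,u_0,w_0,g_0$ from (\ref{n35}). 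So each step is a Neumann-type problem for the self-adjoint operator $\mathcal{C}$, and its kernel is this four-dimensional space.

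The existence part is a computation, done chain by chain in the order listed in (\ref{n37}). The structure of $\mathcal{B}$, $\mathcal{N}$ and $\mathcal{A}$ makes the equations decouple. $\mathcal{B}$ maps first components to $(0,D_2\cdot,D_3\cdot)$ and $(v_2,v_3)$ to the first component $D_2v_2+D_3v_3$. $\mathcal{N}$ only produces a first component $\mu(n_2v_2+n_3v_3)$.
\begin{itemize}
\item For $v_0$: $\mathcal{B}v_0=0$ and $\mathcal{N}v_0=0$. Hence $v_1$ may be taken from the kernel, and we choose $v_1=g_0$.
\item For $u_0$: $\mathcal{C}u_1=\mathcal{B}u_0=0$, while on the boundary $\mathcal{M}u_1=-i\mathcal{N}u_0=-i\mu(n_2,0,0)$. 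This is solved by $u_1=i(-x_2+k_2,0,0)$. The constant $k_2$ is free at this step; it is fixed by requiring the compatibility condition at step $3$.
\item At step 2, $\mathcal{B}u_1$ has only components $2,3$ equal to $(i\mu D_2,i\mu D_3)$ applied to $i(-x_2+k_2)$, i.e. $(0,\mu,0)$. This is balanced by $\mathcal{A}u_0=(0,\mu,0)$ together with the boundary term, so $u_2$ can be taken as $g_0$.
\item At step 3 only the first component is nontrivial. This gives exactly the stated Neumann problem for $u_{31}$ with right-hand side $-2i\mu(x_2-k_2)$.
\item The $w$-chain is symmetric in $x_2\leftrightarrow x_3$. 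For $g_0$ we have $\mathcal{B}g_0=0$ in $\mathcal{D}$ (since $D_2x_3+D_3(-x_2)=0$), and $\mathcal{N}g_0=\mu(n_2x_3-n_3x_2,0,0)$. This gives the problem for $g_{11}$.
\end{itemize}
For each scalar Neumann problem $\mu\Delta f=F$, $\mu\partial_nf=G$, I would check solvability via the Gauss condition $\int_{\mathcal{D}}F\,dx=\int_{\partial\mathcal{D}}G\,ds$. Two facts make this work. First, $\int_{\partial\mathcal{D}}(n_2x_3-n_3x_2)\,ds=\int_{\mathcal{D}}(D_2x_3-D_3x_2)\,dx=0$. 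Second, the definition of $k_2,k_3$ makes $\int_{\mathcal{D}}(x_2-k_2)\,dx=0$. So the three problems in the statement are solvable.

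The second part is maximality: showing the chains cannot be continued, and that these lengths $2,4,4,2$ are the true partial multiplicities. For this I would use the Fredholm alternative through the Green identity (\ref{n24}). A next vector $y_{p+1}$ exists only if, for every kernel element $\varphi\in\{v_0,u_0,w_0,g_0\}$,
\[
\bigl(\mathcal{B}y_p-\mathcal{A}y_{p-1},\varphi\bigr)+\bigl(i\mathcal{N}y_p,\varphi\bigr)_0=0,
\]
because $\int_{\mathcal{D}}\mathcal{C}y\,\overline{\varphi}\,dx+\int_{\partial\mathcal{D}}\mathcal{M}y\,\overline{\varphi}\,ds=E_0(y,\varphi)-(y,\varphi)=0$ for $\varphi$ in the kernel.
\begin{itemize}
\item For the chains ending in $v_1=g_0$ and in $g_1$, I expect that testing against $\varphi=g_0$ (respectively $v_0$) gives a nonzero quantity, essentially $\mu\int_{\mathcal{D}}(x_2^2+x_3^2)\,dx$ after integration by parts with (\ref{n33}).
\item For $u_3,w_3$, testing against $u_0$ or $w_0$ should leave $\mu\int_{\mathcal{D}}(x_2-k_2)^2\,dx\neq0$ (respectively with $x_3$).
\end{itemize}
These nonvanishing integrals are the main obstacle. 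The delicate point is that the choice of $y_l$ inside the chain is only up to kernel elements, so one has to show that no modification by the kernel rescues solvability. I would handle this by computing the Gram-type matrix of the pairing on the root subspace and showing it is nondegenerate.

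Finally, to confirm that no other root vectors exist, I would compare the total algebraic multiplicity $2+4+4+2=12$ with the count coming from the reduction to the pencil $L_0(\alpha)$ and Theorem~\ref{T1}. The projection statement then follows from the relation $\mathcal{L}^{-1}=P^{\frac12}L^{-1}P^{\frac12}$.
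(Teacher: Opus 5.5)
Your existence computations and the Fredholm-alternative test through the Green identity (\ref{n24}) are the paper's method. The maximality step for the two chains of length $2$, however, fails as you set it up, because you pair each chain with the wrong kernel element.

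Continuing the $v$-chain and pairing with $g_0$ gives identically zero. For any kernel element $v_1$ one has $\mathcal{B}v_1=0$. The vectors $\mathcal{A}v_0=\mu(2,0,0)$ and $\mathcal{N}v_1$ have only first components, and the first component of $g_0$ vanishes. Continuing the $g$-chain and pairing with $v_0$ also gives zero, since $\mathcal{B}g_1-\mathcal{A}g_0=\mu(0,\,iD_2g_{11}-x_3,\,iD_3g_{11}+x_2)$ has no first component and $\mathcal{N}g_1=0$.

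These pairings are the off-diagonal entries of the second-order obstruction form on the kernel. That form is diagonal in the basis $v_0,u_0,w_0,g_0$, and kernel elements added to $v_1$ or $g_1$ do not change it. As in the paper, each chain is obstructed by its own eigenvector. Pairing with $v_0$ gives $-2\mu|\mathcal{D}|$, an area rather than a second moment; this is the incompatibility of (\ref{n38}). Pairing with $g_0$ gives $\mu(\|D_2g_{11}\|^2+\|D_3g_{11}\|^2-\|x_2\|^2-\|x_3\|^2)$.

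That last quantity is not, up to sign, $\mu\int_{\mathcal{D}}(x_2^2+x_3^2)\,dx$. The warping term enters with the opposite sign, so showing the quantity is nonzero is the real content of the torsion case, and your plan does not supply it. The paper uses the Neumann problem for $g_{11}$ and Green's formula to identify $\|\nabla g_{11}\|^2$ with the pairing of $\nabla g_{11}$ and $(x_3,-x_2)$. It then applies Cauchy--Schwarz, which is strict because $(x_3,-x_2)$ is not a gradient: its scalar curl $D_2(-x_2)-D_3x_3$ equals $-2$. Equivalently, writing $g_{11}=-i\chi$ with $\chi$ real, the quantity equals $-\mu\|(x_3,-x_2)-\nabla\chi\|^2<0$.

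Two smaller points:
\begin{enumerate}
\item The boundary term in your compatibility condition should be $(-i\mathcal{N}y_p,\varphi)_0$. This is harmless here, since it vanishes in every pairing actually used.
\item Theorem~\ref{T1} gives no algebraic multiplicity at $\alpha=0$, so your final count has nothing to be checked against. It is also unnecessary once the chain lengths are fixed.
\end{enumerate}

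Your Gram-matrix idea does genuinely improve on the paper for the $u$/$w$ block. The paper tests the $u$-chain only against $u_0$ and the $w$-chain only against $w_0$. Ruling out longer chains from combinations $bu_0+cw_0$ needs the matrix $((x_i-k_i,x_j-k_j))_{i,j=2,3}$ to be nondegenerate. This holds because $x_2-k_2$ and $x_3-k_3$ are linearly independent on $\mathcal{D}$.
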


\bigskip
\textbf{Proof.} In the lemma~\ref{L2} has bee proved that the
  spectral problem $\mathcal{L}_(\alpha)$ at the point $\alpha=0$ does have
  other eigenvectors in addition to $v_0, \, u_0, \, w_0 \; \text{and} \; g_0$.
  we carry out the computation for all the eigenvectors, when the
  Lame constants are $\mu>0$ and $\lambda=0$.
\bigskip
\\
  a) Let $v_1$~--- be an associated vector to be eigenvector $v_0$,
  then $v_1$ can be defined by the system of equations:
\[
  \mathcal{C}_0(0)v_1+\frac{\partial}{\partial\alpha}\mathcal{C}_0(0)v_0=
\]
\[
  =-\mu \! \!
\begin{array} \lgroup{ccc}\rgroup
   {\! \! \!}  D^2_2+ D^2_3  &        0       &         0                 \\
                    0        &  2D^2_2+D^2_3  &      D_2D_3                \\
                    0        &     D_2D_3     &   D^2_2+2D^2_3  {\! \! \!}
\end{array} \! \! \! \!
\begin{array}\lgroup{c}\rgroup
  {\! \! \!}  v_{11} {\! \! \!}  \\
  {\! \! \!}  v_{12} {\! \! \!}  \\
  {\! \! \!}  v_{13} {\! \! \!}
\end{array} \!
  \! -i\mu \! \!
\begin{array}\lgroup{ccc}\rgroup
   {\! \! \!} 0   &   D_2   &   D_3 {\! \! \!}  \\
   {\! \! \!} D_2 &    0    &    0  {\! \! \!}   \\
   {\! \! \!} D_3 &    0    &    0  {\! \! \!}
\end{array} \! \! \! \!
\begin{array}\lgroup{c}\rgroup
   {\! \! \!} 1 {\! \! \!} \\
   {\! \! \!} 0 {\! \! \!} \\
   {\! \! \!} 0 {\! \! \!}
\end{array} \! =
\]

\medskip
\[
  =-\mu \!
\begin{array}\lgroup{c}\rgroup
    \left(D^2_2+D^2_3\right)v_{11}  \\
   {\! \! \!} 2D^2_2v_{12}+D^2_3v_{12}+D_2D_3v_{13} {\! \! \!} \\
   {\! \! \!} D_2D_3v_{12}+D^2_2v_{13}+2D^2_3v_{13} {\! \! \!}
\end{array} {\! \! } = \!
\begin{array}\lgroup{c}\rgroup
  {\! \! \!} 0 {\! \! \!} \\
  {\! \! \!} 0 {\! \! \!} \\
  {\! \! \!} 0 {\! \! \!}
\end{array};
\]
\[
  \left.{\mathcal{U}(0)v_1+\frac{\partial}{\partial\alpha}\mathcal{U}(0)v_0}
  \right |_{\partial\mathcal{D}}=
\]
\[
  = \! \mu \!
\begin{array} \lgroup{ccc}\rgroup
  {\! \! \!} n_2D_2+n_3D_3    &           0        &       0            \\
         0          &   2n_2D_2+n_3D_3   &    n_3D_2            \\
         0          &        n_2D_3      & n_2D_2+2n_3D_3 {\! \! \!}
\end{array} \! \! \! \!
\begin{array}\lgroup{c}\rgroup
  {\! \! \!} v_{11} {\! \! \!} \\
  {\! \! \!} v_{12} {\! \! \!} \\
  {\! \! \!} v_{13} {\! \! \!}
\end{array}
   \! \! + i\mu \!
\begin{array} \lgroup{ccc}\rgroup
  {\! \! \!} 0 &    n_2    &    n_3 {\! \! \!} \\
  {\! \! \!} 0 &     0     &   0    {\! \! \!} \\
  {\! \! \!} 0 &     0     &   0    {\! \! \!}
\end{array} \! \! \! \!
\begin{array}\lgroup{c}\rgroup
  {\! \! \!} 1 {\! \! \!}  \\
  {\! \! \!} 0 {\! \! \!}  \\
  {\! \! \!} 0 {\! \! \!}
\end{array} {\! \!} =
\]

\medskip
\[
   {\! \! } = \mu {\! \! }
\begin{array}\lgroup{c}\rgroup
                          (n_2D_2+n_3D_3)v_{11}                  \\
   {\! \! \!} 2n_2D_2v_{12}+n_3D_3v_{12}+n_3D_2v_{13}  {\! \! \!} \\
   {\! \! \!} n_2D_3v_{12}+n_2D_2v_{13}+2n_3D_3v_{13} {\! \! \!}
\end{array}_{\!0} {\! \!}={\! \!}
\begin{array}\lgroup{c}\rgroup
  {\! \! \!} 0 {\! \! \!}  \\
  {\! \! \!} 0 {\! \! \!}  \\
  {\! \! \!} 0 {\! \! \!}
\end{array}.
\]\\
  From this system of equations, we obtain that the associated vector $v_1$
  has the form:
\[
  v_1=(0,x_3,-x_2).
\]

  Let $v_2$~--- be the next associated eigenvector, then $v_2$ can be defined
  by the system of equations:

\[
  \mathcal{C}_0(0)v_2+\frac{\partial}{\partial\alpha}\mathcal{C}_0(0)v_1+
  \frac{1}{2}\frac{\partial^2}{\partial\alpha^2}\mathcal{C}_0(0)v_0=
\]
\[
  =-\mu \! \!
\begin{array} \lgroup{ccc}\rgroup
   {\! \! \!}  D^2_2+ D^2_3  &        0       &         0                 \\
                    0        &  2D^2_2+D^2_3  &      D_2D_3                \\
                    0        &     D_2D_3     &   D^2_2+2D^2_3  {\! \! \!}
\end{array} \! \! \! \!
\begin{array}\lgroup{c}\rgroup
  {\! \! \!}  v_{21} {\! \! \!}  \\
  {\! \! \!}  v_{22} {\! \! \!}  \\
  {\! \! \!}  v_{23} {\! \! \!}
\end{array} \!
  \! -i\mu \! \!
\begin{array}\lgroup{ccc}\rgroup
   {\! \! \!} 0   &   D_2   &   D_3 {\! \! \!}  \\
   {\! \! \!} D_2 &    0    &    0  {\! \! \!}   \\
   {\! \! \!} D_3 &    0    &    0  {\! \! \!}
\end{array} \! \! \! \!
\begin{array}\lgroup{c}\rgroup
   {\! \! \!}            0   {\! \! \!} \\
   {\! \! \!}           ix_3 {\! \! \!} \\
   {\! \! \! \! \! \!} -ix_2 {\! \! \!}
\end{array} \! +
\]

\medskip
\[
  +\, \mu {\! \!}
\begin{array}\lgroup{ccc}\rgroup
   {\! \! \!} 2  &   0  &   0  {\! \! \!} \\
   {\! \! \!} 0  &   1  &   0  {\! \! \!} \\
   {\! \! \!} 0  &   0  &   1  {\! \! \!}
\end{array} {\! \! \! \! \!}
\begin{array}\lgroup{c}\rgroup
  {\! \! \!} 1 {\! \! \!}  \\
  {\! \! \!} 0 {\! \! \!}  \\
  {\! \! \!} 0 {\! \! \!}
\end{array} {\! \!}
  =-\mu {\! \!}
\begin{array}\lgroup{c}\rgroup
    \left(D^2_2+D^2_3\right)v_{21}-2  \\
   {\! \! \!} 2D^2_2v_{22}+D^2_3v_{22}+D_2D_3v_{23} {\! \! \!} \\
   {\! \! \!} D_2D_3v_{22}+D^2_2v_{23}+2D^2_3v_{23} {\! \! \!}
\end{array} {\! \! } = {\! \!}
\begin{array}\lgroup{c}\rgroup
  {\! \! \!} 0 {\! \! \!} \\
  {\! \! \!} 0 {\! \! \!} \\
  {\! \! \!} 0 {\! \! \!}
\end{array}\! ;
\]

\bigskip
\[
  \left.{\mathcal{U}(0)v_2+\frac{\partial}{\partial\alpha}\mathcal{U}(0)v_1}
  \right|_{\partial\mathcal{D}}=
\]
\[
  = \! \mu \!
\begin{array} \lgroup{ccc}\rgroup
  {\! \! \!} n_2D_2+n_3D_3    &           0        &       0            \\
         0          &   2n_2D_2+n_3D_3   &    n_3D_2            \\
         0          &        n_2D_3      & n_2D_2+2n_3D_3 {\! \! \!}
\end{array} \! \! \! \!
\begin{array}\lgroup{c}\rgroup
  {\! \! \!} v_{21} {\! \! \!} \\
  {\! \! \!} v_{22} {\! \! \!} \\
  {\! \! \!} v_{23} {\! \! \!}
\end{array}
   \! \! + i\mu \!
\begin{array} \lgroup{ccc}\rgroup
  {\! \! \!} 0 &    n_2    &    n_3 {\! \! \!} \\
  {\! \! \!} 0 &     0     &   0    {\! \! \!} \\
  {\! \! \!} 0 &     0     &   0    {\! \! \!}
\end{array} \! \! \! \!
\begin{array}\lgroup{c}\rgroup
   {\! \! \!}            0   {\! \! \!} \\
   {\! \! \!}           ix_3 {\! \! \!} \\
   {\! \! \! \! \! \!} -ix_2 {\! \! \!}
\end{array}{\! \!} =
\]

\medskip
\[
   {\! \! } = \mu {\! \! }
\begin{array}\lgroup{c}\rgroup
      {\! \! \!}  (n_2D_2+n_3D_3)v_{21}-i(n_2x_3-n_3x_2) {\! \! \!}   \\
                  2n_2D_2v_{22}+n_3D_3v_{22}+n_3D_2v_{23}             \\
                  n_2D_3v_{22}+n_2D_2v_{23}+2n_3D_3v_{23}
\end{array}_{\!0} {\! \!}={\! \!}
\begin{array}\lgroup{c}\rgroup
  {\! \! \!} 0 {\! \! \!}  \\
  {\! \! \!} 0 {\! \! \!}  \\
  {\! \! \!} 0 {\! \! \!}
\end{array}.
\]\\
  From this system of equations, we examine the next boundary problem:
\begin{align} \label{n38}
      \left \{
        \begin{matrix}
          \qquad\mu\left(D^2_2+D^2_3\right)v_{21} & = &   2\mu
          \\
          \left.{\mu(n_2D_2+n_3D_3)v_{21}}\right|_{\partial\mathcal{D}} & = & i\mu(n_2x_3-n_3x_2).
        \end{matrix}
       \right.
\end{align}
  Thus, in order to find $v_{21}$, one has to prove the solvability of the problem (\ref{n38}).
  This problem is solvable only, when we have the next equality:
\[
  -2\mu\!\int\limits_{\mathcal{D}}\! dx+(i\mu(n_2x_3-n_3x_2),1)_0=0.
\]
  From the relation (\ref{n33}) we obtain that $(i\mu(n_2x_3-n_3x_2),1)_0=0$, then
\[
  -2\mu\!\int\limits_{\mathcal{D}}\! dx+(i\mu(n_2x_3-n_3x_2),1)_0=-2\mu\!\int
  \limits_{\mathcal{D}}\! dx<0.
\]
  For this inequality, the boundary problem (\ref{n38})
  and at the same time the system of equations for the vector are not
  solvable. Consequently, the chain of the vectors:
\[
  v_0=(1,0,0), \; \; \emph{and} \, \; \; v_1=(0,x_3,-x_2)\text{~---} \,\emph{is maximal}.
\]\\
\medskip
   b) Let $u_1$~--- be an associated vector to be eigenvector $u_ 0$, then $u_1$
   can be defined by the system of equations:
\[
  \mathcal{C}_0(0)u_1+\frac{\partial}{\partial\alpha}\mathcal{C}_0(0)u_0=
\]
\[
  =-\mu \! \!
\begin{array} \lgroup{ccc}\rgroup
   {\! \! \!}  D^2_2+ D^2_3  &        0       &         0                 \\
                    0        &  2D^2_2+D^2_3  &      D_2D_3                \\
                    0        &     D_2D_3     &   D^2_2+2D^2_3  {\! \! \!}
\end{array} \! \! \! \!
\begin{array}\lgroup{c}\rgroup
  {\! \! \!}  u_{11} {\! \! \!}  \\
  {\! \! \!}  u_{12} {\! \! \!}  \\
  {\! \! \!}  u_{13} {\! \! \!}
\end{array} \!
  \! -i\mu \! \!
\begin{array}\lgroup{ccc}\rgroup
   {\! \! \!} 0   &   D_2   &   D_3 {\! \! \!}  \\
   {\! \! \!} D_2 &    0    &    0  {\! \! \!}   \\
   {\! \! \!} D_3 &    0    &    0  {\! \! \!}
\end{array} \! \! \! \!
\begin{array}\lgroup{c}\rgroup
   {\! \! \!} 0 {\! \! \!} \\
   {\! \! \!} 1 {\! \! \!} \\
   {\! \! \!} 0 {\! \! \!}
\end{array} \! =
\]

\medskip
\[
  =-\mu \!
\begin{array}\lgroup{c}\rgroup
    \left(D^2_2+D^2_3\right)u_{11}  \\
   {\! \! \!} 2D^2_2u_{12}+D^2_3u_{12}+D_2D_3u_{13} {\! \! \!} \\
   {\! \! \!} D_2D_3u_{12}+D^2_2u_{13}+2D^2_3u_{13} {\! \! \!}
\end{array} {\! \! } = \!
\begin{array}\lgroup{c}\rgroup
  {\! \! \!} 0 {\! \! \!} \\
  {\! \! \!} 0 {\! \! \!} \\
  {\! \! \!} 0 {\! \! \!}
\end{array};
\]

\bigskip
\[
  \left.{\mathcal{U}(0)u_1+\frac{\partial}{\partial\alpha}\mathcal{U}(0)u_0}
  \right |_{\partial\mathcal{D}}=
\]
\[
  = \! \mu \!
\begin{array} \lgroup{ccc}\rgroup
  {\! \! \!} n_2D_2+n_3D_3    &           0        &       0            \\
         0          &   2n_2D_2+n_3D_3   &    n_3D_2            \\
         0          &        n_2D_3      & n_2D_2+2n_3D_3 {\! \! \!}
\end{array} \! \! \! \!
\begin{array}\lgroup{c}\rgroup
  {\! \! \!} u_{11} {\! \! \!} \\
  {\! \! \!} u_{12} {\! \! \!} \\
  {\! \! \!} u_{13} {\! \! \!}
\end{array}
   \! \! + i\mu \!
\begin{array} \lgroup{ccc}\rgroup
  {\! \! \!} 0 &    n_2    &    n_3 {\! \! \!} \\
  {\! \! \!} 0 &     0     &   0    {\! \! \!} \\
  {\! \! \!} 0 &     0     &   0    {\! \! \!}
\end{array} \! \! \! \!
\begin{array}\lgroup{c}\rgroup
  {\! \! \!} 0 {\! \! \!}  \\
  {\! \! \!} 1 {\! \! \!}  \\
  {\! \! \!} 0 {\! \! \!}
\end{array} {\! \!} =
\]

\medskip
\[
   {\! \! } = \mu {\! \! }
\begin{array}\lgroup{c}\rgroup
                     (n_2D_2+n_3D_3)u_{11}+in_2                  \\
   {\! \! \!} 2n_2D_2u_{12}+n_3D_3u_{12}+n_3D_2u_{13}  {\! \! \!} \\
   {\! \! \!} n_2D_3u_{12}+n_2D_2u_{13}+2n_3D_3u_{13} {\! \! \!}
\end{array}_{\!0} {\! \!}={\! \!}
\begin{array}\lgroup{c}\rgroup
  {\! \! \!} 0 {\! \! \!}  \\
  {\! \! \!} 0 {\! \! \!}  \\
  {\! \! \!} 0 {\! \! \!}
\end{array}.
\bigskip
\]
  From this system of equations, we obtain that the associated vector $u_1$ has the form:
\[
  u_1=i(-x_2+k_2,0,0), \, \emph{where}\; k_2 \text{~---} \, \emph{constant depends on the domain}
  \: \mathcal{D}.
\]

  Let $u_2$~--- be the next associated eigenvector, then $u_2$ can be defined
  by the system of equations:
\[
  \mathcal{C}_0(0)u_2+\frac{\partial}{\partial\alpha}\mathcal{C}_0(0)u_1+
  \frac{1}{2}\frac{\partial^2}{\partial\alpha^2}\mathcal{C}_0(0)u_0=
\]
\[
  =-\mu \! \!
\begin{array} \lgroup{ccc}\rgroup
   {\! \! \!}  D^2_2+ D^2_3  &        0       &         0                 \\
                    0        &  2D^2_2+D^2_3  &      D_2D_3                \\
                    0        &     D_2D_3     &   D^2_2+2D^2_3  {\! \! \!}
\end{array} \! \! \! \!
\begin{array}\lgroup{c}\rgroup
  {\! \! \!}  u_{21} {\! \! \!}  \\
  {\! \! \!}  u_{22} {\! \! \!}  \\
  {\! \! \!}  u_{23} {\! \! \!}
\end{array} \!
  \! -i\mu \! \!
\begin{array}\lgroup{ccc}\rgroup
   {\! \! \!} 0   &   D_2   &   D_3 {\! \! \!}  \\
   {\! \! \!} D_2 &    0    &    0  {\! \! \!}   \\
   {\! \! \!} D_3 &    0    &    0  {\! \! \!}
\end{array} {\! \! \! \!}
\begin{array}\lgroup{c}\rgroup
   {\! \! \! \! \!}  -\!i(x_2-k_2)   {\! \! \!} \\
                             0                  \\
                             0
\end{array} \! +
\]

\medskip
\[
  +\, \mu {\! \!}
\begin{array}\lgroup{ccc}\rgroup
   {\! \! \!} 2  &   0  &   0  {\! \! \!} \\
   {\! \! \!} 0  &   1  &   0  {\! \! \!} \\
   {\! \! \!} 0  &   0  &   1  {\! \! \!}
\end{array} {\! \! \! \! \!}
\begin{array}\lgroup{c}\rgroup
  {\! \! \!} 0 {\! \! \!}  \\
  {\! \! \!} 1 {\! \! \!}  \\
  {\! \! \!} 0 {\! \! \!}
\end{array} {\! \!}
  =-\mu {\! \!}
\begin{array}\lgroup{c}\rgroup
                   \left(D^2_2+D^2_3\right)u_{21}    \\
   {\! \! \!} 2D^2_2u_{22}+D^2_3u_{22}+D_2D_3u_{23} {\! \! \!} \\
   {\! \! \!} D_2D_3u_{22}+D^2_2u_{23}+2D^2_3u_{23} {\! \! \!}
\end{array} {\! \! } = {\! \!}
\begin{array}\lgroup{c}\rgroup
  {\! \! \!} 0 {\! \! \!} \\
  {\! \! \!} 0 {\! \! \!} \\
  {\! \! \!} 0 {\! \! \!}
\end{array}\! ;
\]
\[
  \left.{\mathcal{U}(0)u_2+\frac{\partial}{\partial\alpha}\mathcal{U}(0)u_1}
  \right|_{\partial\mathcal{D}}=
\]
\[
  = \! \mu \!
\begin{array} \lgroup{ccc}\rgroup
  {\! \! \!} n_2D_2+n_3D_3    &           0        &       0            \\
         0          &   2n_2D_2+n_3D_3   &    n_3D_2            \\
         0          &        n_2D_3      & n_2D_2+2n_3D_3 {\! \! \!}
\end{array} \! \! \! \!
\begin{array}\lgroup{c}\rgroup
  {\! \! \!} u_{21} {\! \! \!} \\
  {\! \! \!} u_{22} {\! \! \!} \\
  {\! \! \!} u_{23} {\! \! \!}
\end{array}+
\]
\medskip
\[
    + \, i\mu \!
\begin{array} \lgroup{ccc}\rgroup
  {\! \! \!} 0 &    n_2    &    n_3 {\! \! \!} \\
  {\! \! \!} 0 &     0     &   0    {\! \! \!} \\
  {\! \! \!} 0 &     0     &   0    {\! \! \!}
\end{array} \! \! \! \!
\begin{array}\lgroup{c}\rgroup
   {\! \! \! \! \!}  -\!i(x_2-k_2)   {\! \! \!} \\
                             0                  \\
                             0
\end{array} \!
   {\! \! } = \mu {\! \! }
\begin{array}\lgroup{c}\rgroup
                         (n_2D_2+n_3D_3)u_{21}   \\
  {\! \! \!}   2n_2D_2u_{22}+n_3D_3u_{22}+n_3D_2u_{23}  {\! \! \!}  \\
  {\! \! \!}   n_2D_3u_{22}+n_2D_2u_{23}+2n_3D_3u_{23} {\! \! \!}
\end{array}_{\!0} {\! \!}={\! \!}
\begin{array}\lgroup{c}\rgroup
  {\! \! \!} 0 {\! \! \!}  \\
  {\! \! \!} 0 {\! \! \!}  \\
  {\! \! \!} 0 {\! \! \!}
\end{array}.
\]\\
  From this system of equations, we obtain that the associated vector $u_2$
  has the form:
\[
  u_2=(0,x_3,-x_2).
\]
  Let $u_3$~--- be the next associated eigenvector, then $u_3$ can be defined
  by the system of equations:

\[
  \mathcal{C}_0(0)u_3+\frac{\partial}{\partial\alpha}\mathcal{C}_0(0)u_2+
  \frac{1}{2}\frac{\partial^2}{\partial\alpha^2}\mathcal{C}_0(0)u_1=
\]
\[
  =-\mu \! \!
\begin{array} \lgroup{ccc}\rgroup
   {\! \! \!}  D^2_2+ D^2_3  &        0       &         0                 \\
                    0        &  2D^2_2+D^2_3  &      D_2D_3                \\
                    0        &     D_2D_3     &   D^2_2+2D^2_3  {\! \! \!}
\end{array} \! \! \! \!
\begin{array}\lgroup{c}\rgroup
  {\! \! \!}  u_{31} {\! \! \!}  \\
  {\! \! \!}  u_{32} {\! \! \!}  \\
  {\! \! \!}  u_{33} {\! \! \!}
\end{array} \!
  \! -i\mu \! \!
\begin{array}\lgroup{ccc}\rgroup
   {\! \! \!} 0   &   D_2   &   D_3 {\! \! \!}  \\
   {\! \! \!} D_2 &    0    &    0  {\! \! \!}   \\
   {\! \! \!} D_3 &    0    &    0  {\! \! \!}
\end{array} {\! \! \! \!}
\begin{array}\lgroup{c}\rgroup
  {\! \! \!}         0  {\! \! \!}  \\
  {\! \! \!}        x_3 {\! \! \!}  \\
  {\! \! \! \! \!} -x_2 {\! \! \!}
\end{array} {\! \!}+
\]
\smallskip
\[
  +\, \mu {\! \!}
\begin{array}\lgroup{ccc}\rgroup
   {\! \! \!} 2  &   0  &   0  {\! \! \!} \\
   {\! \! \!} 0  &   1  &   0  {\! \! \!} \\
   {\! \! \!} 0  &   0  &   1  {\! \! \!}
\end{array} {\! \! \! \! \!}
\begin{array}\lgroup{c}\rgroup
  {\! \! \! \! \!} -i(x_2-k_2) {\! \! \!}  \\
  {\! \! \!}         0     {\! \! \!}  \\
  {\! \! \!}         0     {\! \! \!}
\end{array} {\! \!}
  =-\mu {\! \!}
\begin{array}\lgroup{c}\rgroup
   {\! \! \! \!} \left(D^2_2+D^2_3\right)u_{31}+2i(x_2-k_2) {\! \! \! \!}   \\
   {\! \! \!} 2D^2_2u_{32}+D^2_3u_{32}+D_2D_3u_{33} {\! \! \!} \\
   {\! \! \!} D_2D_3u_{32}+D^2_2u_{33}+2D^2_3u_{33} {\! \! \!}
\end{array} {\! \! } = {\! \!}
\begin{array}\lgroup{c}\rgroup
  {\! \! \!} 0 {\! \! \!} \\
  {\! \! \!} 0 {\! \! \!} \\
  {\! \! \!} 0 {\! \! \!}
\end{array}\! ;
\]

\bigskip
\[
  \left.{\mathcal{U}(0)u_3+\frac{\partial}{\partial\alpha}\mathcal{U}(0)u_2}
  \right|_{\partial\mathcal{D}}=
\]
\[
  = \! \mu \!
\begin{array} \lgroup{ccc}\rgroup
  {\! \! \!} n_2D_2+n_3D_3    &           0        &       0            \\
         0          &   2n_2D_2+n_3D_3   &    n_3D_2            \\
         0          &        n_2D_3      & n_2D_2+2n_3D_3 {\! \! \!}
\end{array} \! \! \! \!
\begin{array}\lgroup{c}\rgroup
  {\! \! \!} u_{31} {\! \! \!} \\
  {\! \! \!} u_{32} {\! \! \!} \\
  {\! \! \!} u_{33} {\! \! \!}
\end{array}
\]
\medskip
\[
   \! \! + \, i\mu \!
\begin{array} \lgroup{ccc}\rgroup
  {\! \! \!} 0 &    n_2    &    n_3 {\! \! \!} \\
  {\! \! \!} 0 &     0     &   0    {\! \! \!} \\
  {\! \! \!} 0 &     0     &   0    {\! \! \!}
\end{array} \! \! \! \!
\begin{array}\lgroup{c}\rgroup
   {\! \! \! \! }  0   {\! \! \!} \\
                   x_3             \\
    {\! \! \! \! \! \! } -x_2   {\! \! \! \! }
\end{array} \! \!
   {\! \! } = \mu {\! \! }
\begin{array}\lgroup{c}\rgroup
  {\! \! \!}   (n_2D_2+n_3D_3)u_{31}+i(n_2x_3-n_3x_2) {\! \! \!}  \\
  {\! \! \!}   2n_2D_2u_{32}+n_3D_3u_{32}+n_3D_2u_{33}  {\! \! \!}  \\
  {\! \! \!}   n_2D_3u_{32}+n_2D_2u_{33}+2n_3D_3u_{33} {\! \! \!}
\end{array}_{\!0} {\! \!}={\! \!}
\begin{array}\lgroup{c}\rgroup
  {\! \! \!} 0 {\! \! \!}  \\
  {\! \! \!} 0 {\! \! \!}  \\
  {\! \! \!} 0 {\! \! \!}
\end{array}.
\]\\
  We will look for the solution of the system of equations in the form $u_3=(u_{31},0,0)$.
  From this system of equations, we examine the next boundary problem:
\begin{align} \label{n39}
      \left \{
        \begin{matrix}
          \qquad\mu\left(D^2_2+D^2_3\right)u_{31} & = & -2i\mu(x_2-k_2)
          \\
          \left.{\mu(n_2D_2+n_3D_3)u_{31}}\right|_{\partial\mathcal{D}} & = & -i\mu(n_2x_3-n_3x_2).
        \end{matrix}
       \right.
\end{align}
  Thus, in order to find $u_{31}$, one has to prove the solvability of the problem (\ref{n39}).
  This problem is solvable only, when we have the next equality:
\[
  2\,i \mu\! \!\int\limits_\mathcal{D}(x_2-k_2)\,dx+(-i\mu(n_2x_3-n_3x_2),1)_0=0\,.
\]
  We have that $(i\mu(n_2x_3-n_3x_2),1)_0=0$, see (\ref{n33}), then
\[
  2\,i \mu\! \!\int\limits_\mathcal{D}(x_2-k_2)\,dx+(-i\mu(n_2x_3-n_3x_2),1)_0=
  2\,i \mu\! \!\int\limits_\mathcal{D}(x_2-k_2)\,dx.
\]
  Determining now the constant $k_2$ from the condition $\int\limits_\mathcal{D}(x_2-k_2)\,dx=0$,
  we find that the boundary problem (\ref{n39}) is solvable and thus, one can take vector
  $u_3=(u_{31},0,0)$, where $u_{31}$ is some solution of the problem (\ref{n39}).\\

  Let $u_4$~--- be the next associated eigenvector, then $u_4$ can be defined
  by the system of equations:

\[
  \mathcal{C}_0(0)u_4+\frac{\partial}{\partial\alpha}\mathcal{C}_0(0)u_3+
  \frac{1}{2}\frac{\partial^2}{\partial\alpha^2}\mathcal{C}_0(0)u_2=
\]
\[
  =-\mu \! \!
\begin{array} \lgroup{ccc}\rgroup
   {\! \! \!}  D^2_2+ D^2_3  &        0       &         0                 \\
                    0        &  2D^2_2+D^2_3  &      D_2D_3                \\
                    0        &     D_2D_3     &   D^2_2+2D^2_3  {\! \! \!}
\end{array} \! \! \! \!
\begin{array}\lgroup{c}\rgroup
  {\! \! \!}  u_{41} {\! \! \!}  \\
  {\! \! \!}  u_{42} {\! \! \!}  \\
  {\! \! \!}  u_{43} {\! \! \!}
\end{array} \!
  \! -i\mu \! \!
\begin{array}\lgroup{ccc}\rgroup
   {\! \! \!} 0   &   D_2   &   D_3 {\! \! \!}  \\
   {\! \! \!} D_2 &    0    &    0  {\! \! \!}   \\
   {\! \! \!} D_3 &    0    &    0  {\! \! \!}
\end{array} {\! \! \! \!}
\begin{array}\lgroup{c}\rgroup
  {\! \! \!}    u_{31}  {\! \! \!}  \\
  {\! \! \!}      0    {\! \! \!}  \\
  {\! \! \!}      0   {\! \! \!}
\end{array} {\! \!}+
\]
\smallskip
\[
  +\, \mu {\! \!}
\begin{array}\lgroup{ccc}\rgroup
   {\! \! \!} 2  &   0  &   0  {\! \! \!} \\
   {\! \! \!} 0  &   1  &   0  {\! \! \!} \\
   {\! \! \!} 0  &   0  &   1  {\! \! \!}
\end{array} {\! \! \! \! \!}
\begin{array}\lgroup{c}\rgroup
   {\! \! \! \! }  0   {\! \! \!} \\
                   x_3             \\
    {\! \! \! \! \! \! } -x_2   {\! \! \! \! }
\end{array} \! \!
 =-\mu {\! \!}
\begin{array}\lgroup{c}\rgroup
   {\! \! \! \!} \left(D^2_2+D^2_3\right)u_{41} {\! \! \! \!}   \\
   {\! \! \!} 2D^2_2u_{42}+D^2_3u_{42}+D_2D_3u_{43}+iD_2u_{31}-x_3 {\! \! \!} \\
   {\! \! \!} D_2D_3u_{42}+D^2_2u_{43}+2D^2_3u_{43}+iD_3u_{31}+x_2 {\! \! \!}
\end{array} {\! \! } = {\! \!}
\begin{array}\lgroup{c}\rgroup
  {\! \! \!} 0 {\! \! \!} \\
  {\! \! \!} 0 {\! \! \!} \\
  {\! \! \!} 0 {\! \! \!}
\end{array}\! ;
\]

\bigskip
\[
  \left.{\mathcal{U}(0)u_4+\frac{\partial}{\partial\alpha}\mathcal{U}(0)u_3}
  \right|_{\partial\mathcal{D}}=
\]
\[
  = \! \mu \!
\begin{array} \lgroup{ccc}\rgroup
  {\! \! \!} n_2D_2+n_3D_3    &           0        &       0            \\
         0          &   2n_2D_2+n_3D_3   &    n_3D_2            \\
         0          &        n_2D_3      & n_2D_2+2n_3D_3 {\! \! \!}
\end{array} \! \! \! \!
\begin{array}\lgroup{c}\rgroup
  {\! \! \!} u_{41} {\! \! \!} \\
  {\! \! \!} u_{42} {\! \! \!} \\
  {\! \! \!} u_{43} {\! \! \!}
\end{array}
   \! \! + \, i\mu \!
\begin{array} \lgroup{ccc}\rgroup
  {\! \! \!} 0 &    n_2    &    n_3 {\! \! \!} \\
  {\! \! \!} 0 &     0     &   0    {\! \! \!} \\
  {\! \! \!} 0 &     0     &   0    {\! \! \!}
\end{array} \! \! \! \!
\begin{array}\lgroup{c}\rgroup
  {\! \! \!}    u_{31}  {\! \! \!}  \\
  {\! \! \!}      0    {\! \! \!}  \\
  {\! \! \!}      0   {\! \! \!}
\end{array} {\! \!}=
\]
\medskip
\[
  = \mu
\begin{array}\lgroup{c}\rgroup
  {\! \! \!}   (n_2D_2+n_3D_3)u_{41} {\! \! \!}  \\
  {\! \! \!}   2n_2D_2u_{42}+n_3D_3u_{42}+n_3D_2u_{43}  {\! \! \!}  \\
  {\! \! \!}   n_2D_3u_{42}+n_2D_2u_{43}+2n_3D_3u_{43} {\! \! \!}
\end{array}_{\!0} {\! \!}={\! \!}
\begin{array}\lgroup{c}\rgroup
  {\! \! \!} 0 {\! \! \!}  \\
  {\! \! \!} 0 {\! \! \!}  \\
  {\! \! \!} 0 {\! \! \!}
\end{array}.
\]\\
  The system of equation have solution $u_4$ if the next boundary problem
  is solvable:
\begin{align} \label{n40}
\begin{matrix}
 & - \mu {\! \!}
  \begin{array}\lgroup{cc}\rgroup
     {\! \! \!} 2D^2_2+D^2_3   &     D_2D_3     \\
         D_2D_3             &  D^2_2+2D^2_3 {\! \! \!}
  \end{array} {\! \! \! \! \!}
  \begin{array}\lgroup{c}\rgroup
  {\! \! \!} u_{42} {\! \! \!} \\
  {\! \! \!} u_{43} {\! \! \!}
  \end{array}  {\! \! \!} =
  i\mu {\! \!}
  \begin{array}\lgroup{cc}\rgroup
  {\! \! \!} D_2  &  0  {\! \! \!}  \\
  {\! \! \!} D_3  &  0  {\! \! \!}
  \end{array} {\! \! \! \! \!}
  \begin{array}\lgroup{c}\rgroup
  {\! \! \!} u_{31} {\! \! \!} \\
  {\! \! \!}   0    {\! \! \!}
  \end{array}  {\! \! \!}
  -\mu {\! \!}
  \begin{array}\lgroup{cc}\rgroup
  {\! \! \!} 1  &  0  {\! \! \!}  \\
  {\! \! \!} 0  &  1  {\! \! \!}
  \end{array} {\! \! \! \! \!}
  \begin{array}\lgroup{c}\rgroup
  {\! \! \!}   x_3 {\! \! \!} \\
  {\! \! \! \! \!}  -x_2    {\! \! \!}
  \end{array}=f
\\
\smallskip
\\
 & \mu {\! \!}
  \begin{array}\lgroup{cc}\rgroup
     {\! \! \!} 2n_2D_2+n_3D_3   &     n_3D_2     \\
              n_2D_3             &  n_2D_2+2n_3D_3 {\! \! \!}
  \end{array} {\! \! \! \! \!}
  \begin{array}\lgroup{c}\rgroup
  {\! \! \!} u_{42} {\! \! \!} \\
  {\! \! \!} u_{43}    {\! \! \!}
  \end{array}_{\partial\mathcal{D}}  {\! \! \!} =
  -\,i  {\! \!}
  \begin{array}\lgroup{cc}\rgroup
  {\! \! \!} 0  &  0  {\! \! \!}  \\
  {\! \! \!} 0  &  0  {\! \! \!}
  \end{array} {\! \! \! \! \!}
  \begin{array}\lgroup{c}\rgroup
  {\! \! \!} u_{31} {\! \! \!} \\
  {\! \! \!}   0    {\! \! \!}
  \end{array} = h
\end{matrix}
\end{align}\\
  Therefore, the boundary problem (\ref{n40}) is solvable if and
  only  if we have the relations:

\[
  (f,e_k)+(h,e_k)_0=0, \, k=1,2,3,
\smallskip
\]
  where $e_1=(1,0)$, $e_2=(0,1)$ and $e_3=(x_3,-x_2)$ are the solutions of the homogeneous
  boundary problem (\ref{n40}) i.e., when $f=h=0$. We examine if this case is true for the vector
  $e_1=(1,0)$, then
\[
  (f,e_1)+(h,e_1)_0=i\mu(D_2u_{31},1)-\mu(x_3,1)=i\mu(D_2u_{31},D_2x_2)-\mu(x_3,D_2x_2)=
\]
\[
  =-i\mu(D^2_2u_{31},x_2)+\mu(in_2D_2u_{31},x_2)_0+\mu(D_2x_3,x_2)-\mu(n_2x_3,x_2)_0=
\smallskip
\]
\[
  =i\mu(D^2_3u_{31})-2\mu(x_2-k_2,x_2)+\mu(in_2D_2u_{31}-n_2x_3,x_2)_0=i\mu(D^2_3u_{31},x_2)-
\smallskip
\]
\[
  -2\mu(x_2-k_2,x_2)-\mu(in_3D_3u_{31}+n_3x_2,x_2)_0=
\smallskip
\]
\[
  =i\mu(D^2_3u_{31},x_2)-\mu(in_3D_3u_{31}+n_3x_2,x_2)-2\mu(x_2-k_2,x_2)=
\smallskip
\]
\[
  =i\mu(D^2_3u_{31},x_2)-i\mu(n_3D_3u_{31},x_2)_0-\mu(n_3x_2,x_2)_0-2\mu(x_2-k_2,x_2)=
\smallskip
\]
\[
  =i\mu(D^2_3u_{31},x_2)-i\mu(D^2_3u_{31},x_2)+\mu(D_3u_{31},iD_3x_2)-\mu(D_3x_2,x_2)-\mu(x_2,D_3x_2)-
\smallskip
\]
\[
  -2\mu(x_2-k_2,x_2)=-2\mu(x_2-k_2,x_2)=-2\mu\|x_2-k_2\|^2<0.
\smallskip
\]
  For this inequality the boundary problem (\ref{n40}) and at the
  same time the system of equations for the vector are not solvable.
  Consequently, the chain of the vectors:
\[
  u_0=(0,1,0), u_1=-i(x_2-k_2,0,0), u_2=(0,x_3,-x_2), \, \emph{and} \,
  \, u_3=(u_{31},0,0)\text{~---}\,\emph{is maximal}.
\]

  c) Let $w_1$~--- be an associated vector to be eigenvector $w_ 0$,
  then $w_1$ can be defined by the system of equations:
\[
  \mathcal{C}_0(0)w_1+\frac{\partial}{\partial\alpha}\mathcal{C}_0(0)w_0=
\]
\[
  =-\mu \! \!
\begin{array} \lgroup{ccc}\rgroup
   {\! \! \!}  D^2_2+ D^2_3  &        0       &         0                 \\
                    0        &  2D^2_2+D^2_3  &      D_2D_3                \\
                    0        &     D_2D_3     &   D^2_2+2D^2_3  {\! \! \!}
\end{array} \! \! \! \!
\begin{array}\lgroup{c}\rgroup
  {\! \! \!}  w_{11} {\! \! \!}  \\
  {\! \! \!}  w_{12} {\! \! \!}  \\
  {\! \! \!}  w_{13} {\! \! \!}
\end{array} \!
  \! -i\mu \! \!
\begin{array}\lgroup{ccc}\rgroup
   {\! \! \!} 0   &   D_2   &   D_3 {\! \! \!}  \\
   {\! \! \!} D_2 &    0    &    0  {\! \! \!}   \\
   {\! \! \!} D_3 &    0    &    0  {\! \! \!}
\end{array} \! \! \! \!
\begin{array}\lgroup{c}\rgroup
   {\! \! \!} 0 {\! \! \!} \\
   {\! \! \!} 0 {\! \! \!} \\
   {\! \! \!} 1 {\! \! \!}
\end{array} \! =
\]

\medskip
\[
  =-\mu \!
\begin{array}\lgroup{c}\rgroup
    \left(D^2_2+D^2_3\right)w_{11}  \\
   {\! \! \!} 2D^2_2w_{12}+D^2_3w_{12}+D_2D_3w_{13} {\! \! \!} \\
   {\! \! \!} D_2D_3w_{12}+D^2_2w_{13}+2D^2_3w_{13} {\! \! \!}
\end{array} {\! \! } = \!
\begin{array}\lgroup{c}\rgroup
  {\! \! \!} 0 {\! \! \!} \\
  {\! \! \!} 0 {\! \! \!} \\
  {\! \! \!} 0 {\! \! \!}
\end{array};
\]

\bigskip
\[
  \left.{\mathcal{U}(0)w_1+\frac{\partial}{\partial\alpha}\mathcal{U}(0)w_0}
  \right |_{\partial\mathcal{D}}=
\]
\[
  = \! \mu \!
\begin{array} \lgroup{ccc}\rgroup
  {\! \! \!} n_2D_2+n_3D_3    &           0        &       0            \\
         0          &   2n_2D_2+n_3D_3   &    n_3D_2            \\
         0          &        n_2D_3      & n_2D_2+2n_3D_3 {\! \! \!}
\end{array} \! \! \! \!
\begin{array}\lgroup{c}\rgroup
  {\! \! \!} w_{11} {\! \! \!} \\
  {\! \! \!} w_{12} {\! \! \!} \\
  {\! \! \!} w_{13} {\! \! \!}
\end{array}
   \! \! + i\mu \!
\begin{array} \lgroup{ccc}\rgroup
  {\! \! \!} 0 &    n_2    &    n_3 {\! \! \!} \\
  {\! \! \!} 0 &     0     &   0    {\! \! \!} \\
  {\! \! \!} 0 &     0     &   0    {\! \! \!}
\end{array} \! \! \! \!
\begin{array}\lgroup{c}\rgroup
  {\! \! \!} 0 {\! \! \!}  \\
  {\! \! \!} 0 {\! \! \!}  \\
  {\! \! \!} 1 {\! \! \!}
\end{array} {\! \!} =
\]

\medskip
\[
   {\! \! } = \mu {\! \! }
\begin{array}\lgroup{c}\rgroup
                     (n_2D_2+n_3D_3)w_{11}+in_3                  \\
   {\! \! \!} 2n_2D_2w_{12}+n_3D_3w_{12}+n_3D_2w_{13}  {\! \! \!} \\
   {\! \! \!} n_2D_3w_{12}+n_2D_2w_{13}+2n_3D_3w_{13} {\! \! \!}
\end{array}_{\!0} {\! \!}={\! \!}
\begin{array}\lgroup{c}\rgroup
  {\! \! \!} 0 {\! \! \!}  \\
  {\! \! \!} 0 {\! \! \!}  \\
  {\! \! \!} 0 {\! \! \!}
\end{array}.
\]
\\
  From this system of equations, we obtain that the associated vector $u_1$ has the form:
\[
  w_1=i(-x_3+k_3,0,0), \, \emph{where}\; k_3 \text{~---} \,
  \emph{constant depends on the domain}\: \mathcal{D}.
\]

  Let $w_2$~--- be the next associated eigenvector, then $w_2$ can be defined
  by the system of equations:

\[
  \mathcal{C}_0(0)w_2+\frac{\partial}{\partial\alpha}\mathcal{C}_0(0)w_1+
  \frac{1}{2}\frac{\partial^2}{\partial\alpha^2}\mathcal{C}_0(0)w_0=
\]
\[
  =-\mu \! \!
\begin{array} \lgroup{ccc}\rgroup
   {\! \! \!}  D^2_2+ D^2_3  &        0       &         0                 \\
                    0        &  2D^2_2+D^2_3  &      D_2D_3                \\
                    0        &     D_2D_3     &   D^2_2+2D^2_3  {\! \! \!}
\end{array} \! \! \! \!
\begin{array}\lgroup{c}\rgroup
  {\! \! \!}  w_{21} {\! \! \!}  \\
  {\! \! \!}  w_{22} {\! \! \!}  \\
  {\! \! \!}  w_{23} {\! \! \!}
\end{array} \!
  \! -i\mu \! \!
\begin{array}\lgroup{ccc}\rgroup
   {\! \! \!} 0   &   D_2   &   D_3 {\! \! \!}  \\
   {\! \! \!} D_2 &    0    &    0  {\! \! \!}   \\
   {\! \! \!} D_3 &    0    &    0  {\! \! \!}
\end{array} {\! \! \! \!}
\begin{array}\lgroup{c}\rgroup
   {\! \! \! \! \!}  -\!i(x_3-k_3)   {\! \! \!} \\
                             0                  \\
                             0
\end{array} \! +
\]

\medskip
\[
  +\, \mu {\! \!}
\begin{array}\lgroup{ccc}\rgroup
   {\! \! \!} 2  &   0  &   0  {\! \! \!} \\
   {\! \! \!} 0  &   1  &   0  {\! \! \!} \\
   {\! \! \!} 0  &   0  &   1  {\! \! \!}
\end{array} {\! \! \! \! \!}
\begin{array}\lgroup{c}\rgroup
  {\! \! \!} 0 {\! \! \!}  \\
  {\! \! \!} 0 {\! \! \!}  \\
  {\! \! \!} 1 {\! \! \!}
\end{array} {\! \!}
  =-\mu {\! \!}
\begin{array}\lgroup{c}\rgroup
                   \left(D^2_2+D^2_3\right)w_{21}    \\
   {\! \! \!} 2D^2_2w_{22}+D^2_3w_{22}+D_2D_3w_{23} {\! \! \!} \\
   {\! \! \!} D_2D_3w_{22}+D^2_2w_{23}+2D^2_3w_{23} {\! \! \!}
\end{array} {\! \! } = {\! \!}
\begin{array}\lgroup{c}\rgroup
  {\! \! \!} 0 {\! \! \!} \\
  {\! \! \!} 0 {\! \! \!} \\
  {\! \! \!} 0 {\! \! \!}
\end{array}\! ;
\]

\bigskip
\[
  \left.{\mathcal{U}(0)w_2+\frac{\partial}{\partial\alpha}\mathcal{U}(0)w_1}
  \right|_{\partial\mathcal{D}}=
\]
\[
  = \! \mu \!
\begin{array} \lgroup{ccc}\rgroup
  {\! \! \!} n_2D_2+n_3D_3    &           0        &       0            \\
         0          &   2n_2D_2+n_3D_3   &    n_3D_2            \\
         0          &        n_2D_3      & n_2D_2+2n_3D_3 {\! \! \!}
\end{array} \! \! \! \!
\begin{array}\lgroup{c}\rgroup
  {\! \! \!} w_{21} {\! \! \!} \\
  {\! \! \!} w_{22} {\! \! \!} \\
  {\! \! \!} w_{23} {\! \! \!}
\end{array}+
\]
\medskip
\[
    + \, i\mu \!
\begin{array} \lgroup{ccc}\rgroup
  {\! \! \!} 0 &    n_2    &    n_3 {\! \! \!} \\
  {\! \! \!} 0 &     0     &   0    {\! \! \!} \\
  {\! \! \!} 0 &     0     &   0    {\! \! \!}
\end{array} \! \! \! \!
\begin{array}\lgroup{c}\rgroup
   {\! \! \! \! \!}  -\!i(x_3-k_3)   {\! \! \!} \\
                             0                  \\
                             0
\end{array} \!
   {\! \! } = \mu {\! \! }
\begin{array}\lgroup{c}\rgroup
                         (n_2D_2+n_3D_3)w_{21}   \\
  {\! \! \!}   2n_2D_2w_{22}+n_3D_3w_{22}+n_3D_2w_{23}  {\! \! \!}  \\
  {\! \! \!}   n_2D_3w_{22}+n_2D_2w_{23}+2n_3D_3w_{23} {\! \! \!}
\end{array}_{\!0} {\! \!}={\! \!}
\begin{array}\lgroup{c}\rgroup
  {\! \! \!} 0 {\! \! \!}  \\
  {\! \! \!} 0 {\! \! \!}  \\
  {\! \! \!} 0 {\! \! \!}
\end{array}.
\]\\
  From this system of equations, we obtain that the associated vector $w_2$
  has the form:
\[
  w_2=(0,x_3,-x_2).
\]

  Let $w_3$~--- be the next associated eigenvector, then $w_3$ can be defined
  by the system of equations:

\[
  \mathcal{C}_0(0)w_3+\frac{\partial}{\partial\alpha}\mathcal{C}_0(0)w_2+
  \frac{1}{2}\frac{\partial^2}{\partial\alpha^2}\mathcal{C}_0(0)w_1=
\]
\[
  =-\mu \! \!
\begin{array} \lgroup{ccc}\rgroup
   {\! \! \!}  D^2_2+ D^2_3  &        0       &         0                 \\
                    0        &  2D^2_2+D^2_3  &      D_2D_3                \\
                    0        &     D_2D_3     &   D^2_2+2D^2_3  {\! \! \!}
\end{array} \! \! \! \!
\begin{array}\lgroup{c}\rgroup
  {\! \! \!}  w_{31} {\! \! \!}  \\
  {\! \! \!}  w_{32} {\! \! \!}  \\
  {\! \! \!}  w_{33} {\! \! \!}
\end{array} \!
  \! -i\mu \! \!
\begin{array}\lgroup{ccc}\rgroup
   {\! \! \!} 0   &   D_2   &   D_3 {\! \! \!}  \\
   {\! \! \!} D_2 &    0    &    0  {\! \! \!}   \\
   {\! \! \!} D_3 &    0    &    0  {\! \! \!}
\end{array} {\! \! \! \!}
\begin{array}\lgroup{c}\rgroup
  {\! \! \!}         0  {\! \! \!}  \\
  {\! \! \!}        x_3 {\! \! \!}  \\
  {\! \! \! \! \!} -x_2 {\! \! \!}
\end{array} {\! \!}+
\]
\smallskip
\[
  +\, \mu {\! \!}
\begin{array}\lgroup{ccc}\rgroup
   {\! \! \!} 2  &   0  &   0  {\! \! \!} \\
   {\! \! \!} 0  &   1  &   0  {\! \! \!} \\
   {\! \! \!} 0  &   0  &   1  {\! \! \!}
\end{array} {\! \! \! \! \!}
\begin{array}\lgroup{c}\rgroup
  {\! \! \! \! \!} -i(x_3-k_3) {\! \! \!}  \\
  {\! \! \!}         0     {\! \! \!}  \\
  {\! \! \!}         0     {\! \! \!}
\end{array} {\! \!}
  =-\mu {\! \!}
\begin{array}\lgroup{c}\rgroup
   {\! \! \! \!} \left(D^2_2+D^2_3\right)w_{31}+2i(x_3-k_3) {\! \! \! \!}   \\
   {\! \! \!} 2D^2_2w_{32}+D^2_3w_{32}+D_2D_3w_{33} {\! \! \!} \\
   {\! \! \!} D_2D_3w_{32}+D^2_2w_{33}+2D^2_3w_{33} {\! \! \!}
\end{array} {\! \! } = {\! \!}
\begin{array}\lgroup{c}\rgroup
  {\! \! \!} 0 {\! \! \!} \\
  {\! \! \!} 0 {\! \! \!} \\
  {\! \! \!} 0 {\! \! \!}
\end{array}\! ;
\]
\bigskip
\[
  \left.{\mathcal{U}(0)w_3+\frac{\partial}{\partial\alpha}\mathcal{U}(0)w_2}
  \right|_{\partial\mathcal{D}}=
\]
\[
  = \! \mu \!
\begin{array} \lgroup{ccc}\rgroup
  {\! \! \!} n_2D_2+n_3D_3    &           0        &       0            \\
         0          &   2n_2D_2+n_3D_3   &    n_3D_2            \\
         0          &        n_2D_3      & n_2D_2+2n_3D_3 {\! \! \!}
\end{array} \! \! \! \!
\begin{array}\lgroup{c}\rgroup
  {\! \! \!} w_{31} {\! \! \!} \\
  {\! \! \!} w_{32} {\! \! \!} \\
  {\! \! \!} w_{33} {\! \! \!}
\end{array}+
\]
\[
   \! \! + \, i\mu \!
\begin{array} \lgroup{ccc}\rgroup
  {\! \! \!} 0 &    n_2    &    n_3 {\! \! \!} \\
  {\! \! \!} 0 &     0     &   0    {\! \! \!} \\
  {\! \! \!} 0 &     0     &   0    {\! \! \!}
\end{array} \! \! \! \!
\begin{array}\lgroup{c}\rgroup
   {\! \! \! \! }  0   {\! \! \!} \\
                   x_3             \\
    {\! \! \! \! \! \! } -x_2   {\! \! \! \! }
\end{array} \! \!
   {\! \! } = \mu {\! \! }
\begin{array}\lgroup{c}\rgroup
  {\! \! \!}   (n_2D_2+n_3D_3)w_{31}+i(n_2x_3-n_3x_2) {\! \! \!}  \\
  {\! \! \!}   2n_2D_2w_{32}+n_3D_3w_{32}+n_3D_2w_{33}  {\! \! \!}  \\
  {\! \! \!}   n_2D_3w_{32}+n_2D_2w_{33}+2n_3D_3w_{33} {\! \! \!}
\end{array}_{\!0} {\! \!}={\! \!}
\begin{array}\lgroup{c}\rgroup
  {\! \! \!} 0 {\! \! \!}  \\
  {\! \! \!} 0 {\! \! \!}  \\
  {\! \! \!} 0 {\! \! \!}
\end{array}.
\]\\
  We will look for the solution of the system of equations in the form $w_3=(u_{31},0,0)$.
  From this system of equations, we examine the next boundary problem:
\begin{align} \label{n41}
      \left \{
        \begin{matrix}
          \qquad\mu\left(D^2_2+D^2_3\right)w_{31} & = & -2i\mu(x_3-k_3)
          \\
          \left.{\mu(n_2D_2+n_3D_3)w_{31}}\right|_{\partial\mathcal{D}} & = & -i\mu(n_2x_3-n_3x_2).
        \end{matrix}
       \right.
\end{align}
  Thus, in order to find $w_{31}$, one has to prove the solvability of the problem (\ref{n41}).
  This problem is solvable only, when we have the next equality:
\[
  2\,i \mu\!\!\int\limits_\mathcal{D}(x_3-k_3)\,dx+(-i\mu(n_2x_3-n_3x_2),1)_0=0\,.
\]
  We have that $(i\mu(n_2x_3-n_3x_2),1)_0=0$, see  (\ref{n33}), then
\[
  2\,i \mu\! \!\int\limits_\mathcal{D}(x_3-k_3)\,dx+(-i\mu(n_2x_3-n_3x_2),1)_0=
  2\,i \mu\! \!\int\limits_\mathcal{D}(x_3-k_3)\,dx.
\]
  Determining now the constant $k_3$ from the condition $\int\limits_\mathcal{D}(x_3-k_3)\,dx=0$,
  we find that the boundary problem (\ref{n41}) is solvable and thus, one can take vector
  $w_3=(w_{31},0,0)$, where $w_{31}$ is some solution of the problem (\ref{n41}).\\

  Let $w_4$~--- be the next associated eigenvector, then $w_4$ can be defined
  by the system of equations:

\[
  \mathcal{C}_0(0)w_4+\frac{\partial}{\partial\alpha}\mathcal{C}_0(0)w_3+
  \frac{1}{2}\frac{\partial^2}{\partial\alpha^2}\mathcal{C}_0(0)w_2=
\]
\[
  =-\mu \! \!
\begin{array} \lgroup{ccc}\rgroup
   {\! \! \!}  D^2_2+ D^2_3  &        0       &         0                 \\
                    0        &  2D^2_2+D^2_3  &      D_2D_3                \\
                    0        &     D_2D_3     &   D^2_2+2D^2_3  {\! \! \!}
\end{array} \! \! \! \!
\begin{array}\lgroup{c}\rgroup
  {\! \! \!}  w_{41} {\! \! \!}  \\
  {\! \! \!}  w_{42} {\! \! \!}  \\
  {\! \! \!}  w_{43} {\! \! \!}
\end{array} \!
  \! -i\mu \! \!
\begin{array}\lgroup{ccc}\rgroup
   {\! \! \!} 0   &   D_2   &   D_3 {\! \! \!}  \\
   {\! \! \!} D_2 &    0    &    0  {\! \! \!}   \\
   {\! \! \!} D_3 &    0    &    0  {\! \! \!}
\end{array} {\! \! \! \!}
\begin{array}\lgroup{c}\rgroup
  {\! \! \!}    w_{31}  {\! \! \!}  \\
  {\! \! \!}      0    {\! \! \!}  \\
  {\! \! \!}      0   {\! \! \!}
\end{array} {\! \!}+
\]
\smallskip
\[
  +\, \mu {\! \!}
\begin{array}\lgroup{ccc}\rgroup
   {\! \! \!} 2  &   0  &   0  {\! \! \!} \\
   {\! \! \!} 0  &   1  &   0  {\! \! \!} \\
   {\! \! \!} 0  &   0  &   1  {\! \! \!}
\end{array} {\! \! \! \! \!}
\begin{array}\lgroup{c}\rgroup
   {\! \! \! \! }  0   {\! \! \!} \\
                   x_3             \\
    {\! \! \! \! \! \! } -x_2   {\! \! \! \! }
\end{array} \! \!
 =-\mu {\! \!}
\begin{array}\lgroup{c}\rgroup
   {\! \! \! \!} \left(D^2_2+D^2_3\right)w_{41} {\! \! \! \!}   \\
   {\! \! \!} 2D^2_2w_{42}+D^2_3w_{42}+D_2D_3w_{43}+iD_2w_{31}-x_3 {\! \! \!} \\
   {\! \! \!} D_2D_3w_{42}+D^2_2w_{43}+2D^2_3w_{43}+iD_3w_{31}+x_2 {\! \! \!}
\end{array} {\! \! } = {\! \!}
\begin{array}\lgroup{c}\rgroup
  {\! \! \!} 0 {\! \! \!} \\
  {\! \! \!} 0 {\! \! \!} \\
  {\! \! \!} 0 {\! \! \!}
\end{array}\! ;
\]

\bigskip
\[
  \left.{\mathcal{U}(0)w_4+\frac{\partial}{\partial\alpha}\mathcal{U}(0)w_3}
  \right|_{\partial\mathcal{D}}=
\]
\[
  = \! \mu \!
\begin{array} \lgroup{ccc}\rgroup
  {\! \! \!} n_2D_2+n_3D_3    &           0        &       0            \\
         0          &   2n_2D_2+n_3D_3   &    n_3D_2            \\
         0          &        n_2D_3      & n_2D_2+2n_3D_3 {\! \! \!}
\end{array} \! \! \! \!
\begin{array}\lgroup{c}\rgroup
  {\! \! \!} w_{41} {\! \! \!} \\
  {\! \! \!} w_{42} {\! \! \!} \\
  {\! \! \!} w_{43} {\! \! \!}
\end{array}
   \! \! + \, i\mu \!
\begin{array} \lgroup{ccc}\rgroup
  {\! \! \!} 0 &    n_2    &    n_3 {\! \! \!} \\
  {\! \! \!} 0 &     0     &   0    {\! \! \!} \\
  {\! \! \!} 0 &     0     &   0    {\! \! \!}
\end{array} \! \! \! \!
\begin{array}\lgroup{c}\rgroup
  {\! \! \!}    w_{31}  {\! \! \!}  \\
  {\! \! \!}      0    {\! \! \!}  \\
  {\! \! \!}      0   {\! \! \!}
\end{array} {\! \!}=
\]
\medskip
\[
  = \mu
\begin{array}\lgroup{c}\rgroup
  {\! \! \!}   (n_2D_2+n_3D_3)w_{41} {\! \! \!}  \\
  {\! \! \!}   2n_2D_2w_{42}+n_3D_3w_{42}+n_3D_2w_{43}  {\! \! \!}  \\
  {\! \! \!}   n_2D_3w_{42}+n_2D_2w_{43}+2n_3D_3w_{43} {\! \! \!}
\end{array}_{\!0} {\! \!}={\! \!}
\begin{array}\lgroup{c}\rgroup
  {\! \! \!} 0 {\! \! \!}  \\
  {\! \! \!} 0 {\! \! \!}  \\
  {\! \! \!} 0 {\! \! \!}
\end{array}.
\]
  The system of equation have solution $w_4$ if the next boundary problem
  is solvable:
\begin{align} \label{n42}
\begin{matrix}
 & - \mu {\! \!}
  \begin{array}\lgroup{cc}\rgroup
     {\! \! \!} 2D^2_2+D^2_3   &     D_2D_3     \\
         D_2D_3             &  D^2_2+2D^2_3 {\! \! \!}
  \end{array} {\! \! \! \! \!}
  \begin{array}\lgroup{c}\rgroup
  {\! \! \!} w_{42} {\! \! \!} \\
  {\! \! \!} w_{43} {\! \! \!}
  \end{array}  {\! \! \!} =
  i\mu {\! \!}
  \begin{array}\lgroup{cc}\rgroup
  {\! \! \!} D_2  &  0  {\! \! \!}  \\
  {\! \! \!} D_3  &  0  {\! \! \!}
  \end{array} {\! \! \! \! \!}
  \begin{array}\lgroup{c}\rgroup
  {\! \! \!} w_{31} {\! \! \!} \\
  {\! \! \!}   0    {\! \! \!}
  \end{array}  {\! \! \!}
  -\mu {\! \!}
  \begin{array}\lgroup{cc}\rgroup
  {\! \! \!} 1  &  0  {\! \! \!}  \\
  {\! \! \!} 0  &  1  {\! \! \!}
  \end{array} {\! \! \! \! \!}
  \begin{array}\lgroup{c}\rgroup
  {\! \! \!}   x_3 {\! \! \!} \\
  {\! \! \! \! \!}  -x_2    {\! \! \!}
  \end{array}=f
\\
\smallskip
\\
 & \mu {\! \!}
  \begin{array}\lgroup{cc}\rgroup
     {\! \! \!} 2n_2D_2+n_3D_3   &     n_3D_2     \\
              n_2D_3             &  n_2D_2+2n_3D_3 {\! \! \!}
  \end{array} {\! \! \! \! \!}
  \begin{array}\lgroup{c}\rgroup
  {\! \! \!} w_{42} {\! \! \!} \\
  {\! \! \!} w_{43}    {\! \! \!}
  \end{array}_{\partial\mathcal{D}}  {\! \! \!} =
  -\,i  {\! \!}
  \begin{array}\lgroup{cc}\rgroup
  {\! \! \!} 0  &  0  {\! \! \!}  \\
  {\! \! \!} 0  &  0  {\! \! \!}
  \end{array} {\! \! \! \! \!}
  \begin{array}\lgroup{c}\rgroup
  {\! \! \!} w_{31} {\! \! \!} \\
  {\! \! \!}   0    {\! \! \!}
  \end{array} = h
\end{matrix}
\end{align}\\
  Therefore, the boundary problem (\ref{n42}) is solvable if and
  only  if we have the relations:
\[
  (f,e_k)+(h,e_k)_0=0, \, k=1,2,3,
\]
  where $e_1=(1,0)$, $e_2=(0,1)$ and $e_3=(x_3,-x_2)$ are solutions of the homogeneous boundary
  problem (\ref{n42}), i.e., when $f=h=0$. We examine if this case is true for the vector
  $e_2=(0,1)$, then
\[
  (f,e_2)+(h,e_2)_0=i\mu(D_3w_{31},1)+\mu(x_2,1)=i\mu(D_3w_{31},D_3x_3)+\mu(x_2,D_3x_3)=
\]
\[
  =-i\mu(D^2_3w_{31},x_3)+\mu(in_3D_3w_{31},x_3)_0+\mu(D_3x_2,x_3)+\mu(n_3x_2,x_3)_0=
\smallskip
\]
\[
  =i\mu(D^2_2w_{31},x_3)-2\mu(x_3-k_3,x_3)+\mu(in_3D_3w_{31}+n_3x_2,x_3)_0=i\mu(D^2_2w_{31},x_3)-
\smallskip
\]
\[
  -\mu(in_2D_2w_{31},x_3)_0+\mu(n_2x_3,x_3)_0-2\mu(x_3-k_3,x_3)=i\mu(D^2_2w_{31},x_3)
  -i\mu(D^2_2w_{31},x_3)+
\smallskip
\]
\[
  +\mu(D_2w_{31},iD_2x_3)+\mu(D_2x_3,x_3)+\mu(x_3,D_2x_3)-2\mu(x_3-k_3,x_3)=
\smallskip
\]
\[
  -2\mu(x_3-k_3,x_3)=-2\mu\|x_3-k_3\|^2<0.
\smallskip
\]
  For this inequality the boundary problem (\ref{n42})
  and at the same time the system of equations for the
  vector are not solvable. Consequently, the chain of the vectors:
\[
  w_0\!=\!(0,0,1), \, w_1\!=\!-i(x_3-k_3,0,0), \, w_2\!=\!(0,x_3,-x_2) \; \emph{and}
  \; w_3\!=\!(w_{31},0,0)\text{~---} \, \emph{\,is maximal}.
\]\\
  d) Let $g_1$~--- be an associated vector to be eigenvector $g_ 0$,
  then $g_1$ can be defined by the system of equations:

\[
  \mathcal{C}_0(0)g_1+\frac{\partial}{\partial\alpha}\mathcal{C}_0(0)g_0=
\]
\[
  =-\mu \! \!
\begin{array} \lgroup{ccc}\rgroup
   {\! \! \!}  D^2_2+ D^2_3  &        0       &         0                 \\
                    0        &  2D^2_2+D^2_3  &      D_2D_3                \\
                    0        &     D_2D_3     &   D^2_2+2D^2_3  {\! \! \!}
\end{array} \! \! \! \!
\begin{array}\lgroup{c}\rgroup
  {\! \! \!}  g_{11} {\! \! \!}  \\
  {\! \! \!}  g_{12} {\! \! \!}  \\
  {\! \! \!}  g_{13} {\! \! \!}
\end{array} \!
  \! -i\mu \! \!
\begin{array}\lgroup{ccc}\rgroup
   {\! \! \!} 0   &   D_2   &   D_3 {\! \! \!}  \\
   {\! \! \!} D_2 &    0    &    0  {\! \! \!}   \\
   {\! \! \!} D_3 &    0    &    0  {\! \! \!}
\end{array} \! \! \! \!
\begin{array}\lgroup{c}\rgroup
   {\! \! \!} 0 {\! \! \!} \\
   {\! \! \!} x_3 {\! \! \!} \\
   {\! \! \! \! \! \!} -x_2  {\! \! \!}
\end{array} \! =
\]
\medskip
\[
  =-\mu \!
\begin{array}\lgroup{c}\rgroup
  {\! \! \!}  \left(D^2_2+D^2_3\right)g_{11} {\! \! \!} \\
   {\! \! \!} 2D^2_2g_{12}+D^2_3g_{12}+D_2D_3g_{13} {\! \! \!} \\
   {\! \! \!} D_2D_3g_{12}+D^2_2g_{13}+2D^2_3g_{13} {\! \! \!}
\end{array} {\! \! } = \!
\begin{array}\lgroup{c}\rgroup
  {\! \! \!} 0 {\! \! \!} \\
  {\! \! \!} 0 {\! \! \!} \\
  {\! \! \!} 0 {\! \! \!}
\end{array};
\]

\bigskip
\[
  \left.{\mathcal{U}(0)g_1+\frac{\partial}{\partial\alpha}\mathcal{U}(0)g_0}
  \right |_{\partial\mathcal{D}}=
\]
\[
  = \! \mu \!
\begin{array} \lgroup{ccc}\rgroup
  {\! \! \!} n_2D_2+n_3D_3    &           0        &       0            \\
         0          &   2n_2D_2+n_3D_3   &    n_3D_2            \\
         0          &        n_2D_3      & n_2D_2+2n_3D_3 {\! \! \!}
\end{array} \! \! \! \!
\begin{array}\lgroup{c}\rgroup
  {\! \! \!} g_{11} {\! \! \!} \\
  {\! \! \!} g_{12} {\! \! \!} \\
  {\! \! \!} g_{13} {\! \! \!}
\end{array}
   \! \! + i\mu \!
\begin{array} \lgroup{ccc}\rgroup
  {\! \! \!} 0 &    n_2    &    n_3 {\! \! \!} \\
  {\! \! \!} 0 &     0     &   0    {\! \! \!} \\
  {\! \! \!} 0 &     0     &   0    {\! \! \!}
\end{array} \! \! \! \!
\begin{array}\lgroup{c}\rgroup
  {\! \! \!} 0 {\! \! \!}  \\
  {\! \! \!} x_3 {\! \! \!}  \\
  {\! \! \! \! \!} -x_2 {\! \! \!}
\end{array} {\! \!} =
\]
\medskip
\[
   {\! \! } = \mu {\! \! }
\begin{array}\lgroup{c}\rgroup
    {\! \! \!} (n_2D_2+n_3D_3)g_{11}+in_2x_3-in_3x_2  {\! \! \!} \\
   {\! \! \!} 2n_2D_2g_{12}+n_3D_3g_{12}+n_3D_2g_{13}  {\! \! \!} \\
   {\! \! \!} n_2D_3g_{12}+n_2D_2g_{13}+2n_3D_3g_{13} {\! \! \!}
\end{array}_{\!0} {\! \!}={\! \!}
\begin{array}\lgroup{c}\rgroup
  {\! \! \!} 0 {\! \! \!}  \\
  {\! \! \!} 0 {\! \! \!}  \\
  {\! \! \!} 0 {\! \! \!}
\end{array}.
\]
  We will look for the solution of the system of equations in the form $g_1=(g_{11},0,0)$.
  From this system of equations, we examine the next boundary problem:
\begin{align} \label{n43}
      \left \{
        \begin{matrix}
          \qquad\mu\left(D^2_2+D^2_3\right)g_{11} & = & 0
          \\
          \left.{\mu(n_2D_2+n_3D_3)g_{11}}\right|_{\partial\mathcal{D}} & = & -i\mu(n_2x_3-n_3x_2).
        \end{matrix}
       \right.
\end{align}
  Thus, in order to find $g_{11}$, one has to prove the solvability of the problem (\ref{n43}).
  This problem is solvable only, when we have the next equality:
\[
  -i\mu(n_2x_3-n_3x_2,1)_0=0.
\]
  In fact,
\[
  -i\mu(n_2x_3-n_3x_2,1)_0=-\mu(in_2x_3,1)_0+\mu(in_3x_2,1)_0=
\]
\[
  =\mu(x_3,iD_21)-\mu(iD_2x_3,1)-\mu(x_2,iD_31)+\mu(iD_3x_2,1)=0.
\smallskip
\]
  We find that the boundary problem (\ref{n43}) is solvable and thus, one can take vector
  $g_1=(g_{11},0,0)$, where $g_{11}$ is some solution of the problem
  (\ref{n43}).\\

  Let $g_2$~--- be the next associated eigenvector, then $g_2$ can be defined
  by the system of equations:
\[
  \mathcal{C}_0(0)g_2+\frac{\partial}{\partial\alpha}\mathcal{C}_0(0)g_1+
  \frac{1}{2}\frac{\partial^2}{\partial\alpha^2}\mathcal{C}_0(0)g_0=
\]
\[
  =-\mu \! \!
\begin{array} \lgroup{ccc}\rgroup
   {\! \! \!}  D^2_2+ D^2_3  &        0       &         0                 \\
                    0        &  2D^2_2+D^2_3  &      D_2D_3                \\
                    0        &     D_2D_3     &   D^2_2+2D^2_3  {\! \! \!}
\end{array} \! \! \! \!
\begin{array}\lgroup{c}\rgroup
  {\! \! \!}  g_{21} {\! \! \!}  \\
  {\! \! \!}  g_{22} {\! \! \!}  \\
  {\! \! \!}  g_{23} {\! \! \!}
\end{array} \!
  \! -i\mu \! \!
\begin{array}\lgroup{ccc}\rgroup
   {\! \! \!} 0   &   D_2   &   D_3 {\! \! \!}  \\
   {\! \! \!} D_2 &    0    &    0  {\! \! \!}   \\
   {\! \! \!} D_3 &    0    &    0  {\! \! \!}
\end{array} {\! \! \! \!}
\begin{array}\lgroup{c}\rgroup
   {\! \! \! \! \!}  g_{11}   {\! \! \! \! \!} \\
                       0                  \\
                       0
\end{array} \! +
\]

\medskip
\[
  +\, \mu {\! \!}
\begin{array}\lgroup{ccc}\rgroup
   {\! \! \!} 2  &   0  &   0  {\! \! \!} \\
   {\! \! \!} 0  &   1  &   0  {\! \! \!} \\
   {\! \! \!} 0  &   0  &   1  {\! \! \!}
\end{array} {\! \! \! \! \!}
\begin{array}\lgroup{c}\rgroup
   {\! \! \! \! }  0   {\! \! \!} \\
                   x_3             \\
    {\! \! \! \! \! \! } -x_2   {\! \! \! \! }
\end{array} \! \!
 =-\mu {\! \!}
\begin{array}\lgroup{c}\rgroup
   {\! \! \! \!} \left(D^2_2+D^2_3\right)g_{21} {\! \! \! \!}   \\
   {\! \! \!} 2D^2_2g_{22}+D^2_3g_{22}+D_2D_3g_{23}+iD_2g_{11}-x_3 {\! \! \!} \\
   {\! \! \!} D_2D_3g_{22}+D^2_2g_{23}+2D^2_3g_{23}+iD_3g_{11}+x_2 {\! \! \!}
\end{array} {\! \! } = {\! \!}
\begin{array}\lgroup{c}\rgroup
  {\! \! \!} 0 {\! \! \!} \\
  {\! \! \!} 0 {\! \! \!} \\
  {\! \! \!} 0 {\! \! \!}
\end{array}\! ;
\]

\bigskip
\[
  \left.{\mathcal{U}(0)g_2+\frac{\partial}{\partial\alpha}\mathcal{U}(0)g_1}
  \right|_{\partial\mathcal{D}}=
\]
\[
  = \! \mu \!
\begin{array} \lgroup{ccc}\rgroup
  {\! \! \!} n_2D_2+n_3D_3    &           0        &       0            \\
         0          &   2n_2D_2+n_3D_3   &    n_3D_2            \\
         0          &        n_2D_3      & n_2D_2+2n_3D_3 {\! \! \!}
\end{array} \! \! \! \!
\begin{array}\lgroup{c}\rgroup
  {\! \! \!} g_{21} {\! \! \!} \\
  {\! \! \!} g_{22} {\! \! \!} \\
  {\! \! \!} g_{23} {\! \! \!}
\end{array}
   \! \! + \, i\mu \!
\begin{array} \lgroup{ccc}\rgroup
  {\! \! \!} 0 &    n_2    &    n_3 {\! \! \!} \\
  {\! \! \!} 0 &     0     &   0    {\! \! \!} \\
  {\! \! \!} 0 &     0     &   0    {\! \! \!}
\end{array} \! \! \! \!
\begin{array}\lgroup{c}\rgroup
  {\! \! \!}    g_{11}  {\! \! \!}  \\
  {\! \! \!}      0    {\! \! \!}  \\
  {\! \! \!}      0   {\! \! \!}
\end{array} {\! \!}=
\]
\medskip
\[
  = \mu
\begin{array}\lgroup{c}\rgroup
  {\! \! \!}   (n_2D_2+n_3D_3)g_{21} {\! \! \!}  \\
  {\! \! \!}   2n_2D_2g_{22}+n_3D_3g_{22}+n_3D_2g_{23}  {\! \! \!}  \\
  {\! \! \!}   n_2D_3g_{22}+n_2D_2g_{23}+2n_3D_3g_{23} {\! \! \!}
\end{array}_{\!0} {\! \!}={\! \!}
\begin{array}\lgroup{c}\rgroup
  {\! \! \!} 0 {\! \! \!}  \\
  {\! \! \!} 0 {\! \! \!}  \\
  {\! \! \!} 0 {\! \! \!}
\end{array}.
\]
  The system of equation have solution $g_2$ if the next boundary problem
  is solvable:
\begin{align} \label{n44}
\begin{matrix}
 & - \mu {\! \!}
  \begin{array}\lgroup{cc}\rgroup
     {\! \! \!} 2D^2_2+D^2_3   &     D_2D_3     \\
         D_2D_3             &  D^2_2+2D^2_3 {\! \! \!}
  \end{array} {\! \! \! \! \!}
  \begin{array}\lgroup{c}\rgroup
  {\! \! \!} g_{22} {\! \! \!} \\
  {\! \! \!} g_{23} {\! \! \!}
  \end{array}  {\! \! \!} =
  i\mu {\! \!}
  \begin{array}\lgroup{cc}\rgroup
  {\! \! \!} D_2  &  0  {\! \! \!}  \\
  {\! \! \!} D_3  &  0  {\! \! \!}
  \end{array} {\! \! \! \! \!}
  \begin{array}\lgroup{c}\rgroup
  {\! \! \!} g_{11} {\! \! \!} \\
  {\! \! \!}   0    {\! \! \!}
  \end{array}  {\! \! \!}
  -\mu {\! \!}
  \begin{array}\lgroup{cc}\rgroup
  {\! \! \!} 1  &  0  {\! \! \!}  \\
  {\! \! \!} 0  &  1  {\! \! \!}
  \end{array} {\! \! \! \! \!}
  \begin{array}\lgroup{c}\rgroup
  {\! \! \!}   x_3 {\! \! \!} \\
  {\! \! \! \! \!}  -x_2    {\! \! \!}
  \end{array}=f
\\
\smallskip
\\
 & \mu {\! \!}
  \begin{array}\lgroup{cc}\rgroup
     {\! \! \!} 2n_2D_2+n_3D_3   &     n_3D_2     \\
              n_2D_3             &  n_2D_2+2n_3D_3 {\! \! \!}
  \end{array} {\! \! \! \! \!}
  \begin{array}\lgroup{c}\rgroup
  {\! \! \!} g_{22} {\! \! \!} \\
  {\! \! \!} g_{23}    {\! \! \!}
  \end{array}_{\partial\mathcal{D}}  {\! \! \!} =
  -\,i  {\! \!}
  \begin{array}\lgroup{cc}\rgroup
  {\! \! \!} 0  &  0  {\! \! \!}  \\
  {\! \! \!} 0  &  0  {\! \! \!}
  \end{array} {\! \! \! \! \!}
  \begin{array}\lgroup{c}\rgroup
  {\! \! \!} g_{11} {\! \! \!} \\
  {\! \! \!}   0    {\! \! \!}
  \end{array} = h
\end{matrix}
\end{align}\\
  Therefore, the boundary problem (\ref{n44}) is solvable if and
  only  if we have the relations:
\[
  (f,e_k)+(h,e_k)_0=0, \, k=1,2,3,
\]
  where $e_1=(1,0)$, $e_2=(0,1)$ and  $e_3=(x_3,-x_2)$ is the solution of the
  homogeneous boundary problem (\ref{n44}), i.e., when $f=h=0$. We examine if
  this case is true for the vector $e_3=(x_3,-x_2)$, then
\[
  (f,e_3)+(h,e_3)_0=i\mu(D_2g_{11},x_3)-i\mu(D_3g_{11},x_2)-\mu\|(x_3,-x_2)\|^2=
\]
\[
  =-\mu(g_{11},in_2x_3)_0+\mu(g_{11},in_3x_2)_0-\mu\|(x_3,-x_2)\|^2=
\smallskip
\]
\[
  =\mu(g_{11},-i(n_2x_3-n_3x_2))_0-\mu\|(x_3,-x_2)\|^2=\mu(g_{11},(n_2D_2+n_3D_3)g_{11})_0\,-
\smallskip
\]
\[
  -\mu\|(x_3,-x_2)\|^2=\mu(g_{11},n_2D_2g_{11})+\mu(g_{11},n_3D_3g_{11})-\mu\|(x_3,-x_2)\|^2=
\smallskip
\]
\[
  =\mu(D_2g_{11},D_2g_{11})+\mu(D_3g_{11},D_3g_{11})+\mu(g_{11},(D^2_2+D^2_3)g_{11})-\mu\|(x_3,-x_2)\|^2=
\smallskip
\]
\[
  =\mu\left[\,\|D_2g_{11}\|^2+\|D_3g_{11}\|^2-\|(x_3,-x_2)\|^2\right].
\smallskip
\]
  On the other hand, since the vectors $(D_2g_{11},D_3g_{11})$ and
  $(x_3,-x_2)$  are not proportional, by the Cauchy -- Bunyakovskii inequality we have:
\[
  \|D_2g_{11}\|^2+\|D_3g_{11}\|^2=i\langle(D_2g_{11},D_3g_{11}),(x_3,-x_2)\rangle<
  \|(D_2g_{11},D_3g_{11}\|\|(x_3,-x_2)\|=
\]
\[
  =\left[\,\|D_2g_{11}\|^2+\|D_3g_{11}\|^2\right]^{\frac{1}{2}}\|(x_3,-x_2)\|,
\]
  whence
\[
  \left[\,\|D_2g_{11}\|^2+\|D_3g_{11}\|^2\right]^{\frac{1}{2}}<\|(x_3,-x_2)\|\, \Rightarrow
  \, \mu\left[\,\|D_2g_{11}\|^2+\|D_3g_{11}\|^2-\|(x_2,-x_2)\|^2\right]<0.
\smallskip
\]
  For this inequality the boundary problem (\ref{n44})
  and at the same time the system of equations for the
  vector are not solvable. Consequently, the chain of the vectors:
\[
  \qquad g_0=(0,x_3,-x_2) \;\; \emph{and} \;\; g_1=(g_{11},0,0)\text{~---}\,\emph{is maximal}.
\]

  The theorem is proved.

\bigskip
  According to theorems~\ref{T2} and~\ref{T3} in the static case, i.e., when $\omega=0$,
  for the isotropic structure and when the Lame constants are $\mu>0$ and $\lambda=0$,
  we obtain that if to all eigen-and associated vectors of the
  spectral problem $\mathcal{L}_0(\alpha)$, corresponding to the eigenvalue $\alpha_n$
  from the upper semiplane, one adjoint the vectors
  $v_0, \: u_0, \: u_1, \: w_0, \: w_1 \; \text{and} \; \, g_0$  from (\ref{n37}),
  then we obtain a complete and minimal system in the space $W^1_2(\mathcal{D})
  \; (\text{and in} \: L_2(\mathcal{D}))$.\\

\textbf{6.~Theorems of motion for the real eigenvalues}

\bigskip
  The pencils $L_\omega(\alpha)$ and $L^0_\omega(\alpha)$ are particular cases of
  the pencil $L(\alpha,\theta)$, see \cite{KAGyOMB81}. Therefore, taking into account
  theorem~\ref{T3} and theorem 3.3 (the case A.1) \cite{KAGyOMB81},
  recalling corollary 2.4 \cite{KAGyOMB81}, we obtain:

\begin{theorem} \label{T4}
  In a sufficiently small neighborhood of the point $\alpha=0$ for
  $0<\omega^2<\varepsilon, \; \varepsilon$~--- is small the spectral problem
  $\mathcal{L}_\omega(\alpha)$ has four pairs
  $(\alpha^{\pm}_j(\omega^2),\psi_j^\pm(\omega^2)), \: j=1,\dots,4,$
  of the firs and the second kind, which move at increase of
  $0<\omega^2<\varepsilon$ to the right and to the left,
  respectively, of the point $\alpha=0$. In addition, one has two more
  pairs $(\alpha^k(\omega^2),\psi^k(\omega^2)), k=\pm1,\pm2,$ which shift
  into the complex plane along the curves $\mu^k$, tangent to the imaginary axis.
\end{theorem}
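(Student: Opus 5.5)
The plan is to treat $\theta=\omega^2$ as a small real parameter and regard $L_\omega(\alpha)=L(\alpha,\theta)$ as a holomorphic self-adjoint perturbation of the static pencil $L_0(\alpha)$, exactly as in \cite{KAGyOMB81}. We have $\partial_\theta L(\alpha,\theta)=-P^{\frac12}\mathcal{R}P^{\frac12}$, which is negative definite because $\rho\geq m>0$. By Theorem~\ref{T1} and the finiteness of multiplicities, $\alpha=0$ is an isolated eigenvalue of $L_0$. By Theorem~\ref{T3} its root subspace has total algebraic multiplicity $2+2+4+4=12$: two chains of length $2$ (headed by $v_0,g_0$) and two of length $4$ (headed by $u_0,w_0$). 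Hence, for $0<\theta<\varepsilon$, exactly $12$ eigenvalues of $L(\cdot,\theta)$, counted with multiplicity, lie in a small disc around $0$. The theorem claims these split as $8$ real eigenvalues (four pairs moving right and left) and $4$ non-real ones. This count is consistent with a Puiseux splitting in which each chain of length $m$ produces $m$ branches $\alpha\sim c_j\theta^{1/m}$.

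First, I will bring the root vectors into canonical form, as required in corollary 2.4 of \cite{KAGyOMB81}. The chains of Theorem~\ref{T3} should be modified, by adding lower-order vectors of the other chains, so that the indefinite form $\big(\partial_\alpha L_0(0)y_j+\tfrac12\partial^2_\alpha L_0(0)y_{j-1},z_0\big)$ pairing ends of chains with heads becomes block diagonal. The numbers that survive on the diagonal are precisely the nonzero obstructions computed in the proof of Theorem~\ref{T3}:
\begin{itemize}
\item $-2\mu|\mathcal{D}|$ for the $v$-chain;
\item $-2\mu\|x_2-k_2\|^2$ for the $u$-chain;
\item $-2\mu\|x_3-k_3\|^2$ for the $w$-chain;
\item $\mu\big[\|\nabla g_{11}\|^2-\|(x_3,-x_2)\|^2\big]<0$ for the $g$-chain.
\end{itemize}
All four are transported to $L_0$ through the relation $\mathcal{L}_0^{-1}=P^{\frac12}L_0^{-1}P^{\frac12}$. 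They are nonzero and of one sign, and this gives the sign characteristic of each chain. I must check that the off-diagonal couplings between the $u$- and $w$-chains can be removed; I expect this because they share $u_2=w_2=g_0$ and the relevant integrals vanish by the choice of $k_2,k_3$.

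Next, I reduce to a $12\times12$ matrix problem by Lyapunov--Schmidt reduction (or the Schur complement on the root subspace), and compute the leading Newton-diagram terms. For a chain $y_0,\dots,y_{m-1}$ with obstruction $\gamma$, the leading equation should read
\[
\gamma\,\alpha^{m}+\theta\,(\mathcal{R}y_0,y_0)+\dots=0 .
\]
This gives $\alpha^m=-\theta(\mathcal{R}y_0,y_0)/\gamma$, where the right-hand side is positive since $\gamma<0$ and $(\mathcal{R}y_0,y_0)>0$.
\begin{itemize}
\item For $m=2$ this yields two real roots $\pm(c\theta)^{1/2}$. The $v$- and $g$-chains therefore give two real pairs.
\item For $m=4$ the roots $(c\theta)^{1/4}\{1,i,-1,-i\}$ give one real pair and one pair leaving along curves tangent to the imaginary axis. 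The $u$- and $w$-chains therefore give the remaining two real pairs and the complex pairs $\alpha^{k}$, $k=\pm1,\pm2$.
\end{itemize}
Self-adjointness and the symmetry $\alpha\mapsto-\overline\alpha$ (Section 4) ensure that the real branches stay real, not merely real to leading order. Theorem~3.3, case A.1, of \cite{KAGyOMB81} then supplies this branch structure together with the classification of the real branches into the first and second kind, moving right and left respectively, via the sign of $(\partial_\theta L\,\psi,\psi)$ relative to $(\partial_\alpha L\,\psi,\psi)$.

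The main obstacle is the canonical-basis step. I must verify that the couplings between different chains do not alter the leading Newton polygon, for instance by producing $\alpha^3$ or mixed $\theta\alpha$ terms that would merge the branches of the two length-$4$ chains with those of the length-$2$ chains. I would first try to show directly that the relevant cross pairings vanish: use the orthogonality of constants to $x_2-k_2$ and $x_3-k_3$, together with the identity $(n_2x_3-n_3x_2,1)_0=0$. If some coupling does not vanish, I would fall back on the general hypothesis of case A.1, namely that the matrix of $\partial_\theta L$ restricted to the eigenvectors is definite. That hypothesis holds here since $\mathcal{R}>0$, and under it the reference asserts that the splitting is governed by the partial multiplicities alone.
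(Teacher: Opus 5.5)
Your route is the paper's own. Its proof of Theorem~\ref{T4} consists of feeding the chain structure of Theorem~\ref{T3} (lengths $2,2,4,4$, total multiplicity $12$) into Theorem~3.3 (case A.1) and Corollary~2.4 of \cite{KAGyOMB81}, and that is where your argument also ends up.

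\textbf{The decoupling step fails as planned.} The pairing of the end of the $u$-chain with $w_0$ is not killed by the choice of $k_2,k_3$. Redo the paper's computation for the $u_4$-system (\ref{n40}), but test against $e_2=(0,1)$ instead of $e_1$:
\[
(f,e_2)+(h,e_2)_0=i\mu\int_{\mathcal D}D_3u_{31}\,dx+\mu\int_{\mathcal D}x_2\,dx=-2\mu\int_{\mathcal D}(x_2-k_2)(x_3-k_3)\,dx .
\]
The $w$-chain tested against $e_1$ gives the same value. This is $-2\mu$ times the product of inertia of $\mathcal D$. The constants $k_2,k_3$ only remove first moments, so it vanishes only when the axes are principal.

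The repair is to rotate $(u_0,w_0)$ to the principal axes. The matrix $-2\mu\bigl(\int_{\mathcal D}(x_i-k_i)(x_j-k_j)\,dx\bigr)_{i,j=2,3}$ is negative definite, being $-2\mu$ times a Gram matrix of linearly independent functions. Also $(\mathcal R u_0,w_0)=0$ and $(\mathcal R u_0,u_0)=(\mathcal R w_0,w_0)$. So after rotation both diagonal entries stay negative.

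The pairings of the $u$- and $w$-chain ends with $g_0$ are also nonzero in general. Since they couple chains of different lengths, you remove them by adding multiples of $g_0,g_1$ to $u_2,u_3$ (resp. $w_2,w_3$). This leaves the pairings with $u_0,w_0$ unchanged, because the $g$-chain obstruction is orthogonal to $e_1$ and $e_2$.

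\textbf{Your fallback is misstated.} Definiteness of $\partial_\theta L$ together with the partial multiplicities does not determine the splitting. A chain of even length gives real roots only if its sign characteristic is positive. With the opposite sign, a length-$2$ chain would give a purely imaginary pair (compare $-\alpha^2-\theta=0$).

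\textbf{A shorter route avoids the canonical basis.} By Lemma~\ref{L2}, $\mathrm{I}(\xi)\ge0$ and $A_0>0$, so $(L_0(\alpha)\xi,\xi)\ge0$ for every real $\alpha$. Hence the eigenvalues $\theta_j(\alpha)$ of $L_0(\alpha)\xi=\theta R\xi$, with $R=P^{\frac12}\mathcal R P^{\frac12}>0$, are real-analytic for real $\alpha$ (Rellich) and nonnegative. Four of them vanish at $\alpha=0$, to the orders $2,2,4,4$ of Theorem~\ref{T3}, so their leading coefficients are positive.

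The real eigenvalues of $\mathcal L_\omega$ near $0$ are exactly the roots of $\theta_j(\alpha)=\omega^2$. For small $\omega^2>0$ each branch gives two simple roots, one moving right and one moving left, eight in all. Their kind is read off from $(\partial_\alpha L_0(\alpha)\psi,\psi)=\theta_j'(\alpha)(R\psi,\psi)$. The other four of the twelve eigenvalues are non-real. Analytic continuation of $\theta_j(\alpha)=c_j\alpha^4+\dots$ places them near $\pm i(\omega^2/c_j)^{1/4}$.

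This argument also covers realness when two leading coefficients coincide, for example equal principal moments as for a disc. There two eigenvalues sit near the same point, and your argument through the symmetries $\alpha\mapsto\overline{\alpha}$ and $\alpha\mapsto-\overline{\alpha}$ cannot rule out a conjugate pair.
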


  The total algebraic multiplicity of the real eigenvalues of the
  spectral problems $\mathcal{L}_\omega(\alpha)$ and $\mathcal{L}^0_\omega(\alpha)$
  admites the next estimates, see \cite{KAGyOMB81},

\begin{align} \label{n45}
      \sum_{\alpha\in{\sigma(\mathcal{L}_\omega)\cap\textbf{R}}}\chi(\alpha)
      \geq 2N^0_{\mathcal{L}_\omega}(\omega^2)\geq8;
\end{align}
\begin{align} \label{n46}
      \sum_{\alpha\in{\sigma(\mathcal{L}^0_\omega)\cap\textbf{R}}}\chi(\alpha)
      \geq 2N^0_{\mathcal{L}^0_\omega}(\omega^2). \! \! \qquad
\end{align}
  At the subsequent increase of the parameter $\omega^2>0$ the
  qualitative picture of the motion of the real eigenvalues of these
  problems take place according to the next theorem:

\begin{theorem} \label{T5}
  There exist a sequence of numbers \,
  $\omega^2_1<\omega^2_2<\dots<\omega^2_n<\dots,\;\omega^2_n\rightarrow\infty,\;\omega_1=0$,
  for which the problem $\mathcal{L}_{\omega_k}(\alpha)$ has  on the real axis
  at least one neutral eigenvalue $\alpha_0$ (i.e., such that there corresponds to it an eigenfunction
  $y_0$, having associated functions). In all the remaining points $\omega^2\neq\omega^2_k$
  the problem $\mathcal{L}_\omega(\alpha)$ has on the real axis only pairs of the firs and the second
  kind, moving to the right and to the left, respectively, at the increase of
  $\omega^2\in(\omega^2_k,\omega^2_{k+1}), k=1,2,\dots$\:.
\end{theorem}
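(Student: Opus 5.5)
We plan to obtain Theorem~\ref{T5} from the general theory of the pencil $L(\alpha,\theta)$ of \cite{KAGyOMB81}, of which $L_\omega(\alpha)$ is a particular case with $\theta=\omega^2$. The relevant structure is that $L_\omega(\alpha)$ is a self-adjoint quadratic pencil that depends analytically and monotonically on $\omega^2$: $\frac{\partial}{\partial\omega^2}L_\omega(\alpha)=-P^{\frac{1}{2}}\mathcal{R}P^{\frac{1}{2}}<0$. For a real eigenvalue $\alpha_0$ with eigenvector $\xi_0$ we set
\[
\delta(\xi_0)=(L'_\omega(\alpha_0)\xi_0,\xi_0).
\]
We call the eigenvalue of the first kind if $\delta>0$, of the second kind if $\delta<0$, and neutral if $\delta=0$. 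A neutral eigenvalue is exactly one at which the eigenvector admits an associated vector. Differentiating $(L_\omega(\alpha(\omega^2))\xi,\xi)=0$ gives
\[
\frac{d\alpha}{d\omega^2}=\frac{(P^{\frac{1}{2}}\mathcal{R}P^{\frac{1}{2}}\xi,\xi)}{\delta(\xi)}.
\]
So eigenvalues of the first kind move to the right and those of the second kind to the left as $\omega^2$ increases. This is the same computation as in Theorem~\ref{T4}, which provides the local picture near $\omega=0$. There, at $\omega_1=0$, $\alpha=0$ is neutral by Theorem~\ref{T3}, since its chains have length greater than one.

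The first key step is to show that the set $\Lambda=\{\omega^2>0:\ \mathcal{L}_\omega(\alpha)\text{ has a neutral real eigenvalue}\}$ is discrete in $[0,\infty)$. Fix a compact interval $[a,b]$ of values $\omega^2$. By Theorem~\ref{T1} and the resolvent estimate (\ref{n36}), the real eigenvalues for $\omega^2\in[a,b]$ lie in a bounded interval $|\alpha|\le N(b)$: real $\alpha$ with $|\alpha|>N$ lie in $\Omega_{\varepsilon,N}$, where the pencil is invertible, and $N$ can be chosen uniformly for $\omega^2$ in a compact set because $\omega^2\mathcal{R}$ is a lower-order term. The problem is then reduced to the analytic two-parameter function
\[
F(\alpha,\omega^2)=\det\bigl(\text{finite-dimensional reduction of }L_\omega(\alpha)\text{ near a compact piece of }\textbf{R}\bigr),
\]
using the Keldysh/Gohberg--Sigal factorization, which applies because $P\in\mathfrak{G}_q$. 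Neutral real eigenvalues are points where $F=\partial_\alpha F=0$ with $\alpha$ real.

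We then invoke the analytic perturbation theory of self-adjoint pencils (Rellich type), i.e.\ corollary 2.4 and theorem 3.3 of \cite{KAGyOMB81}. Real eigenvalues form finitely many real-analytic branches $\alpha_j(\omega^2)$ over $[a,b]$. A neutral point corresponds either to the collision of a branch of the first kind with one of the second kind, or to a branch leaving or entering the real axis. Because everything is analytic and not identically degenerate, such points are isolated. The non-degeneracy holds because $\frac{d\alpha}{d\omega^2}\neq0$ on every branch, since $\mathcal{R}>0$, so no branch is constant in a way that would make $\delta$ vanish identically. Enumerating these isolated points gives $\omega_1=0<\omega^2_2<\dots$.

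The second step shows that the sequence is infinite with $\omega^2_n\to\infty$. Here we use (\ref{n45}): the number of real eigenvalues is at least $2N^0_{\mathcal{L}_\omega}(\omega^2)$, where $N^0$ counts negative eigenvalues of $L_\omega(0)$, i.e.\ eigenvalues of the cross-sectional problem $\mathcal{C}-\omega^2\mathcal{R}$ below $0$. By Weyl asymptotics, $N^0\to\infty$ as $\omega^2\to\infty$. New real eigenvalues can appear only through neutral points, either by coming from the complex plane or by splitting of a neutral eigenvalue, because non-neutral real eigenvalues persist under perturbation. Hence infinitely many $\omega_k$ occur, and they are unbounded. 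Between consecutive $\omega^2_k$ all real eigenvalues are of the first or second kind and move as described, by the derivative formula above.

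The main obstacle is the discreteness step. One must exclude the possibility that neutral eigenvalues persist on an interval of $\omega^2$, or accumulate at a finite $\omega^2$. I would first try to show, via the Puiseux expansion of $F(\alpha,\omega^2)=0$ at a neutral point together with the sign-definiteness of $\partial_{\omega^2}L_\omega=-P^{\frac{1}{2}}\mathcal{R}P^{\frac{1}{2}}$, that the branches through such a point are non-neutral for all nearby $\omega^2\neq\omega^2_k$. This is case A.1 of theorem 3.3 in \cite{KAGyOMB81}, applied at every $\omega_k$ rather than only at $0$.
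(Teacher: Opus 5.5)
Your top-level plan is the same as the paper's. The paper obtains Theorem~\ref{T5} by viewing $L_\omega(\alpha)$ as the case $\theta=\omega^2$ of the Kostyuchenko--Orazov pencil \cite{KAGyOMB81}, together with (\ref{n45}), and gives no further argument. The self-contained discreteness step you add, however, does not work as written.

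(i) Your notion of neutrality is not the statement's. The equation $L_\omega(\alpha_0)\xi_1+L'_\omega(\alpha_0)\xi_0=0$ is solvable iff $L'_\omega(\alpha_0)\xi_0\perp\ker L_\omega(\alpha_0)$, not merely iff $\delta(\xi_0)=0$. Where a pair of the first kind crosses a pair of the second kind, the form $(L'_\omega(\alpha_0)\xi,\xi)$ on the kernel is indefinite but nondegenerate. Some eigenvectors then have $\delta=0$, but none has an associated vector, so such an $\omega^2$ need not be one of the $\omega_k^2$.

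(ii) The real eigenvalues are not real-analytic functions of $\omega^2$ on $[a,b]$. At exactly the points you want to locate they fold with square-root branching and leave the real axis; Theorem~\ref{T4} already shows this at $\omega=0$. So Rellich's theorem is unavailable in this direction.

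(iii) $F=\partial_\alpha F=0$ holds at every multiple real eigenvalue, semisimple or not. For a disk with radial $\rho$, rotational and reflection invariance make the real eigenvalues with angular number $n\geq1$ double on whole $\omega^2$-intervals, so this set contains entire curves. The fact that $d\alpha/d\omega^2\neq0$ does nothing to stop a discriminant from vanishing identically.

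(iv) The paper uses case A.1 of theorem~3.3 of \cite{KAGyOMB81} only at $\alpha=0$, $\omega=0$, where the Jordan structure is known explicitly from Theorem~\ref{T3}. Nothing tells you that an arbitrary $\omega_k$ is of that type.

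The missing idea is to exchange the roles of the two parameters.

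\emph{Dispersion curves.} For real $\alpha$, the problem $\mathcal{C}_0(\alpha)v=\theta\mathcal{R}v$, $\mathcal{U}(\alpha)v|_{\partial\mathcal{D}}=0$ is self-adjoint with compact resolvent and depends holomorphically on $\alpha$ with fixed form domain $W^1_2(\mathcal{D})$. Rellich's theorem then gives real-analytic curves $\theta=\theta_j(\alpha)$ on all of $\mathbf{R}$, with analytic eigenvectors $\xi_j(\alpha)$ satisfying $(P^{\frac12}\mathcal{R}P^{\frac12}\xi_j,\xi_m)=\delta_{jm}$. A real $\alpha_0$ is an eigenvalue of $\mathcal{L}_\omega$ iff $\theta_j(\alpha_0)=\omega^2$ for some $j$.

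\emph{Characterization of neutral points.} Differentiate $L_0(\alpha)\xi_j(\alpha)=\theta_j(\alpha)P^{\frac12}\mathcal{R}P^{\frac12}\xi_j(\alpha)$ and pair with $\xi_m(\alpha_0)$. In this basis of the kernel, the form $(L'_\omega(\alpha_0)\xi,\xi)$ is diagonal with entries $\theta_j'(\alpha_0)$. Hence an associated vector exists iff $\theta_j'(\alpha_0)=0$ for some curve through $(\alpha_0,\omega^2)$; this is your identity $\alpha'\delta=(P^{\frac12}\mathcal{R}P^{\frac12}\xi,\xi)$ read the other way. So the $\omega_k^2$ are exactly the critical values of the $\theta_j$.

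\emph{Conclusions.}
\begin{itemize}
\item Your bounded-region step gives $\theta_j(\alpha)>b$ for $|\alpha|>N(b)$, so only finitely many curves enter $\{\theta\le b\}$.
\item Each $\theta_j$ is analytic and non-constant, so the critical values are locally finite.
\item Each $\theta_j$ attains its minimum, so the critical values are unbounded.
\item Between consecutive critical values, every real eigenvalue lies on a strictly monotone arc. Increasing arcs are pairs of the first kind moving right, and decreasing arcs are pairs of the second kind moving left.
\item Transversal crossings of arcs are visibly harmless: the pairs pass through each other without Jordan chains.
\end{itemize}
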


  A similar assertion holds for
  $\mathcal{L}^0_\omega(\alpha)$.\\

\bigskip
\textbf{7.~Spectral problems
  $\boldsymbol{\mathcal{L}_\omega(\alpha)}$ and
  $\boldsymbol{\mathcal{L}^0_\omega(\alpha)}$ in the semi-band}

\bigskip
  We examine the equations of the steady-state oscillations in the semiband
  $\prod=[\,0,1]\times\textbf{R}^+_{x_1}$ where
  $[\,0,1]\subset\textbf{R}_{x_2}$, with a free (or fixed) boundary
  in the isotropic structure and when the Lame constant are $\mu>0$ and $\lambda=0$,
  then, separating the variables $u(x_1,x_2)=e^{i\alpha x_1}v(x_2)$,
  as in the case of the semi-cylinder $\Omega$, we arrive at the spectral problem
  $\mathcal{L}_\omega(\alpha)$ (or $\mathcal{L}^0_\omega(\alpha)$)
  of the form (\ref{n17})--(\ref{n18}), except that the now:
\[
  \mathcal{A}=\mu \!
\begin{array}\lgroup{cc}\rgroup
    {\! \! \!} 2  &  0 {\! \! \!}  \\
    {\! \! \!} 0  &  1 {\! \! \!}
\end{array} {\! \!} , \;
  \mathcal{B}=i\mu \!
\begin{array}\lgroup{cc}\rgroup
    {\! \! \!} 0    &  D_2 {\! \! \!}  \\
    {\! \! \!} D_2  &  0   {\! \! \!}
\end{array} {\! \!} , \;
  \mathcal{C}=-\mu \!
\begin{array}\lgroup{cc}\rgroup
    {\! \! \!} D^2_2   &    0     {\! \! \!}  \\
    {\! \! \!}   0     &  D^2_2   {\! \! \!}
\end{array} {\! \!} , \;
  \mathcal{R}=
\begin{array}\lgroup{cc}\rgroup
    {\! \! \!} \rho   &    0   {\! \! \!}  \\
    {\! \! \!}   0    &  \rho  {\! \! \!}
\end{array} {\! \!} , \;
\]
\[
  \mathcal{M}=\mu \!
\begin{array}\lgroup{cc}\rgroup
    {\! \! \!}  n_2D_2   &    0       {\! \! \!}  \\
    {\! \! \!}     0     &  2n_2D_2   {\! \! \!}
\end{array} {\! \!} , \;
  \mathcal{B}=\mu \!
\begin{array}\lgroup{cc}\rgroup
    {\! \! \!}  0    &  n_2 {\! \! \!}  \\
    {\! \! \!}  0    &   0   {\! \! \!}
\end{array} {\! \!} , \;
\]

\[
  \text{where}\;\, n_2=\pm 1,\, v=(v_1,v_2),\, D_2=\frac{\partial}{\partial\alpha^2}\,.
\]
  With the corresponding simplifications, all the previous results
  are preserved for the spectral problems
  $\mathcal{L}_\omega(\alpha)$ and $\mathcal{L}^0_\omega(\alpha)$.
  For example, one has

\begin{theorem} \label{T6}
  For the isotropic structure, when the Lame constants are $\mu>0$ and
  $\lambda=0$, the spectral problem $\mathcal{L}_0(\alpha)$,
  at the point $\alpha=0$, has only two chains of eigen-and associated vectors:
  \begin{align} \label{n47}
    \begin{matrix}
    &  v_0=(1,0), &  v_1=(0,1);  &\\
\\
    u_0=(0,1), &  u_1=-i(x_2-\frac{1}{2},0), &  u_2=(1,0), &  u_3=-i(\frac{1}{3}x^3_2-\frac{1}{2}x^2_2,0).\\
    \end{matrix}
  \end{align}
\end{theorem}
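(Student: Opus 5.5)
The plan is to repeat, in the simpler one–dimensional setting $\mathcal{D}=[0,1]$, the computation carried out in the proof of Theorem~\ref{T3}. Here $n_2=-1$ at $x_2=0$ and $n_2=+1$ at $x_2=1$. For $\lambda=0$ the operator $\mathcal{C}$ is a diagonal matrix of second derivatives $-\mu D_2^2$ (up to constant factors on the diagonal), and $\mathcal{M}$ is diagonal with entries proportional to $n_2D_2$. Boundary terms on $\partial\mathcal{D}=\{0,1\}$ are just point evaluations, and every solvability condition becomes an integral identity of the form $\int_0^1 \phi''\,dx=\phi'(1)-\phi'(0)$.

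\textbf{Step 1 (eigenvectors).} Solve $\mathcal{C}v=0$ with $\mathcal{M}v=0$ at $x_2=0,1$. Each component satisfies $v_j''=0$ and $v_j'(0)=v_j'(1)=0$, so it is constant. Hence the only eigenvectors at $\alpha=0$ are $v_0=(1,0)$ and $u_0=(0,1)$. This is the analogue of Lemma~\ref{L2}; the rotation vector $g_0$ has no counterpart in one dimension, so there are exactly two chains.

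\textbf{Step 2 (the chain of $v_0$).} The equation for the first associated vector is
\[
\mathcal{C}_0(0)v_1-\mathcal{B}v_0=0,\qquad \bigl(\mathcal{M}v_1+i\mathcal{N}v_0\bigr)\big|_{\partial\mathcal{D}}=0 .
\]
Since $v_0$ is constant and its second component vanishes, this system is homogeneous. So $v_1$ is a constant vector, and we take $v_1=(0,1)$ as in (\ref{n47}). For $v_2$, the term $\mathcal{A}v_0=\mu(2,0)$ enters the first component, which gives $v_{21}''=2$. The boundary condition involves $i\mu n_2 v_{12}$, where $v_{12}$ is the constant second component of $v_1$. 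It gives $v_{21}'(1)=v_{21}'(0)=-iv_{12}$. Therefore $2=\int_0^1 v_{21}''\,dx=0$, a contradiction, and the chain $v_0,v_1$ is maximal. I will also note that this obstruction does not depend on which eigenvector multiples were added to $v_1$.

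\textbf{Step 3 (the chain of $u_0$).} For $u_1$ the boundary term $i\mu\mathcal{N}u_0$ forces $u_{11}'=-i$ at both endpoints, and the equation forces $u_{11}''=0$. Hence $u_1=-i(x_2-k,0)$ up to eigenvectors. For $u_2$ the contributions $-\mathcal{B}u_1=-\mu(0,1)$ and $\mathcal{A}u_0=\mu(0,1)$ cancel, and $\mathcal{N}u_1=0$. So $u_2$ is constant, and we choose $u_2=(1,0)$. For $u_3$ the first component satisfies $u_{31}''=-2i(x_2-k)$ with $u_{31}'(0)=u_{31}'(1)=0$. Its solvability condition $\int_0^1(x_2-k)\,dx=0$ fixes $k=\tfrac12$, exactly as $k_2$ was fixed in Theorem~\ref{T3}. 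Direct integration then gives $u_{31}=-i(\tfrac13x_2^3-\tfrac12x_2^2)$, which is the vector in (\ref{n47}).

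\textbf{Step 4 (maximality, the main obstacle).} For $u_4$ the first component satisfies $-\mu u_{41}''+2\mu=0$ because of $\mathcal{A}u_2$. The boundary condition contains only $i\mu n_2$ times the constant second component of $u_3$, so $u_{41}'(1)=u_{41}'(0)$, and again $2=0$. The delicate point, both here and in Step 2, is to check that the freedom of adding arbitrary eigenvector combinations to $u_1,u_2,u_3$ cannot cancel this obstruction. I will track these constants explicitly: they only shift the boundary data by equal amounts at $x_2=0$ and $x_2=1$ (because $n_2$ changes sign while the outward-normal factor is symmetric), so they contribute nothing to $u_{41}'(1)-u_{41}'(0)$. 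Together with Step 1 this shows that the two chains listed in (\ref{n47}) are all of them, of lengths $2$ and $4$.
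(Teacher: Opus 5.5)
Steps 1--3 are correct. So is the robustness remark in Step 2: there the interior forcing is $\mathcal{A}v_0=\mu(2,0)$, which involves only the fixed eigenvector. Any admissible $v_1$ is constant, so $\mathcal{B}v_1=0$, and $i\mathcal{N}v_1$ shifts $v_{21}'$ by the same amount at both endpoints.

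Step 4, however, has a genuine gap. Your obstruction $u_{41}''=2$ comes from $\mathcal{A}u_2$ with the particular choice $u_2=(1,0)$. But $u_2$ is determined only modulo $\ker\mathcal{L}_0(0)=\mathrm{span}\{(1,0),(0,1)\}$. Replacing $u_2$ by $u_2+c(1,0)$ leaves the problem for $u_3$ untouched, since $\mathcal{B}$ and $\mathcal{N}$ both annihilate $(c,0)$. It does, however, change the \emph{interior} forcing in the first component of the $u_4$ problem by $2\mu c$. This is not a boundary shift, so your claim that the free constants ``only shift the boundary data'' is false. Concretely, take $u_2=(0,0)$ with $u_1,u_3$ as in (\ref{n47}); $u_2=(0,1)$ with $u_3$ adjusted also works. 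Then the first component becomes $u_{41}''=0$ with $u_{41}'(0)=u_{41}'(1)$, which is solvable. So as written you have only shown that the specific chain (\ref{n47}) does not extend, not that $u_0$ has rank $4$.

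The obstruction that survives all choices sits in the second component of the $u_4$ system, which you never examine: $-2\mu u_{42}''-i\mu u_{31}'+\mu u_{22}=0$ in $(0,1)$, with $u_{42}'(0)=u_{42}'(1)=0$ (the second row of $\mathcal{N}$ vanishes). For an arbitrary admissible chain one finds $u_1=(-i(x_2-\tfrac{1}{2}),c)$, $u_2=(-icx_2+d_1,d)$ and $u_{31}'=-i(x_2^2-x_2)-id$. The $d$-terms cancel, and the condition becomes $2u_{42}''=-(x_2^2-x_2)$ with homogeneous Neumann data. Its solvability would require $\int_0^1(x_2^2-x_2)\,dx=0$, but the integral equals $-\tfrac{1}{6}$. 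This is the obstruction the paper uses in (\ref{n50}), and unlike yours it is independent of $c,d_1,d$. Replacing your Step 4 by this computation, with those constants tracked as above, closes the gap.
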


\textbf{Proof.}\\
  a) Let $v_1$~--- be an associated vector to be eigenvector $v_ 0$,
  then $v_1$ can be defined by the system of equations:
\[
  \mathcal{C}_0(0)v_1+\frac{\partial}{\partial\alpha}\mathcal{C}_0(0)v_0=
\]
\[
  =-\mu \!
  \begin{array}\lgroup{cc}\rgroup
  {\! \! \!}  D^2_2   &    0     {\! \! \!}  \\
  {\! \! \!}     0    &  2D^2_2  {\! \! \!}
  \end{array}{\! \! \! \! \!}
  \begin{array}\lgroup{c}\rgroup
  {\! \! \!}  v_{11}  {\! \! \!} \\
  {\! \! \!}  v_{12}  {\! \! \!}
  \end{array} \!
  -i\mu \!
  \begin{array}\lgroup{cc}\rgroup
  {\! \! \!}   0   &   D_2  {\! \! \!} \\
  {\! \! \!}  D_2  &    0   {\! \! \!}
  \end{array} {\! \! \! \! \!}
  \begin{array}\lgroup{c}\rgroup
  {\! \! \!}  1  {\! \! \!} \\
  {\! \! \!}  0  {\! \! \!}
  \end{array}= -\mu
  \begin{array}\lgroup{c}\rgroup
  {\! \! \!}   D^2_2v_{11}  {\! \! \!} \\
  {\! \! \!}  2D^2_2v_{12}  {\! \! \!}
  \end{array} \!=
  \begin{array}\lgroup{c}\rgroup
  {\! \! \!}  0  {\! \! \!} \\
  {\! \! \!}  0  {\! \! \!}
  \end{array};
\]

\bigskip
\[
  \left.\mathcal{U}(0)v_1+\frac{\partial}{\partial\alpha}\mathcal{U}(0)v_0\right|^1_0=
\]
\smallskip
\[
  =\mu \!
  \begin{array}\lgroup{cc}\rgroup
  {\! \! \!}  n_2D_2   &   0     {\! \! \!}  \\
  {\! \! \!}     0    &  2n_2D_2  {\! \! \!}
  \end{array}{\! \! \! \! \!}
  \begin{array}\lgroup{c}\rgroup
  {\! \! \!}  v_{11}  {\! \! \!} \\
  {\! \! \!}  v_{12}  {\! \! \!}
  \end{array} \!
  +i\mu \!
  \begin{array}\lgroup{cc}\rgroup
  {\! \! \!}   0   &   n_2  {\! \! \!} \\
  {\! \! \!}   0   &    0   {\! \! \!}
  \end{array} {\! \! \! \! \!}
  \begin{array}\lgroup{c}\rgroup
  {\! \! \!}  1  {\! \! \!} \\
  {\! \! \!}  0  {\! \! \!}
  \end{array}= \mu
  \begin{array}\lgroup{c}\rgroup
  {\! \! \!}   n_2D_2v_{11}  {\! \! \!} \\
  {\! \! \!}  2n_2D_2v_{12}  {\! \! \!}
  \end{array}_0 \!=
  \begin{array}\lgroup{c}\rgroup
  {\! \! \!}  0  {\! \! \!} \\
  {\! \! \!}  0  {\! \! \!}
  \end{array}.
\]\\
  From this system of equations, we obtain that the associated vector $v_1$ has the form:
\[
  v_1=(1,0).
\]

  Let $v_2$~--- be the next associated eigenvector, then $v_2$ can be defined
  by the system of equations:
\[
  \mathcal{C}_0(0)v_2+\frac{\partial}{\partial\alpha}\mathcal{C}_0(0)v_1+
  \frac{1}{2}\frac{\partial^2}{\partial\alpha^2}\mathcal{C}_0(0)v_0=
\]
\smallskip
\[
  =-\mu \!
  \begin{array}\lgroup{cc}\rgroup
  {\! \! \!}  D^2_2   &    0     {\! \! \!}  \\
  {\! \! \!}     0    &  2D^2_2  {\! \! \!}
  \end{array}{\! \! \! \! \!}
  \begin{array}\lgroup{c}\rgroup
  {\! \! \!}  v_{21}  {\! \! \!} \\
  {\! \! \!}  v_{22}  {\! \! \!}
  \end{array} \!
  -i\mu \!
  \begin{array}\lgroup{cc}\rgroup
  {\! \! \!}   0   &   D_2  {\! \! \!} \\
  {\! \! \!}  D_2  &    0   {\! \! \!}
  \end{array} {\! \! \! \! \!}
  \begin{array}\lgroup{c}\rgroup
  {\! \! \!}  0  {\! \! \!} \\
  {\! \! \!}  1  {\! \! \!}
  \end{array} \! +\mu
  \begin{array}\lgroup{cc}\rgroup
  {\! \! \!}   2   &   0  {\! \! \!} \\
  {\! \! \!}   0   &   1  {\! \! \!}
  \end{array} {\! \! \! \! \!}
  \begin{array}\lgroup{c}\rgroup
  {\! \! \!}  1  {\! \! \!} \\
  {\! \! \!}  0  {\! \! \!}
  \end{array} =
\]
\smallskip
\[
  =-\mu
  \begin{array}\lgroup{c}\rgroup
  {\! \! \!}   D^2_2v_{21}-2  {\! \! \!} \\
  {\! \! \!}  2D^2_2v_{22}    {\! \! \!}
  \end{array} \!=
  \begin{array}\lgroup{c}\rgroup
  {\! \! \!}  0  {\! \! \!} \\
  {\! \! \!}  0  {\! \! \!}
  \end{array};
\]

\bigskip
\[
  \left.\mathcal{U}(0)v_2+\frac{\partial}{\partial\alpha}\mathcal{U}(0)v_1\right|^1_0=
\]
\smallskip
\[
  =\mu \!
  \begin{array}\lgroup{cc}\rgroup
  {\! \! \!}  n_2D_2   &   0     {\! \! \!}  \\
  {\! \! \!}     0    &  2n_2D_2  {\! \! \!}
  \end{array}{\! \! \! \! \!}
  \begin{array}\lgroup{c}\rgroup
  {\! \! \!}  v_{21}  {\! \! \!} \\
  {\! \! \!}  v_{22}  {\! \! \!}
  \end{array} \!
  +i\mu \!
  \begin{array}\lgroup{cc}\rgroup
  {\! \! \!}   0   &   n_2  {\! \! \!} \\
  {\! \! \!}   0   &    0   {\! \! \!}
  \end{array} {\! \! \! \! \!}
  \begin{array}\lgroup{c}\rgroup
  {\! \! \!}  0  {\! \! \!} \\
  {\! \! \!}  1  {\! \! \!}
  \end{array}= \mu
  \begin{array}\lgroup{c}\rgroup
  {\! \! \!}   n_2D_2v_{21}+in_2  {\! \! \!} \\
  {\! \! \!}   2n_2D_2v_{22}      {\! \! \!}
  \end{array}_0 \!=
  \begin{array}\lgroup{c}\rgroup
  {\! \! \!}  0  {\! \! \!} \\
  {\! \! \!}  0  {\! \! \!}
  \end{array}.
\medskip
\]
  The system of equation have solution $v_2$ if the next boundary problem
  is solvable:
\begin{align} \label{n48}
      \left \{
        \begin{matrix}
          \quad\mu D^2_2v_{21} & = & -2\mu
          \\
          \left.{\mu n_2D_2v_{21}}\right|^1_0 & = & -i\mu n_2.
        \end{matrix}
       \right.
\end{align}
  Therefore, the boundary problem (\ref{n48}) is solvable if and
  only if we have the next equality:
\[
  -2\mu\!\int\limits^1_0\!dx+(i\mu n_2,1)_0=0.
\]
  From the relation (\ref{n33}), we obtain that
\[
  (i\mu n_2,1)_0=\mu(in_2,1)_0=\mu(iD_21,1)-\mu(1,iD_21)=0.
\]
  From here we have
\[
    -2\mu\!\int\limits^1_0\!dx+(i\mu n_2,1)_0=-2\mu\!\int\limits^1_0\!dx=-2\mu<0.
\]
  For this inequality the boundary problem (\ref{n48})
  and at the same time the system of equations for the
  vector are not solvable. Consequently, the chain of the vectors:
\[
  v_0=(1,0) \;\: \emph{and} \;\; v_1=(0,1)\text{~---} \, \emph{\,is maximal}\,.
\]
\\
\bigskip
  b) Let $u_1$~--- be an associated vector to be eigenvector $u_ 0$,
  then $u_1$ can be defined by the system of equations:
\[
  \mathcal{C}_0(0)u_1+\frac{\partial}{\partial\alpha}\mathcal{C}_0(0)u_0=
\]
\[
  =-\mu \!
  \begin{array}\lgroup{cc}\rgroup
  {\! \! \!}  D^2_2   &    0     {\! \! \!}  \\
  {\! \! \!}     0    &  2D^2_2  {\! \! \!}
  \end{array}{\! \! \! \! \!}
  \begin{array}\lgroup{c}\rgroup
  {\! \! \!}  u_{11}  {\! \! \!} \\
  {\! \! \!}  u_{12}  {\! \! \!}
  \end{array} \!
  -i\mu \!
  \begin{array}\lgroup{cc}\rgroup
  {\! \! \!}   0   &   D_2  {\! \! \!} \\
  {\! \! \!}  D_2  &    0   {\! \! \!}
  \end{array} {\! \! \! \! \!}
  \begin{array}\lgroup{c}\rgroup
  {\! \! \!}  0  {\! \! \!} \\
  {\! \! \!}  1  {\! \! \!}
  \end{array}= -\mu
  \begin{array}\lgroup{c}\rgroup
  {\! \! \!}   D^2_2u_{11}  {\! \! \!} \\
  {\! \! \!}  2D^2_2u_{12}  {\! \! \!}
  \end{array} \!=
  \begin{array}\lgroup{c}\rgroup
  {\! \! \!}  0  {\! \! \!} \\
  {\! \! \!}  0  {\! \! \!}
  \end{array};
\]

\bigskip
\[
  \left.\mathcal{U}(0)u_1+\frac{\partial}{\partial\alpha}\mathcal{U}(0)u_0\right|^1_0=
\]
\smallskip
\[
  =\mu \!
  \begin{array}\lgroup{cc}\rgroup
  {\! \! \!}  n_2D_2   &   0     {\! \! \!}  \\
  {\! \! \!}     0    &  2n_2D_2  {\! \! \!}
  \end{array}{\! \! \! \! \!}
  \begin{array}\lgroup{c}\rgroup
  {\! \! \!}  u_{11}  {\! \! \!} \\
  {\! \! \!}  u_{12}  {\! \! \!}
  \end{array} \!
  +i\mu \!
  \begin{array}\lgroup{cc}\rgroup
  {\! \! \!}   0   &   n_2  {\! \! \!} \\
  {\! \! \!}   0   &    0   {\! \! \!}
  \end{array} {\! \! \! \! \!}
  \begin{array}\lgroup{c}\rgroup
  {\! \! \!}  0  {\! \! \!} \\
  {\! \! \!}  1  {\! \! \!}
  \end{array}= \mu
  \begin{array}\lgroup{c}\rgroup
  {\! \! \!}   n_2D_2u_{11} +in_2 {\! \! \!} \\
  {\! \! \!}  2n_2D_2u_{12}  {\! \! \!}
  \end{array}_0 \!=
  \begin{array}\lgroup{c}\rgroup
  {\! \! \!}  0  {\! \! \!} \\
  {\! \! \!}  0  {\! \! \!}
  \end{array}.
\]\\
  From this system of equations, we obtain that the associated vector $u_1$
  has the form:
\[
  \begin{matrix}
  u_1=-i(x_2-\frac{1}{2},0).
  \end{matrix}
\]

  Let $u_2$~--- be the next associated eigenvector, then $u_2$ can be defined
  by the system of equations:
\[
  \mathcal{C}_0(0)u_2+\frac{\partial}{\partial\alpha}\mathcal{C}_0(0)u_1+
  \frac{1}{2}\frac{\partial^2}{\partial\alpha^2}\mathcal{C}_0(0)u_0=
\]
\[
  =-\mu \!
  \begin{array}\lgroup{cc}\rgroup
  {\! \! \!}  D^2_2   &    0     {\! \! \!}  \\
  {\! \! \!}     0    &  2D^2_2  {\! \! \!}
  \end{array}{\! \! \! \! \!}
  \begin{array}\lgroup{c}\rgroup
  {\! \! \!}  u_{21}  {\! \! \!} \\
  {\! \! \!}  u_{22}  {\! \! \!}
  \end{array} \!
  -i\mu \!
  \begin{array}\lgroup{cc}\rgroup
  {\! \! \!}   0   &   D_2  {\! \! \!} \\
  {\! \! \!}  D_2  &    0   {\! \! \!}
  \end{array} {\! \! \! \! \!}
  \begin{array}\lgroup{c}\rgroup
  {\! \! \! \!}  -i(x_2-\frac{1}{2})  {\! \! \!} \\
  {\! \! \!}            0  {\! \! \!}
  \end{array} \! +\mu
  \begin{array}\lgroup{cc}\rgroup
  {\! \! \!}   2   &   0  {\! \! \!} \\
  {\! \! \!}   0   &   1  {\! \! \!}
  \end{array} {\! \! \! \! \!}
  \begin{array}\lgroup{c}\rgroup
  {\! \! \!}  0  {\! \! \!} \\
  {\! \! \!}  1  {\! \! \!}
  \end{array} =
\]
\smallskip
\[
  =-\mu
  \begin{array}\lgroup{c}\rgroup
  {\! \! \!}   D^2_2u_{21}  {\! \! \!} \\
  {\! \! \!}  2D^2_2u_{22}    {\! \! \!}
  \end{array} \!=
  \begin{array}\lgroup{c}\rgroup
  {\! \! \!}  0  {\! \! \!} \\
  {\! \! \!}  0  {\! \! \!}
  \end{array};
\]

\bigskip
\[
  \left.\mathcal{U}(0)u_2+\frac{\partial}{\partial\alpha}\mathcal{U}(0)u_1\right|^1_0=
\]
\smallskip
\[
  =\mu \!
  \begin{array}\lgroup{cc}\rgroup
  {\! \! \!}  n_2D_2   &   0     {\! \! \!}  \\
  {\! \! \!}     0    &  2n_2D_2  {\! \! \!}
  \end{array}{\! \! \! \! \!}
  \begin{array}\lgroup{c}\rgroup
  {\! \! \!}  u_{21}  {\! \! \!} \\
  {\! \! \!}  u_{22}  {\! \! \!}
  \end{array} \!
  +i\mu \!
  \begin{array}\lgroup{cc}\rgroup
  {\! \! \!}   0   &   n_2  {\! \! \!} \\
  {\! \! \!}   0   &    0   {\! \! \!}
  \end{array} {\! \! \! \! \!}
  \begin{array}\lgroup{c}\rgroup
  {\! \! \! \!}  -i(x_2-\frac{1}{2})  {\! \! \!} \\
  {\! \! \!}             0           {\! \! \!}
  \end{array}= \mu
  \begin{array}\lgroup{c}\rgroup
  {\! \! \!}   n_2D_2u_{21}  {\! \! \!} \\
  {\! \! \!}   2n_2D_2u_{22}      {\! \! \!}
  \end{array}_0 \!=
  \begin{array}\lgroup{c}\rgroup
  {\! \! \!}  0  {\! \! \!} \\
  {\! \! \!}  0  {\! \! \!}
  \end{array}.
\]\\
  From this system of equations, we obtain that the associated vector
  $u_2$ has the form:
\[
  u_2=(1,0).
\]

  Let $u_3$~--- be the next associated eigenvector, then $u_3$ can be defined
  by the system of equations:
\[
  \mathcal{C}_0(0)u_3+\frac{\partial}{\partial\alpha}\mathcal{C}_0(0)u_2+
  \frac{1}{2}\frac{\partial^2}{\partial\alpha^2}\mathcal{C}_0(0)u_1=
\]
\[
  =-\mu \!
  \begin{array}\lgroup{cc}\rgroup
  {\! \! \!}  D^2_2   &    0     {\! \! \!}  \\
  {\! \! \!}     0    &  2D^2_2  {\! \! \!}
  \end{array}{\! \! \! \! \!}
  \begin{array}\lgroup{c}\rgroup
  {\! \! \!}  u_{31}  {\! \! \!} \\
  {\! \! \!}  u_{32}  {\! \! \!}
  \end{array} \!
  -i\mu \!
  \begin{array}\lgroup{cc}\rgroup
  {\! \! \!}   0    &   D_2  {\! \! \!} \\
  {\! \! \!}  D_2  &    0   {\! \! \!}
  \end{array} {\! \! \! \! \!}
  \begin{array}\lgroup{c}\rgroup
  {\! \! \!}  1  {\! \! \!} \\
  {\! \! \!}  0  {\! \! \!}
  \end{array} \! +\mu
  \begin{array}\lgroup{cc}\rgroup
  {\! \! \!}   2   &   0  {\! \! \!} \\
  {\! \! \!}   0   &   1  {\! \! \!}
  \end{array} {\! \! \! \! \!}
  \begin{array}\lgroup{c}\rgroup
  {\! \! \! \!}  -i(x_2-\frac{1}{2})  {\! \! \!} \\
  {\! \! \!}           0  {\! \! \!}
  \end{array} =
\]
\smallskip
\[
  =-\mu
  \begin{array}\lgroup{c}\rgroup
  {\! \! \!}   D^2_2u_{31}+2i(x_2-\frac{1}{2})  {\! \! \!} \\
  {\! \! \!}         2D^2_2u_{32}    {\! \! \!}
  \end{array} \!=
  \begin{array}\lgroup{c}\rgroup
  {\! \! \!}  0  {\! \! \!} \\
  {\! \! \!}  0  {\! \! \!}
  \end{array};
\]

\bigskip
\[
  \left.\mathcal{U}(0)u_3+\frac{\partial}{\partial\alpha}\mathcal{U}(0)u_2\right|^1_0=
\]
\smallskip
\[
  =\mu \!
  \begin{array}\lgroup{cc}\rgroup
  {\! \! \!}  n_2D_2   &   0     {\! \! \!}  \\
  {\! \! \!}     0    &  2n_2D_2  {\! \! \!}
  \end{array}{\! \! \! \! \!}
  \begin{array}\lgroup{c}\rgroup
  {\! \! \!}  u_{31}  {\! \! \!} \\
  {\! \! \!}  u_{32}  {\! \! \!}
  \end{array} \!
  +i\mu \!
  \begin{array}\lgroup{cc}\rgroup
  {\! \! \!}   0   &   n_2  {\! \! \!} \\
  {\! \! \!}   0   &    0   {\! \! \!}
  \end{array} {\! \! \! \! \!}
  \begin{array}\lgroup{c}\rgroup
  {\! \! \!}  1  {\! \! \!} \\
  {\! \! \!}  0  {\! \! \!}
  \end{array}= \mu
  \begin{array}\lgroup{c}\rgroup
  {\! \! \!}   n_2D_2u_{31}  {\! \! \!} \\
  {\! \! \!}   2n_2D_2u_{32}      {\! \! \!}
  \end{array}_0 \!=
  \begin{array}\lgroup{c}\rgroup
  {\! \! \!}  0  {\! \! \!} \\
  {\! \! \!}  0  {\! \! \!}
  \end{array}.
\]\\
  From this system of equations, we examine the next boundary problem:
\begin{align} \label{n49}
      \left \{
        \begin{matrix}
          \quad\mu D^2_2u_{31} & = & -2i\mu(x_2-\frac{1}{2})
          \\
          \left.{\mu n_2D_2u_{31}}\right|^1_0 & = & 0 .
        \end{matrix}
       \right.
\end{align}
  The boundary problem (\ref{n49}) has solution if we have
  the next equality is true:
\[
  -2i\mu\!\int\limits^1_0\begin{matrix}(x_2-\frac{1}{2})\,dx=0.\end{matrix}
\]
  Computing this relation we find
\[
  -2i\mu\!\int \limits^1_0\left.\begin{matrix}(x_2-\frac{1}{2})\,dx=
  -2i\mu(\frac{1}{2}x^2_2-\frac{1}{2}x_2)\end{matrix}\right|^1_0=0.
\]
  From here the boundary problem (\ref{n49}) is solvable and the
  vector $u_3$ has the form:
\[
  \begin{matrix}
    u_3=-i(\frac{1}{3}x^3_2-\frac{1}{2}x^2_2,0).
  \end{matrix}
\]

\bigskip
  Let $u_4$~--- be the next associated eigenvector, then $u_4$ can be defined
  by the system of equations:
\[
  \mathcal{C}_0(0)u_4+\frac{\partial}{\partial\alpha}\mathcal{C}_0(0)u_3+
  \frac{1}{2}\frac{\partial^2}{\partial\alpha^2}\mathcal{C}_0(0)u_2=
\smallskip
\]
\[
  =-\mu \!
  \begin{array}\lgroup{cc}\rgroup
  {\! \! \!}  D^2_2   &    0     {\! \! \!}  \\
  {\! \! \!}     0    &  2D^2_2  {\! \! \!}
  \end{array}{\! \! \! \! \!}
  \begin{array}\lgroup{c}\rgroup
  {\! \! \!}  u_{41}  {\! \! \!} \\
  {\! \! \!}  u_{42}  {\! \! \!}
  \end{array} \!
  -i\mu \!
  \begin{array}\lgroup{cc}\rgroup
  {\! \! \!}   0    &   D_2  {\! \! \!} \\
  {\! \! \!}  D_2  &    0   {\! \! \!}
  \end{array} {\! \! \! \! \!}
  \begin{array}\lgroup{c}\rgroup
  {\! \! \! \!} -i(\frac{1}{3}x^3_2-\frac{1}{2}x^2_2)  {\! \! \!} \\
  {\! \! \!}                        0                   {\! \! \!}
  \end{array} \! +\mu
  \begin{array}\lgroup{cc}\rgroup
  {\! \! \!}   2   &   0  {\! \! \!} \\
  {\! \! \!}   0   &   1  {\! \! \!}
  \end{array} {\! \! \! \! \!}
  \begin{array}\lgroup{c}\rgroup
  {\! \! \!}  1  {\! \! \!} \\
  {\! \! \!}  0  {\! \! \!}
  \end{array} =
\]
\smallskip
\[
  =-\mu
  \begin{array}\lgroup{c}\rgroup
  {\! \! \!}   D^2_2u_{41}-2  {\! \! \!} \\
  {\! \! \! \!}   2D^2_2u_{42}+x^2_2-x_2  {\! \! \! \!}
  \end{array} \!=
  \begin{array}\lgroup{c}\rgroup
  {\! \! \!}  0  {\! \! \!} \\
  {\! \! \!}  0  {\! \! \!}
  \end{array};
\]

\bigskip
\[
  \left.\mathcal{U}(0)u_4+\frac{\partial}{\partial\alpha}\mathcal{U}(0)u_3\right|^1_0=
\]
\smallskip
\[
  =\mu \!
  \begin{array}\lgroup{cc}\rgroup
  {\! \! \!}  n_2D_2   &   0     {\! \! \!}  \\
  {\! \! \!}     0    &  2n_2D_2  {\! \! \!}
  \end{array}{\! \! \! \! \!}
  \begin{array}\lgroup{c}\rgroup
  {\! \! \!}  u_{41}  {\! \! \!} \\
  {\! \! \!}  u_{42}  {\! \! \!}
  \end{array} \!
  +i\mu \!
  \begin{array}\lgroup{cc}\rgroup
  {\! \! \!}   0   &   n_2  {\! \! \!} \\
  {\! \! \!}   0   &    0   {\! \! \!}
  \end{array} {\! \! \! \! \!}
  \begin{array}\lgroup{c}\rgroup
  {\! \! \! \!} -i(\frac{1}{3}x^3_2-\frac{1}{2}x^2_2)   {\! \! \!} \\
  {\! \! \!}                     0                      {\! \! \!}
  \end{array}= \mu
  \begin{array}\lgroup{c}\rgroup
  {\! \! \!}   n_2D_2u_{41}  {\! \! \!} \\
  {\! \! \!}   2n_2D_2u_{42}      {\! \! \!}
  \end{array}_0 \!=
  \begin{array}\lgroup{c}\rgroup
  {\! \! \!}  0  {\! \! \!} \\
  {\! \! \!}  0  {\! \! \!}
  \end{array}.
\]\\
  From this system of equations, we examine the next boundary problem:
\begin{align} \label{n50}
      \left \{
        \begin{matrix}
          \quad 2\mu D^2_2u_{42} & = & -\mu(x^2_2-x_2)
          \\
          \left.{2\mu n_2D_2u_{42}}\right|^1_0 & = & 0 .
        \end{matrix}
       \right.
\end{align}
  The boundary problem (\ref{n50}) is solvable if we have the equality:
\[
  \mu\!\int\limits^1_0(x^2_2-x_2)\,dx=0.
\]
  We examine the value of the integral
\[
  \mu\!\int \limits^1_0\left.\begin{matrix}(x^2_2-x_2)\,dx=
  \mu(\frac{1}{3}x^3_2-\frac{1}{2}x^2_2)\end{matrix}\right|^1_0=-\frac{\mu}{6}<0.
\]
  For this inequality the boundary problem (\ref{n50})
  and at the same time the system of equations for the
  vector are not solvable. Consequently, the chain of the vectors:
\[
  \begin{matrix}
    u_1=(0,1), \, u_2=-i(x_2-\frac{1}{2}), \, u_2=(1,0) \; \, \emph{and} \;
    \; u_3=-i(\frac{1}{3}x^3_2-\frac{1}{2}x^2_2,0) \text{~---} \,
    \emph{\,is maximal}\,.
  \end{matrix}
\]

  The theorem is proved.

\bigskip
  From theorems~(\ref{T2}) and~(\ref{T6}) For the isotropic structure
  in the plane, when the Lame constants are $\mu>0$ and $\lambda=0$,
  and for the static case, i.e., when $\omega=0$, we obtain that if
  to all the eigen-and associated vector of the spectral problem
  $\mathcal{L}_0(\alpha)$, corresponding to the eigenvalues $\alpha_n$
  from upper semi-plane, adjoins the vectors $v_0, \: u_0  \; \text{and} \; \, u_1$
  from (\ref{n47}), then we obtain a complete and minimal system in
  the space $W^1_2[\,0,1] \; (\text{and in} \: L_2[\,0,1])$.\\

  Now we examine the second case for the isotropic structure,
  when the Lame constants are $\mu>0$ and $\lambda=\infty$, see \cite{SIS83,ALP,WHM40}.
  Then the Poisson's ratio has the value $\nu=\frac{1}{2}$, in this
  case the cubical compression (dilatation) is:
\[
  \varepsilon_{\text{vol}}=\text{div}\,u=\frac{1-2\nu}{E}\Theta,
\]
  where $\Theta=\sigma_x+\sigma_y+\sigma_z$ is the bulk stress,
  $\sigma_x, \, \sigma_y, \, \sigma_z$ are principal stresses and $E$
  is the Young's modulus (modulus of elasticity).
  At the same time we have
\[
  \varepsilon_{\text{vol}}=\text{div}\,u=e_{11}+e_{22}+e_{33}=0.
\]
  These formulas show
  that for the isotropic structure, when the Lame constants are
  $\mu>0$ and $\lambda=\infty$, the free energy $W$ of the system
  has the form:
\[
  W=\mu(e^2_{11}+e^2_{22}+e^2_{33}+2e^2_{12}+2e^2_{13}+2e^2_{23}).
\medskip
\]
  Therefore, with the corresponding changes, all the previous results
  are preserved for this case.\\

\bigskip

  In conclusion the author expresses his sincere thanks to A. G.
  Kostyuchenko for valuable remarks, I also thank the UACM for the support provided during the sabbatical period.\\

\end{document}